\documentclass[12pt,letterpaper]{amsart}
\usepackage{amsmath,txfonts}
\usepackage{amssymb}
\usepackage{amsxtra}
\usepackage{amsthm, color}
\usepackage{txfonts}
\usepackage{graphicx}
\usepackage{times}
\usepackage{citeref}
\usepackage{tikz}
\usepackage{hyperref}
\usepackage{stmaryrd}
\usepackage[T3,T1]{fontenc}
\usepackage{pgfplots}

\usetikzlibrary{calc}

\usepackage{mathrsfs}
\usepackage{amsfonts}
\usepackage{amssymb}
\usepackage{ifthen}
\usepackage{graphicx}
\nonstopmode \numberwithin{equation}{section}

\newtheorem{thm}{Theorem}[section]
\newtheorem{lem}{Lemma}[section]
\newtheorem{cor}[thm]{Corollary}
\newtheorem{prop}[thm]{Proposition}

\newtheorem{step}{Step}[section]

\theoremstyle{definition}
\newtheorem{mlem}{Main lemma}[section]
\newtheorem{assertion}{Assertion}[section]
\newtheorem{cl}{Claim}[section]
\newtheorem{ca}{Case}[section]
\newtheorem{sca}{Subcase}[section]
\newtheorem{scl}{Subclaim}[section]
\newtheorem{conj}[thm]{Conjecture}
\newtheorem{fact}{Fact}[section]
\newtheorem{defn}[thm]{Definition}
\newtheorem{op}[thm]{Open Problem}
\newtheorem{prob}{Problem}[section]
\newtheorem{ques}{Question}[section]
\newtheorem{rem}[thm]{Remark}
\newtheorem{exam}[thm]{Example}

\numberwithin{equation}{section}

\newcounter {own}
\def\theown {\thesection       .\arabic{own}}

\newenvironment{pf}[1][]{%
 \vskip 3mm
 \noindent
 \ifthenelse{\equal{#1}{}}%
  {{\slshape Proof. }}%
  {{\slshape #1.} }%
 }%
{\qed\bigskip}

\newcounter{alphabet}
\renewcommand{\thealphabet}{\Alph{alphabet}}

\newenvironment{Thm}[1][]{\refstepcounter{alphabet}%
	\bigskip
	\noindent
	{\bf Theorem \thealphabet}%
	\ifthenelse{\equal{#1}{}}{}{ (#1)}%
	{\bf .} \itshape
}{\vskip 8pt}

\newenvironment{Lem}[1][]{\refstepcounter{alphabet}%
	\bigskip
	\noindent
	{\bf Lemma \thealphabet}%
	{\bf .} \itshape
}{\vskip 8pt}

\newenvironment{Core}[1][]{\refstepcounter{alphabet}%
	\bigskip
	\noindent
	{\bf Corollary \thealphabet}%
	{\bf .} \itshape
}{\vskip 8pt}

\newcommand{\T}{{\mathcal T}}

\def\be{\begin{equation}}
\def\ee{\end{equation}}

\newcommand{\ben}{\begin{enumerate}}
\newcommand{\een}{\end{enumerate}}

\newcommand{\blem}{\begin{lem}}
\newcommand{\elem}{\end{lem}}
\newcommand{\bthm}{\begin{thm}}
\newcommand{\ethm}{\end{thm}}
\newcommand{\bcor}{\begin{cor}}
\newcommand{\ecor}{\end{cor}}
\newcommand{\beg}{\begin{exam}}
\newcommand{\eeg}{\end{exam}}
\newcommand{\begs}{\begin{examples}}
\newcommand{\eegs}{\end{examples}}
\newcommand{\bdefe}{\begin{defn}}
\newcommand{\edefe}{\end{defn}}
\newcommand{\bprob}{\begin{prob}}
\newcommand{\eprob}{\end{prob}}
\newcommand{\bques}{\begin{ques}}
\newcommand{\eques}{\end{ques}}
\newcommand{\bei}{\begin{itemize}}
\newcommand{\eei}{\end{itemize}}
\newcommand{\bcon}{\begin{conj}}
\newcommand{\econ}{\end{conj}}
\newcommand{\bop}{\begin{op}}
\newcommand{\eop}{\end{op}}

\newcommand{\bas}{\begin{assertion}}
\newcommand{\eas}{\end{assertion}}

\newcommand{\bfa}{\begin{fact}}
\newcommand{\efa}{\end{fact}}

\newcommand{\bca}{\begin{ca}}
\newcommand{\eca}{\end{ca}}

\newcommand{\bst}{\begin{step}}
\newcommand{\est}{\end{step}}

\newcommand{\bsca}{\begin{sca}}
\newcommand{\esca}{\end{sca}}

\newcommand{\bcl}{\begin{cl}}
\newcommand{\ecl}{\end{cl}}

\newcommand{\bmlem}{\begin{mlem}}
\newcommand{\emlem}{\end{mlem}}

\newcommand{\bscl}{\begin{scl}}
\newcommand{\escl}{\end{scl}}

\newcommand{\bcons}{\begin{conjs}}
\newcommand{\econs}{\end{conjs}}

\newcommand{\bprop}{\begin{prop}}
\newcommand{\eprop}{\end{prop}}

\newcommand{\br}{\begin{rem}}
\newcommand{\er}{\end{rem}}
\newcommand{\brs}{\begin{rems}}
\newcommand{\ers}{\end{rems}}
\newcommand{\bo}{\begin{obser}}
\newcommand{\eo}{\end{obser}}
\newcommand{\bos}{\begin{obsers}}
\newcommand{\eos}{\end{obsers}}
\newcommand{\bpf}{\begin{pf}}
\newcommand{\epf}{\end{pf}}
\newcommand{\ba}{\begin{array}}
\newcommand{\ea}{\end{array}}
\newcommand{\beq}{\begin{eqnarray}}
\newcommand{\beqq}{\begin{eqnarray*}}
\newcommand{\eeq}{\end{eqnarray}}
\newcommand{\eeqq}{\end{eqnarray*}}

\newcounter{minutes}
\divide\time by 60
\newcounter{hours}
\multiply\time by 60 \addtocounter{minutes}{-\time}

\begin{document}

\def\thefootnote{}
\footnotetext{ \texttt{\tiny File:~\jobname .tex,
           printed: \number\year-\number\month-\number\day,
           \thehours.\ifnum\theminutes<10{0}\fi\theminutes}
} \makeatletter\def\thefootnote{\@arabic\c@footnote}\makeatother

\bibliographystyle{amsplain}
\title []
{The Hardy-Littlewood-type equivalence problem and its applications}

\author{Jiaolong Chen}
	\address{J. L. Chen, Key Laboratory of Computing and Stochastic Mathematics (Ministry of Education),
 School of Mathematics and Statistics, Hunan Normal University, Changsha, Hunan 410081, P. R. China}
	\email{jiaolongchen@sina.com}

\author{Shaolin Chen}
\address{S. L. Chen,    Center for Applied Mathematics of Guangxi, College of Mathematics and Statistics, Guangxi Normal University,
Guilin, Guangxi 541004, People's Republic of China} \email{slchen@gxnu.edu.cn}

\author{Hidetaka Hamada}
\address{H. Hamada, Industry-Academia Co-innovation and Research Promotion Headquarters,
Kyushu Sangyo University,
	3-1 Matsukadai 2-Chome, Higashi-ku, Fukuoka 813-8503, Japan.}
\email{hi.hamada01@gmail.com}

\author{Qianyun Li}
	\address{Q. Y. Li, Key Laboratory of Computing and Stochastic Mathematics (Ministry of Education), School of Mathematics and Statistics, 
Hunan Normal University, Changsha, Hunan 410081, P. R. China}
	\email{liqianyun@hunnu.edu.cn}


\subjclass[2020]{Primary  31A05, 42A16.}
\keywords{Complex-valued harmonic function, Lipschitz type space, Majorant.}

\begin{abstract}
The main purpose of this paper is to resolve the Hardy-Littlewood-type equivalence problem. In particular, we show that a Hardy-Littlewood-type theorem holds for arbitrary majorants in the case \(p\in(1,\infty)\), and that the equivalence holds in the case \(p=1\). For the proof, we employ new functional analytic approaches based on Fourier series, the Hilbert transform, and the uniform boundedness principle. Those proof methods are completely different from those of Dyakonov [Acta Math., 178 (1997), 143-167]  and Pavlovi\'c [Acta Math., 183 (1999), 141-143].
In addition, we present several representative applications that improve 
 a series of known results.
\end{abstract}

\maketitle \pagestyle{myheadings} \markboth{J. L. Chen, S. L. Chen,   H. Hamada and Q. Y. Li}{The Hardy-Littlewood-type equivalence problem and its applications}

\dedicatory{}


\section{Introduction and main results}\label{sec-1}

Let $\mathbb{D}$ denote the open unit disk in the complex plane $\mathbb{C}$, 
and let $\mathbb{T} := \partial \mathbb{D}$ be its boundary, the unit circle.
For a subset $\Omega\subset\mathbb{C}$,
we use $\mathcal{C}^n(\Omega)$ to denote the set of
all $n$-times continuously differentiable complex-valued functions  in $\Omega$,
where $n\in \mathbb{N}_{0}=\{0\}\cup \mathbb{N}$, and $\mathbb{N}$ denotes the set of all positive integers.
In particular, let $\mathcal{C}(\Omega):=\mathcal{C}^{0}(\Omega)$.

For $z = x + iy \in \mathbb{C}$, the formal complex derivatives are defined by
$$
\frac{\partial}{\partial z}
:= \frac{1}{2} \left( \frac{\partial}{\partial x} - i \frac{\partial}{\partial y} \right) 
\quad\text{and}\quad
\frac{\partial}{\partial \bar z}
:= \frac{1}{2} \left( \frac{\partial}{\partial x} + i \frac{\partial}{\partial y} \right).
$$

Given a  subdomain $\Omega\subset\mathbb{C}$ and a function $f\in \mathcal{C}^{1}(\Omega)$,
 the directional derivative of $f$ at the point $z \in \Omega$ in the direction $ \eta\in[0,2\pi) $ is defined by
$$
\partial_\eta f(z)
:= \lim_{r \to 0^+} \frac{f(z + r e^{i\eta}) - f(z)}{r}
= e^{i\eta} f_z(z) + e^{-i\eta} f_{\bar z}(z),
$$
where $f_z := \partial f / \partial z$ and $f_{\bar z} := \partial f / \partial \bar z$.
We set
$$
\mathscr{M}_{f}(z) := \max_{ \eta\in[0,2\pi)} |\partial_\eta f(z)|
= |f_z(z)| + |f_{\bar z}(z)|.
$$

 A twice continuously differentiable complex-valued function $f = u + iv$,
 where $u$ and $v$ are real-valued functions, 
 is called a \emph{complex-valued harmonic function} in the domain $\Omega \subset \mathbb{C}$ if both its real and imaginary parts satisfy
$$
\Delta u(z) = \Delta v(z)  = 0,~z=x+iy\in\Omega,
$$
 where $\Delta$ denotes the Laplacian operator
$$
\Delta := 4 \frac{\partial^2}{\partial z \, \partial \bar z}=\frac{\partial^{2}}{\partial x^{2}}+\frac{\partial^{2}}{\partial y^{2}}.
$$
By Lewy's theorem (see \cite[Theorem 1]{Lewy} or \cite[Section 2.2]{Duren2004}), we know that a complex-valued harmonic function $f$ 
is locally univalent and sense-preserving in $\Omega$ if and only if its Jacobian determinant
$$
J_f := |f_{z}|^2 - |f_{\overline{z}}|^2
$$
is strictly positive throughout $\Omega$. Equivalently, the second complex dilatation $\omega_f$ of $f$ satisfies
$$
|\omega_f|= \left| \frac{f_{\overline{z}}}{f_{z}} \right|< 1
$$
in $\Omega$.
Moreover, if $\Omega$ is simply connected, then $f$ admits a decomposition $f = h + \overline{g}$, where $h$ and $g$ are analytic in $\Omega$; this
representation is unique up to an additive constant (see \cite{Duren2004}).


For $\varphi\in\mathcal{C}(\mathbb{T})$, it is well known that if $f$  
satisfies the Dirichlet conditions:  (1)  $\Delta f=0$   in $ \mathbb{D} $, 
(2) $f\in \mathcal{C}(\overline{\mathbb{D}})$
and  (3)  $f\mid_{\mathbb{T}}=\varphi$, 
then  
$$
f(z)=\mathcal{P}[\varphi](z),~z\in \overline{\mathbb{D}},
$$ 
 where $\overline{\mathbb{D}}=\mathbb{D} \cup \mathbb{T}$
 (see  \cite[p. 13 and 15]{ABR92}). 
 Here and hereafter,
$\mathcal{P}[\varphi]$ stands for the  {\it Poisson integral} of $\varphi$ in $\overline{\mathbb{D}}$, i.e.,
\begin{equation*} 
\mathcal{P}[\varphi](z) =\begin{cases}
\displaystyle \frac{1}{2\pi} \int_{0}^{2\pi}\mathcal{P}(z,e^{i\eta})\varphi(e^{i\eta}) d\eta, \;\;  &\text{for}  \;z\in \mathbb{D}, \\
\displaystyle \varphi(z),\;  &\text{for}  \; z\in \mathbb{T} ,
\end{cases}
\end{equation*}
where 
$$\mathcal{P}(z,e^{i\eta})=\frac{1-|z|^2}{|z-e^{i\eta}|^2} $$
is  the  {\it Poisson kernel}  with respect  to $\Delta$, satisfying
\begin{equation}\label{ceq-1.5}
 \frac{1}{2\pi}\int_{0}^{2\pi}\mathcal{P}(z,e^{i\eta})d \eta =1. 
 \end{equation}

We denote by $\mathscr{H}(\mathbb{D})$ the class of all complex-valued harmonic functions from  $\mathbb{D}$ into  $\mathbb{C}$.
Moreover, we use $\mathscr{A}(\mathbb{D})$ to denote the set of all analytic functions of $\mathbb{D}$ into $\mathbb{C}$.

For convenience, we make a notational convention.
Throughout this paper,
 we use the symbol $C$ to denote various positive
constants, whose values may change from one occurrence to another.
Also we denote by $C=C(a_{1},a_{2},\ldots)$ a constant that depends only on the given parameters $a_{1}$, $a_{2}$, $\ldots$,
and whose value may vary from one occurrence to another.

\subsection*{The Hardy-Littlewood-type equivalence problem}
 A continuous increasing function $$\omega:[0,\infty)\mapsto[0,\infty)$$ with $\omega(0)=0$ is called a majorant if
$\omega(t)/t$ is non-increasing for $t\in(0,\infty)$ (see \cite{Dy1,Dy2,P}). For $\delta_{0}>0$ and $0<\delta<\delta_{0}$, we consider the
following conditions on a majorant $\omega$:
\be\label{eq2x}
\int_{0}^{\delta}\frac{\omega(t)}{t}\,dt\leq\, C\omega(\delta)
\ee
and
\be\label{eq3x}
\delta\int_{\delta}^{\infty}\frac{\omega(t)}{t^{2}}\,dt\leq\, C
\omega(\delta),
\ee
where $C$ denotes a positive constant. A majorant $\omega$ is henceforth said to be fast (resp. slow) if condition (\ref{eq2x}) (resp. (\ref{eq3x}) ) is fulfilled.
In particular, a majorant $\omega$ is said to be  regular if it satisfies the
conditions (\ref{eq2x}) and (\ref{eq3x}) (see \cite{Dy1,Dy2}).

Given a majorant $\omega$ and a subset  $\Omega$ of
$\mathbb{C}$, a function $f$ of  $\Omega$ into $\mathbb{C}$ is
said to belong to the  {\it Lipschitz  space}
$\Lambda_{\omega }(\Omega)$
 if there is a positive constant $C$ such that, for $z_{1},z_{2}\in\Omega$,
 \be\label{rrt-1}
 |f(z_{1})-f(z_{2})| \leq\,C\omega\left(|z_{1}-z_{2}|\right).
\ee
The Lipschitz constant of $f\in \Lambda_{\omega}(\Omega)$ is defined as follows: 
$$
\|f\|_{\Lambda_{\omega}(\Omega),s} 
:= \sup_{\substack{z_1,z_2\in\Omega\\ z_1\neq z_2}} \frac{|f(z_1) - f(z_2)|}{\omega(|z_1 - z_2|)}  .
$$
 Note that if $\Omega$ is a proper subdomain of $\mathbb{C}$ and 
$f\in \Lambda_{\omega }(\Omega)$, then $f$ is continuous on
$\overline{\Omega}$ and (\ref{rrt-1}) holds for $z,w \in \overline{\Omega}$  
(see \cite{Dy1} or \cite[Exercise  4.13]{Rudin1976}).
Furthermore,  for  a Borel function $f:\Omega\rightarrow\mathbb{C}$ with
   $\Omega=\mathbb{D}$,   $ \mathbb{T}$ or   $ \overline{\mathbb{D}}$,
     $f$ is said to belong to the {\it Lipschitz type space} $\Lambda_{\omega,p}(\Omega)$ ($p\in(0,\infty]$)
  if there exists a positive constant $C$ such that, for all  $z_1,z_2 \in \Omega$,
  $$
\mathscr{L}_p[f](z_1,z_2) \leqslant C\omega(|z_1 - z_2|),
 $$ 
where
\[
\mathscr{L}_p[f](z_1,z_2) =
\begin{cases}
	\displaystyle \biggl( \int_{0}^{2\pi} \bigl|f(e^{i\eta} z_1) - f(e^{i\eta} z_2)\bigr|^p d\eta \biggr)^{\frac{1}{p}}, & \text{if } p \in (0,\infty),\\[6pt]
	\bigl|f(z_1) - f(z_2)\bigr|, & \text{if } p = \infty 
\end{cases}
\]
(see \cite{P-08,Pav-2019}).
 Also, for  $f\in\Lambda_{\omega,p}(\Omega)$, 
 the Lipschitz type constant of $f$ is defined as follows:
\begin{equation}\label{ceq-1.2}
\|f\|_{\Lambda_{\omega,p}(\Omega),s} := \sup_{\substack{z_1,z_2\in \Omega\\ z_1\neq z_2}} \frac{\mathscr{L}_p[f](z_1,z_2)}{\omega(|z_1 - z_2|)} .
\end{equation}

Obviously, if $f\in \Lambda_{\omega,\infty}(\Omega)$ with $\Omega=\mathbb{D}$,  $ \mathbb{T}$ or $\overline{\mathbb{D}}$, 
then  $f\in \mathcal{C}(\Omega)$ and  
$$
\|f\|_{\Lambda_{\omega,\infty}(\Omega),s} = \|f\|_{\Lambda_{\omega}(\Omega),s}.
$$
If we let
\begin{equation} \label{ceq-1.3}
 \Delta_hf(e^{i\eta })=f\big(e^{i(\eta+h)}\big)-f(e^{i\eta}) 
\end{equation}
for any $h,\eta \in \mathbb{R}$ and $f\in\Lambda_{\omega,p}( \mathbb{T})$ with $p\in(0,\infty]$,
then we see   that
\begin{equation}\label{ceq-1.4}
 \|f\|_{\Lambda_{\omega,p}( \mathbb{T}),s}
 =
 \sup_{0<|h|\leq\pi}
 \frac{\|\Delta_hf\|_{L^p( \mathbb{T})}}
 {\omega(|e^{ih}-1|)}.
\end{equation}
For a more comprehensive treatment of the spaces $\Lambda_{\omega,p}(\mathbb{D})$, 
we refer the reader to \cite{Chen2025, Dy1, Dy2, P-08,Pav-2019}.

 In particular, for a domain $\Omega \subset \mathbb{C}$, we say that a function $f$ belongs to the {\it local Lipschitz space}  
$\operatorname{loc}\Lambda_{\omega,\infty}(\Omega)$ if  \eqref{rrt-1}  holds with a fixed positive constant $C$ whenever $z \in \Omega$ and  
$|z-w| < d_{\Omega}(z)/2$ (cf. \cite{Dy2,GM,L}), where $d_{\Omega}(z)$ denotes the Euclidean distance from $z$ to the boundary of $\Omega$.  
Moreover, $\Omega$ is called a $\Lambda_{\omega,\infty}$-extension domain if  
$$
\Lambda_{\omega,\infty}(\Omega) 
= \operatorname{loc}\Lambda_{\omega,\infty}(\Omega).
$$ 
For the geometric characterization of $\Lambda_{\omega,\infty}$-extension domains, we refer to \cite{GM}.  
In \cite{L}, Lappalainen generalized the characterization from \cite{GM} and proved that $\Omega$ is a $\Lambda_{\omega,\infty}$-extension domain if and only if every pair of points 
$z_1, z_2 \in \Omega$ can be joined by a rectifiable curve $\gamma \subset \Omega$ satisfying  
\begin{equation*}
\int_{\gamma} \frac{\omega(d_{\Omega}(\zeta))}{d_{\Omega}(\zeta)} \, ds(\zeta)  
\leq C \, \omega(|z_1 - z_2|)
\end{equation*} 
for some fixed positive constant $C=C(\omega,\Omega)$, 
where $ds$ denotes the arc length measure on $\gamma$.  
Furthermore, Lappalainen \cite[Theorem~4.12]{L} showed that 
$\Lambda_{\omega,\infty}$-extension domains exist only for fast majorants $\omega$. 
In particular,  $\mathbb{D}$ is a $\Lambda_{\omega,\infty}$-extension domain of $\mathbb{C}$ for 
any fast majorant $\omega$ (see \cite{Dy2}).

Before presenting our results, let us recall a classical Hardy-Littlewood theorem for analytic functions.

\begin{Thm}\label{Thm-A}{\rm (\cite[Theorem 41]{H-L})}
Let $f\in\mathscr{A}(\mathbb{D})\cap \mathcal{C}(\overline{\mathbb{D}})$.
If $f \in \Lambda_{\omega_{\beta}, \infty}(\mathbb{T})$, then  $f \in \Lambda_{\omega_{\beta}, \infty}( \mathbb{D} )$, where $\beta \in (0,1]$ and
 $\omega_{\beta}(t) = t^{\beta}$ for   $t\in[0,\infty)$.
\end{Thm}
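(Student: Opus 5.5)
The plan is the classical route through the radial derivative. Write $f=\mathcal{P}[f|_{\mathbb{T}}]$; this representation is available because $f\in\mathcal{C}(\overline{\mathbb{D}})$ and $f$ is harmonic, by the Dirichlet problem discussion above. The strategy has three stages:
\begin{itemize}
\item show that the boundary H\"older condition forces the growth bound $|f'(z)|\le C(1-|z|)^{\beta-1}$;
\item integrate this bound along suitable paths to recover $|f(z_1)-f(z_2)|\le C|z_1-z_2|^\beta$ in $\mathbb{D}$;
\item extend to $\overline{\mathbb{D}}$ by continuity.
\end{itemize}
The standard case split is $\beta<1$ versus $\beta=1$. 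The case $\beta=1$ is in fact the easiest of all, because $\omega_1(t)=t$ is neither fast-problematic nor needs integration of a singular weight.

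\textbf{Step 1 (derivative estimate).} For $z=re^{i\theta}$, differentiate the Cauchy (or Poisson) representation:
$$
zf'(z)=\frac{1}{2\pi}\int_0^{2\pi}\frac{e^{i\eta}z}{(e^{i\eta}-z)^2}f(e^{i\eta})\,d\eta .
$$
Since $\int_0^{2\pi} e^{i\eta}(e^{i\eta}-z)^{-2}\,d\eta=0$, we may replace $f(e^{i\eta})$ by $f(e^{i\eta})-f(e^{i\theta})$. This gives
$$
|f'(z)|\le C\int_0^{2\pi}\frac{|\eta-\theta|^\beta}{|e^{i\eta}-re^{i\theta}|^2}\,d\eta .
$$
Using $|e^{i\eta}-re^{i\theta}|^2\asymp (1-r)^2+(\eta-\theta)^2$, split the integral into $|\eta-\theta|<1-r$ and its complement. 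This yields $C(1-r)^{\beta-1}$ for $\beta<1$. For $\beta=1$ the same computation only gives a logarithmic bound, so I will instead argue differently: a Lipschitz boundary function on $\mathbb{T}$ is absolutely continuous with $\partial_\theta f\in L^\infty$. Then $\partial_\theta f(re^{i\theta})=\mathcal{P}[\partial_\eta f](re^{i\theta})$ is bounded by $\|\partial_\eta f\|_\infty$, using \eqref{ceq-1.5}. For analytic $f$ we have $|f'(z)|=|\partial_\theta f|/r$, so $f'$ is bounded near $\mathbb{T}$ and hence on $\mathbb{D}$, and $f\in\Lambda_{\omega_1,\infty}(\mathbb{D})$ directly by integrating along segments (the disk is convex).

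\textbf{Step 2 (integration, $\beta<1$).} Given $z_1,z_2\in\mathbb{D}$, set $\delta=|z_1-z_2|$. If $1-|z_j|\ge\delta$ for some $j$, or more generally along the segment, integrate $f'$ along the segment when both points are at distance $\gtrsim\delta$ from $\mathbb{T}$. The bound $\delta\cdot\delta^{\beta-1}=\delta^\beta$ comes from the derivative estimate. Otherwise, move each $z_j$ radially inward to $z_j'=(1-\delta)z_j/|z_j|$ (assuming $\delta<1$; $\delta\ge 1$ is trivial by boundedness). The radial pieces cost
$$
\int_{1-\delta}^{|z_j|}(1-t)^{\beta-1}\,dt\le C\delta^\beta .
$$
The points $z_1',z_2'$ are at distance $\le C\delta$ from each other and lie on the circle of radius $1-\delta$. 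Joining them along the segment (which stays at distance $\gtrsim\delta$ from $\mathbb{T}$, or use the arc) costs $C\delta\cdot\delta^{\beta-1}$.

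\textbf{Expected obstacle.} The main obstacle is Step 1: the cancellation trick and the precise kernel estimate, together with the separate treatment needed for $\beta=1$. Step 2 is essentially the statement that $\mathbb{D}$ is a $\Lambda_{\omega,\infty}$-extension domain for fast majorants (\cite{Dy2}), and $t^\beta$ with $\beta<1$ is fast. Once the local Lipschitz estimate from Step 1 is in hand, I can quote that fact instead of the path argument. Finally, continuity of $f$ on $\overline{\mathbb{D}}$ passes the inequality to $\overline{\mathbb{D}}$.
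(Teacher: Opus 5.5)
Your proof is correct. It is the classical Hardy--Littlewood argument through the growth of $f'$, i.e.\ the route behind Theorem~\ref{HL-20}.

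\textbf{What the paper does.} The paper gives no proof of Theorem~\ref{Thm-A}; it quotes it from \cite{H-L} as background. The meaningful comparison is therefore with the paper's own $p=\infty$ machinery, namely Theorem~\ref{thm-C} and the case $p=\infty$ of Theorem~\ref{thm-1.1}. That machinery concerns harmonic Poisson integrals and requires the slow condition \eqref{ttp-1}.

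\textbf{The case $\beta<1$.} Here \eqref{ttp-1} holds, since $\delta\int_\delta^\pi t^{\beta-2}\,dt\le\delta^\beta/(1-\beta)$. So the paper's framework covers this case even for harmonic $f$ (Theorem~\ref{H-L}). Correspondingly, your Steps~1--2 never really use analyticity here. They go through verbatim with the differentiated Poisson kernel in place of the Cauchy kernel and $\mathscr{M}_f$ in place of $|f'|$.

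\textbf{The case $\beta=1$.} Here \eqref{ttp-1} fails: it produces exactly your logarithm $\delta\log(\pi/\delta)$. By Theorem~\ref{thm-C} the harmonic analogue is false, so analyticity must enter precisely at this point. Your identity $|f'(z)|=|\partial_\theta f(z)|/r$, combined with $\partial_\theta\mathcal{P}[\varphi]=\mathcal{P}[\dot\varphi]$, does exactly that.

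In the paper's language this is the formula $f-\mathcal{P}_r[f]=\int_r^1\mathcal{P}_\rho[\mathcal{H}[\dot f]]\,\frac{d\rho}{\rho}$ from the proof of Lemma~\ref{clem-3.2}. For analytic boundary data $\widehat f(k)=0$ when $k<0$, so \eqref{ceq-2.5} gives $\mathcal{H}[\dot f]=-i\dot f$. The $L^p$-boundedness of $\mathcal{H}$, which the paper needs and which is unavailable at $p=\infty$, then becomes trivial.

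\textbf{What each route buys.} The Steklov averages $\mathcal{S}_t[f]$ preserve the vanishing of negative Fourier coefficients. Hence the argument of Lemma~\ref{clem-3.3} gives $\|f-\mathcal{P}_r[f]\|_{L^\infty(\mathbb{T})}\le C\|f\|_{\Lambda_{\omega,\infty}(\mathbb{T}),s}\,\omega(1-r)$ for analytic $f$ and an arbitrary majorant $\omega$. Your route is shorter and completely explicit, but it is tailored to power majorants.

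\textbf{Two small repairs in Step~2}, neither a real gap.
\begin{enumerate}
\item Suppose only one endpoint satisfies $1-|z_j|\ge\delta$. Then the segment may approach $\mathbb{T}$, so the crude bound $\delta\cdot\delta^{\beta-1}$ is not immediate. It still follows: parametrize the segment by arclength $s$ from that endpoint, note $1-|w(s)|\ge\delta-s$, and use $\int_0^\delta(\delta-s)^{\beta-1}\,ds=\delta^\beta/\beta$.
\item The estimate $|z_1'-z_2'|\le C\delta$ for the radial projections needs $|z_j|$ bounded below. Restrict to $\delta\le 1/2$, and handle $\delta>1/2$ by the boundedness of $f$.
\end{enumerate}
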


Later,  Priwaloff \cite{Pri} proved the following result: 
 For a   harmonic function  $u:\mathbb{D}\rightarrow \mathbb{R}$
 and its harmonic conjugate $v$, if 
$u=\mathcal{P}[\varphi]$ with $\varphi\in \Lambda_{\omega_{\beta},\infty}(\mathbb{T})$, 
then $v=\mathcal{P}[\psi]$ with $\psi\in \Lambda_{\omega_{\beta},\infty}(\mathbb{T})$,
where $\beta\in (0,1)$ and $\omega_{\beta}(t)=t^{\beta}$ for $t\in[0,\infty)$. 
 
Combining  Priwaloff's result and Theorem  \ref{Thm-A}, we easily obtain the following result.


  \begin{Thm}\label{H-L} {\rm (\cite[Lemma C]{Pav-2008})}
 If $\varphi\in\Lambda_{\omega_{\beta},\infty}(\mathbb{T})$, then
$\mathcal{P}[\varphi]\in\Lambda_{\omega_{\beta},\infty}(\mathbb{D})$, where $\beta\in(0,1)$.
 \end{Thm}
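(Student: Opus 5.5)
The plan is to reduce the statement to Theorem \ref{Thm-A} by way of Priwaloff's theorem, as the text preceding the statement suggests. Since both the hypothesis and the conclusion are linear in $\varphi$, I will first treat a real-valued $\varphi\in\Lambda_{\omega_{\beta},\infty}(\mathbb{T})$ and then recover the complex case by writing $\varphi=\operatorname{Re}\varphi+i\operatorname{Im}\varphi$.

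Now let $\varphi$ be real. The Poisson integral $u=\mathcal{P}[\varphi]$ is a real harmonic function in $\mathbb{D}$, continuous on $\overline{\mathbb{D}}$, with $u|_{\mathbb{T}}=\varphi$. Take $v$ to be the harmonic conjugate of $u$ normalized by $v(0)=0$. Because $\beta\in(0,1)$, Priwaloff's result applies and gives $v=\mathcal{P}[\psi]$ for some $\psi\in\Lambda_{\omega_{\beta},\infty}(\mathbb{T})$. In particular $v$ extends continuously to $\overline{\mathbb{D}}$ with boundary values $\psi$.

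Next set $F=u+iv$. This function lies in $\mathscr{A}(\mathbb{D})\cap\mathcal{C}(\overline{\mathbb{D}})$, and $F|_{\mathbb{T}}=\varphi+i\psi$. On $\mathbb{T}$ we have
\[
|F(\zeta_1)-F(\zeta_2)|\le |\varphi(\zeta_1)-\varphi(\zeta_2)|+|\psi(\zeta_1)-\psi(\zeta_2)|\le C\,|\zeta_1-\zeta_2|^{\beta},
\]
so $F\in\Lambda_{\omega_{\beta},\infty}(\mathbb{T})$. Theorem \ref{Thm-A} then yields $F\in\Lambda_{\omega_{\beta},\infty}(\mathbb{D})$. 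Since $|u(z_1)-u(z_2)|\le|F(z_1)-F(z_2)|$, it follows that $\mathcal{P}[\varphi]=u\in\Lambda_{\omega_{\beta},\infty}(\mathbb{D})$.

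For complex-valued $\varphi$, apply this argument to its real and imaginary parts separately. Adding the two Lipschitz bounds gives the conclusion.

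The only delicate point is the identification of boundary values: we need $v$ to be genuinely $\mathcal{P}[\psi]$ with $\psi$ continuous, so that $F$ is continuous up to $\mathbb{T}$ and Theorem \ref{Thm-A} is applicable. This is exactly what Priwaloff's theorem provides. The restriction $\beta<1$ is essential at this step, since the conjugate function of a Lipschitz function need not be Lipschitz.
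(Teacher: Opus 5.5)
Your proposal is correct and follows exactly the route the paper indicates: combine Priwaloff's conjugate-function theorem with Theorem \ref{Thm-A}. Your reduction to real-valued $\varphi$ and your passage through the analytic function $F=u+iv$ simply spell out the details that the paper leaves implicit when it says the result follows ``easily.''
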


In \cite{No}, Nolder and Oberlin generalized  Theorem \ref{H-L} in terms of differentiable majorants and
obtained the following  necessary and sufficient condition for  $\mathcal{P}[\varphi]\in\Lambda_{\omega, \infty}(\mathbb{D})$ under the assumption that $\varphi \in \Lambda_{\omega, \infty}(\mathbb{T})$.

\begin{Thm}{\rm (\cite[Lemma 3.2]{No})}\label{thm-B}
	Suppose that $\omega$ is a differentiable majorant   and that $\omega' $ is decreasing on $[0,\infty)$.
Then the following statements are equivalent:
\begin{enumerate}
  \item[$(a_{1})$]   If $\varphi \in \Lambda_{\omega, \infty}(\mathbb{T})$, then $\mathcal{P}[\varphi] \in \Lambda_{\omega, \infty}(\mathbb{D})$.
  \item[$(a_{2})$]  There exists a positive constant $C $ such that  for all $\delta \in (0, \pi]$,
				$$\delta \int_{\delta}^{\pi} \frac{\omega(t)}{t^{2}} d t \leq  C \omega(\delta).$$
\end{enumerate}
 \end{Thm}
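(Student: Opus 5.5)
To prove Theorem~\ref{thm-B}, I would treat the two directions separately. The direction $(a_2)\Rightarrow(a_1)$ is a Lipschitz estimate for a harmonic extension, carried out by splitting the difference into a radial part and a boundary part. The direction $(a_1)\Rightarrow(a_2)$ comes from testing on a suitable function.

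\emph{Proof of $(a_2)\Rightarrow(a_1)$.} Let $f=\mathcal{P}[\varphi]$ with $\varphi\in\Lambda_{\omega,\infty}(\mathbb{T})$. The first step is a gradient bound: for $z=re^{i\theta}$,
$$\mathscr{M}_f(z)\le C\,\frac{\omega(1-r)}{1-r}.$$
To get it, differentiate the Poisson integral. Since $\partial\mathcal{P}(z,e^{i\eta})$ has integral zero in $\eta$, we may subtract $\varphi(e^{i\theta})$:
$$|\nabla f(z)|\le \frac{C}{2\pi}\int_{-\pi}^{\pi}\frac{|\varphi(e^{i(\theta+t)})-\varphi(e^{i\theta})|}{|e^{it}-r|^{2}}\,dt.$$
Using $|e^{it}-r|^2\asymp (1-r)^2+t^2$, split at $|t|=\delta:=1-r$:
\begin{enumerate}
\item on $|t|<\delta$, the integrand is at most $\omega(|t|)/\delta^{2}$, which integrates to at most $C\,\omega(\delta)/\delta$, since $\omega$ is increasing;
\item on $|t|>\delta$, the piece is bounded by $C\int_\delta^\pi \omega(t)t^{-2}\,dt$, which by $(a_2)$ is $\le C\,\omega(\delta)/\delta$.
\end{enumerate}

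Next take $z_1,z_2\in\mathbb{D}$ with $|z_1-z_2|=d$, written $z_j=r_je^{i\theta_j}$.
\begin{enumerate}
\item If $1-r_j\ge d$ for some $j$ (say for $z_1$, and the same argument for $z_2$), integrate the gradient bound along the segment $[z_1,z_2]$. Points there satisfy $1-|\zeta|\gtrsim d$, so since $\omega(t)/t$ is non-increasing we get $|f(z_1)-f(z_2)|\le C d\,\omega(d)/d= C\omega(d)$.
\item Otherwise both points satisfy $1-r_j<d$. Push each radially to $\rho_j=1-d$; along the radius, $\int_{1-r_j}^{d}\omega(s)/s\,ds$ must be controlled. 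This is delicate because we have no fast condition. Instead, compare $z_j$ with $e^{i\theta_j}$ directly, writing $f(z_j)-\varphi(e^{i\theta_j})$ as a Poisson average of $\varphi(e^{i(\theta_j+t)})-\varphi(e^{i\theta_j})$. That is bounded by $C\int\omega(|t|)(1-r_j)/((1-r_j)^2+t^2)\,dt$, which is $\le C\omega(1-r_j)+C(1-r_j)\int_{1-r_j}^\pi\omega(t)t^{-2}\,dt\le C\omega(1-r_j)\le C\omega(d)$ by $(a_2)$. Then use $|\varphi(e^{i\theta_1})-\varphi(e^{i\theta_2})|\le C\omega(|e^{i\theta_1}-e^{i\theta_2}|)\le C\omega(3d)\le 3C\omega(d)$, since $\omega(t)/t$ is non-increasing.
\end{enumerate}
This finishes the direction. (The hypotheses that $\omega$ is differentiable with $\omega'$ decreasing are mainly used in the converse.)

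\emph{Proof of $(a_1)\Rightarrow(a_2)$.} By a closed-graph or uniform-boundedness argument, $(a_1)$ yields a uniform constant: $\|\mathcal{P}[\varphi]\|_{\Lambda_\omega(\mathbb{D})}\le C\|\varphi\|_{\Lambda_\omega(\mathbb{T})}$ (after factoring out constants, in the Banach space norm with $|\varphi(1)|$ added). Test this with $\varphi(e^{it})=\omega(|t|)$ for $t\in[-\pi,\pi]$. Because $\omega$ is concave (its derivative $\omega'$ is decreasing) and increasing, $\varphi$ belongs to $\Lambda_{\omega,\infty}(\mathbb{T})$. Then
$$\mathcal{P}[\varphi](r)-\varphi(1)=\frac1{2\pi}\int\mathcal{P}(r,e^{it})\,\omega(|t|)\,dt\ \ge\ c\,\delta\int_\delta^\pi \frac{\omega(t)}{t^2}\,dt,\qquad \delta=1-r.$$
The Lipschitz bound with $z_1=r$ and $z_2=1$ gives this quantity $\le C\omega(\delta)$, which is $(a_2)$.

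I expect the main obstacle to be this converse. One must justify the uniform constant: if the constant is instead obtained directly from a single function, the argument must use a superposition of test functions, and it is there that the concavity hypothesis is needed.
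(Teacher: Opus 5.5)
The paper does not prove this statement. It is quoted from Nolder--Oberlin, and in Theorem~\ref{thm-1.1} the case $p=\infty$ is simply referred to Theorem~\ref{thm-C}. Your argument therefore stands on its own. It is the classical route and is correct in substance, but one step needs repair.

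In case (1) of $(a_2)\Rightarrow(a_1)$, the assumption $1-r_1\ge d$ does not give $1-|\zeta|\gtrsim d$ along all of $[z_1,z_2]$. If $1-r_1=d$ and $z_2$ lies almost radially outward from $z_1$, then $1-|\zeta|$ decreases to $1-r_2$, which can be arbitrarily small compared with $d$. Integrating the gradient bound then costs $\int_0^d\omega(u)u^{-1}\,du$, which is exactly the fast condition you meant to avoid. The fix is to use the threshold $2d$ instead:
\begin{itemize}
\item if $1-r_j\ge 2d$ for some $j$, then $1-|\zeta|\ge d$ on the segment, and your estimate gives $C\omega(d)$;
\item otherwise $1-r_1,1-r_2<2d$, and your case (2) gives $C\omega(2d)+C\omega(5d)\le C'\omega(d)$, since $|e^{i\theta_1}-e^{i\theta_2}|\le(1-r_1)+d+(1-r_2)<5d$.
\end{itemize}

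In $(a_1)\Rightarrow(a_2)$, the closed-graph step is superfluous. You test on a single function, so the constant that $(a_1)$ supplies for $\varphi(e^{it})=\omega(|t|)$ is all you need. The inequality extends to $z_2=1$ because $\mathcal{P}[\varphi]$ is continuous on $\overline{\mathbb{D}}$ with boundary values $\varphi$. The range $\delta\in(1,\pi]$ is trivial, since $\delta\int_\delta^\pi\omega(t)t^{-2}\,dt\le\omega(\delta)\log\pi$.

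Concavity is not needed either. Every majorant is subadditive because $\omega(t)/t$ is non-increasing. Hence $|\omega(|t|)-\omega(|s|)|$ is bounded by $\omega$ of the arc distance, and so by $\frac{\pi}{2}\omega(|e^{it}-e^{is}|)$. Your closing concern about superpositions is therefore unfounded. In fact your proof shows that the hypotheses on $\omega'$ are redundant, consistent with Theorem~\ref{thm-C} for $p=\infty$.

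This also explains why the paper's $p=1$ argument looks so different. By $(c_1)$ of Theorem~\ref{thm-1.1} and H\"older's inequality, the Poisson extension of any single $\varphi\in\Lambda_{\omega,\infty}(\mathbb{T})$ automatically satisfies the $p=1$ Lipschitz bound. So no single test function of your kind can detect a failure of $(b_2)$ when $p=1$. The paper instead uses the family $\Phi_N$, whose $\Lambda_{\omega,1}$ seminorms are only controlled by $C/\omega(1/N)$. It then needs the uniform boundedness principle (Lemma~\ref{clem-4.2}) to obtain a bound that is uniform over the whole family.
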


In \cite{Dy1},  Dyakonov removed  the  differentiability  assumption on $\omega$ and established  the following  Hardy-Littlewood-type theorem.

 \begin{Thm}\label{Th-B}{\rm (\cite[Lemma 4]{Dy1})}  Let $\omega$ be a regular majorant.
  If   $\varphi\in\Lambda_{\omega,\infty}(\mathbb{T})$,
then $\mathcal{P}[\varphi]\in\Lambda_{\omega,\infty}(\overline{\mathbb{D}})$.
\end{Thm}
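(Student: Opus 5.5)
We want to show that if $\omega$ is regular and $\varphi\in\Lambda_{\omega,\infty}(\mathbb{T})$, then $f=\mathcal{P}[\varphi]$ satisfies $|f(z_1)-f(z_2)|\le C\omega(|z_1-z_2|)$ for all $z_1,z_2\in\overline{\mathbb{D}}$. The plan has three steps. First we bound the gradient of $f$ using the slow condition (\ref{eq3x}). Then we obtain the local Lipschitz estimate. Finally we globalize it, using the fast condition (\ref{eq2x}) through the fact recalled above that $\mathbb{D}$ is a $\Lambda_{\omega,\infty}$-extension domain. Continuity up to $\mathbb{T}$ then extends the inequality to $\overline{\mathbb{D}}$.

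\emph{Step 1 (gradient estimate).} Fix $z=re^{i\theta}\in\mathbb{D}$. Differentiating under the integral, and using that the derivative of the Poisson kernel integrates to zero by (\ref{ceq-1.5}), we get
$$\partial_\eta f(z)=\frac{1}{2\pi}\int_0^{2\pi}\partial_\eta^{(z)}\mathcal{P}(z,e^{it})\bigl(\varphi(e^{it})-\varphi(e^{i\theta})\bigr)\,dt .$$
We use the standard kernel bound $|\nabla_z\mathcal{P}(z,e^{it})|\le C/|z-e^{it}|^2\le C'/\bigl((1-r)^2+(t-\theta)^2\bigr)$. Write $\delta=1-r$ and $s=|t-\theta|$, and note $|\varphi(e^{it})-\varphi(e^{i\theta})|\le C\omega(s)$. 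Splitting the integral at $s=\delta$ gives
$$\mathscr{M}_f(z)\le C\Bigl(\frac{1}{\delta^2}\int_0^{\delta}\omega(s)\,ds+\int_\delta^{\pi}\frac{\omega(s)}{s^2}\,ds\Bigr).$$
The first term is at most $\omega(\delta)/\delta$, because $\omega$ is increasing. The second is at most $C\omega(\delta)/\delta$ by (\ref{eq3x}). Hence
$$\mathscr{M}_f(z)\le C\,\frac{\omega(1-|z|)}{1-|z|}.$$

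\emph{Step 2 (local Lipschitz).} Take $|z-w|<d_{\mathbb{D}}(z)/2$. Every point $\zeta$ of the segment $[z,w]$ satisfies $1-|\zeta|\ge (1-|z|)/2$. Since $\omega(t)/t$ is non-increasing and $\omega$ is increasing, $\omega(t/2)\ge\omega(t)/2$, so $\omega(1-|\zeta|)/(1-|\zeta|)\le 2\,\omega(1-|z|)/(1-|z|)$ on the segment. Integrating Step 1 along the segment gives
$$|f(z)-f(w)|\le C|z-w|\,\frac{\omega(1-|z|)}{1-|z|}\le C\,\omega(|z-w|),$$
where the last inequality again uses that $\omega(t)/t$ is non-increasing and $|z-w|<1-|z|$. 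Thus $f\in\operatorname{loc}\Lambda_{\omega,\infty}(\mathbb{D})$.

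\emph{Step 3 (globalization).} A regular $\omega$ is in particular fast, so $\mathbb{D}$ is a $\Lambda_{\omega,\infty}$-extension domain and Step 2 gives $f\in\Lambda_{\omega,\infty}(\mathbb{D})$. If one prefers to avoid citing this, the direct route is Lappalainen's curve criterion, or the classical argument below. For $z_1,z_2$ with $|z_1-z_2|=h$:
\begin{itemize}
\item move each $z_j$ radially inward to $\rho_j z_j/|z_j|$ with $1-\rho_j=\max(h,1-|z_j|)$;
\item bound each radial leg by $\int_0^{h} C\,\omega(t)/t\,dt\le C\omega(h)$, using (\ref{eq2x});
\item join the two new points by an arc at distance about $h$ from $\mathbb{T}$, of length $\lesssim h$, on which Step 1 gives $|\nabla f|\le C\omega(h)/h$.
\end{itemize}

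Finally, $\varphi$ is continuous, so $f\in\mathcal{C}(\overline{\mathbb{D}})$ and the inequality passes to $\overline{\mathbb{D}}$ by taking limits. The main obstacle is the kernel-derivative estimate in Step 1, where the slow condition enters. The remaining steps are routine bookkeeping with the majorant properties.
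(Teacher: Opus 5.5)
Your proof is correct. It is the classical Hardy--Littlewood--Privalov argument: the slow condition \eqref{eq3x} gives the pointwise bound $\mathscr{M}_f(z)\le C\omega(d_{\mathbb{D}}(z))/d_{\mathbb{D}}(z)$, and the fast condition \eqref{eq2x} is used when you integrate $\omega(t)/t$ out to $\mathbb{T}$.

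The paper gives no proof of this statement; it is quoted from Dyakonov. However, the machinery the paper uses for Theorem \ref{thm-C} and Theorem \ref{thm-1.1} amounts to a genuinely different route.

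\begin{itemize}
\item Lemma \ref{clem-3.1} reduces the estimate on all of $\overline{\mathbb{D}}$ to the single radial inequality \eqref{ceq-3.2}, namely $\|\varphi-\mathcal{P}_r[\varphi]\|\le C\omega(1-r)$.
\item Once \eqref{ceq-3.2} is available, the remaining increments can be handled at the level of operators. Along circles $|z|=r$ one uses the contraction property of $\mathcal{P}_r$ (Lemma \ref{lem-E}, as in \eqref{ceq-4.21}). Between interior radii $r<s$ one uses the semigroup identity $\mathcal{P}_r-\mathcal{P}_s=\mathcal{P}_s\circ(\mathcal{P}_{r/s}-I)$ (as in \eqref{ceq-4.17}). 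Nothing is differentiated and no paths are integrated.
\item For $p=\infty$, the radial inequality follows from the slow condition alone. It uses the same splitting of the kernel at $|t|=1-r$ as your Step 1, but applied to $\varphi-\mathcal{P}_r[\varphi]$ (kernel $\lesssim (1-r)/((1-r)^2+t^2)$) rather than to $\nabla\mathcal{P}[\varphi]$ (kernel $\lesssim 1/((1-r)^2+t^2)$).
\end{itemize}

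That route therefore shows the fast hypothesis is superfluous here; this is $(b_2)\Rightarrow(b_1)$ in Theorem \ref{thm-C}. Because it works with operator norms, it also transfers verbatim to $\Lambda_{\omega,p}$. This is what lets the paper drop every condition on $\omega$ for $p\in(1,\infty)$, once Lemma \ref{clem-3.3} supplies \eqref{ceq-3.2}. What your route buys in exchange is the pointwise gradient bound itself, which is the estimate behind characterizations of the type in Theorem \ref{thm-1.3}.

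A few minor points to tidy up:
\begin{itemize}
\item In Step 2, the inequality $\omega(t/2)\ge\omega(t)/2$ is not what you need. The bound $\omega(1-|\zeta|)/(1-|\zeta|)\le 2\omega(1-|z|)/(1-|z|)$ comes from the monotonicity of $\omega(t)/t$ together with $\omega(d/2)\le\omega(d)$.
\item Conditions \eqref{eq2x} and \eqref{eq3x} are assumed only for $\delta<\delta_0$, and your radial displacement requires $h<1$. So the ranges $\delta\in[\delta_0,1]$ and $h\ge 1/2$ should be dispatched separately; this is trivial using $|\mathcal{P}[\varphi]|\le\max|\varphi|$.
\item In the direct globalization, join the displaced points by a straight segment rather than an arc. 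Its length is at most $3h$, and since $1-|\zeta|$ is concave, it stays $\ge h$ along the segment.
\end{itemize}
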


In 2025, based on Theorem \ref{Th-B}, Chen and Hamada obtained the following improved version of the Hardy-Littlewood-type theorem.

\begin{Thm}{\rm (\cite[Theorem 2.11]{Chen2025})}\label{thm-C}
	Let $\omega$ be a majorant and $p \in [1, \infty]$.
	\begin{enumerate}
		\item[$(b_{1})$] If $\varphi \in \Lambda_{\omega, p}(\mathbb{T})\cap \mathcal{C}(\mathbb{T})$, then $\mathcal{P}[\varphi] \in \Lambda_{\omega, p}(\overline{\mathbb{D}})$.
		\item[$(b_{2})$] There exists a positive constant $C$ such that for all $\delta \in (0, \pi] $,
		
		\be\label{ttp-1}\delta \int_{\delta}^{\pi} \frac{\omega(t)}{t^{2}} d t \leqslant C \omega(\delta).\ee
		
	\end{enumerate}
	Then, $(b_{2}) \Rightarrow (b_{1})$ for $p \in [1, \infty]$, and $(b_{1}) \Leftrightarrow (b_{2})$ for $p = \infty$.
\end{Thm}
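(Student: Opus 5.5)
The plan is to prove the two implications of Theorem \ref{thm-C} separately, taking Theorem \ref{Th-B} and the Poisson representation as given.

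\emph{The implication $(b_2)\Rightarrow(b_1)$.} Fix $p\in[1,\infty)$ and $\varphi\in\Lambda_{\omega,p}(\mathbb{T})\cap\mathcal{C}(\mathbb{T})$, and write $f=\mathcal{P}[\varphi]$.

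First, I would reduce to radial and angular increments. For $z_1,z_2\in\overline{\mathbb{D}}$ with $|z_1|\le|z_2|$, the rotation invariance of $\mathscr{L}_p$ lets me insert the intermediate point $z_3=|z_1|z_2/|z_2|$. Then $|z_1-z_3|\le|z_1-z_2|$ and $|z_3-z_2|\le|z_1-z_2|$, so it suffices to treat two cases: two points on the same circle, and two points on the same ray.

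Same circle $|z|=r$: since $f(re^{i(\eta+h)})=\mathcal{P}[\Delta_h\varphi](re^{i\eta})+f(re^{i\eta})$, Minkowski's integral inequality and \eqref{ceq-1.5} give
$$\|f_r(e^{i(\cdot+h)})-f_r\|_{L^p}\le\|\Delta_h\varphi\|_{L^p}.$$
Moreover $|re^{ih}-r|\le|e^{ih}-1|$. If $r$ is small the desired bound follows from the smallness of the derivative instead. Alternatively, I would use that $\omega(rt)\ge r\omega(t)$, which holds because $\omega(t)/t$ is non-increasing; this costs only a factor, but I want a bound uniform in $r$. So I would split into $r\ge1/2$, where $|e^{ih}-1|\le 2|re^{ih}-r|$, and $r<1/2$, where a Bernstein-type gradient bound gives $\|\partial_\theta f_r\|_p\le C\|\varphi-\varphi_{\rm avg}\|_p\le C\omega(1)$.

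Same ray, $z_1=\rho\zeta$ and $z_2=r\zeta$ with $\rho<r\le1$: I would write
$$f(r\zeta)-f(\rho\zeta)=\int_\rho^r \partial_s f(s\zeta)\,ds$$
and estimate $\|\partial_s f_s\|_{L^p}$. Using $\int\partial_s\mathcal{P}=0$,
$$\partial_s f(se^{i\theta})=\frac1{2\pi}\int \partial_s\mathcal{P}(se^{i\theta},e^{i(\theta+h)})\bigl(\varphi(e^{i(\theta+h)})-\varphi(e^{i\theta})\bigr)\,dh,$$
so Minkowski together with $|\partial_s\mathcal{P}|\lesssim(1-s)/((1-s)^2+h^2)$-type kernel bounds yields
$$\|\partial_s f_s\|_p\le C\int_0^\pi\frac{(1-s)\,\omega(h)}{(1-s)^2+h^2}\,dh\le C\frac{\omega(1-s)}{1-s}.$$
This last step splits the integral at $h=1-s$, using $\omega(h)\le\omega(1-s)$ below the split and $(b_2)$ above it.

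Finally, integrating in $s$ requires $\int_\rho^r\omega(1-s)/(1-s)\,ds\lesssim\omega(r-\rho)$, which needs the fast condition and is not available here. \textbf{This is the main obstacle.} To avoid it, I would instead compare with the boundary: set $\delta=r-\rho$, and if $1-\rho\le 2\delta$, estimate $f(r\zeta)-f(\rho\zeta)$ through the differences $f(r\zeta)-\varphi$ and $f(\rho\zeta)-\varphi$. Each of these is controlled in $L^p$ by
$$\frac1{2\pi}\int\mathcal{P}\,\|\Delta_h\varphi\|_p\,dh\le C\int_0^\pi\frac{(1-\rho)\,\omega(h)}{(1-\rho)^2+h^2}\,dh\le C\omega(1-\rho)\le C\omega(2\delta)\le 2C\omega(\delta),$$
using $(b_2)$ again. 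In the other case, $1-\rho>2\delta$, we have $1-s\asymp1-\rho$ on $[\rho,r]$, so the integral is $\lesssim\delta\,\omega(1-\rho)/(1-\rho)\le\omega(\delta)$ by monotonicity of $\omega(t)/t$.

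The case $p=\infty$ is identical, with sup norms in place of $L^p$ norms.

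\emph{The implication $(b_1)\Rightarrow(b_2)$ for $p=\infty$.} I would test $(b_1)$ on the function $\varphi(e^{i\theta})=\omega(|\theta|)$ for $\theta\in[-\pi,\pi]$. This $\varphi$ lies in $\Lambda_{\omega,\infty}(\mathbb{T})$ by subadditivity of the majorant, since $\omega(t)/t$ is non-increasing. Then I would compare $f(1)-f(1-\delta)$ with $\omega(\delta)$. Because $\varphi(1)=0$,
$$f(1-\delta)\ge c\int_\delta^\pi\frac{\delta\,\omega(t)}{t^2}\,dt,$$
using $\mathcal{P}(1-\delta,e^{it})\gtrsim\delta/t^2$ for $t\ge\delta$. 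The bound $|f(1)-f(1-\delta)|\le C\omega(\delta)$ from $(b_1)$ then yields \eqref{ttp-1}, possibly via a uniform boundedness argument if the constant is not explicit.
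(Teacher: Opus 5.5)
Your argument is correct in substance, but two estimates need fixing.

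\emph{The small-radius circle case.} Here the Bernstein-type bound must carry a factor $r$: for $r\le 1/2$ one has $\|\partial_\theta\mathcal{P}_r[\varphi]\|_{L^p(\mathbb{T})}\le C\,r\,\|\varphi-\varphi_{\mathrm{avg}}\|_{L^p(\mathbb{T})}$, because $\frac{d}{d\eta}\mathcal{P}_r(\eta)=O(r)$ there. Without this factor you only get $C|h|\,\omega(1)$, which is not $\lesssim\omega(r|e^{ih}-1|)$ as $r\to0$. With it, $r|h|\,\omega(2)\lesssim\omega(r|e^{ih}-1|)$ follows from $|h|\le\frac{\pi}{2}|e^{ih}-1|$ and $\omega(t)/t\ge\omega(2)/2$ on $(0,2]$.

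\emph{The radial kernel.} Your kernel bound is off by a factor $(1-s)^{-1}$. The correct estimate is $|\partial_s\mathcal{P}_s(h)|\lesssim 1/\big((1-s)^2+h^2\big)$; already $\partial_s\mathcal{P}_s(0)=2/(1-s)^2$. With it, your split at $h=1-s$ gives exactly $\|\partial_s f_s\|_{L^p}\lesssim\omega(1-s)/(1-s)$, which is what you go on to use. With the kernel as written you would have obtained $\omega(1-s)$, and there would be no obstacle to circumvent.

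The rest checks out: the reduction through $z_3$, the boundary comparison when $1-\rho\le2\delta$, and the interior case $1-\rho>2\delta$. For $p=\infty$, the test function $\omega(|\theta|)$ is a single fixed function, so its constant depends only on $\omega$ and no uniform boundedness argument is needed. For $\delta\in[1,\pi]$, \eqref{ttp-1} is trivial.

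\emph{Comparison with the paper's route.} The paper does not reprove this theorem. However, Lemma \ref{clem-3.1} shows that the cited argument needs $(b_2)$ only to secure the single estimate \eqref{ceq-3.2}, required uniformly for all $f\in\mathcal{C}(\mathbb{T})\cap\Lambda_{\omega,p}(\mathbb{T})$; every increment in $\overline{\mathbb{D}}$ is then derived from it.

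Your bound $\|\mathcal{P}_\rho[\varphi]-\varphi\|_{L^p(\mathbb{T})}\lesssim\omega(1-\rho)$ is exactly \eqref{ceq-3.2}. The difference lies in the interior radial increments, which you treat with a separate derivative bound (a second use of $(b_2)$) plus a case split to avoid the fast condition. The uniformity in $f$ makes this unnecessary. Apply \eqref{ceq-3.2} to the dilate $\mathcal{P}_r[\varphi]$, whose seminorm is at most $\|\varphi\|_{\Lambda_{\omega,p}(\mathbb{T}),s}$ by the argument of \eqref{ceq-4.21}--\eqref{ceq-4.22}. Combined with $\mathcal{P}_{\rho/r}\big[\mathcal{P}_r[\varphi]\big]=\mathcal{P}_\rho[\varphi]$, this gives directly, for $r\ge 1/2$,
\[
\|\mathcal{P}_r[\varphi]-\mathcal{P}_\rho[\varphi]\|_{L^p(\mathbb{T})}\lesssim\omega(1-\rho/r)\le 2\,\omega(r-\rho),
\]
with no derivative estimates.

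Your route is explicit and self-contained. The modular one is what the paper exploits for Theorem \ref{thm-1.1}: for $1<p<\infty$ it proves \eqref{ceq-3.2} for every majorant (Lemma \ref{clem-3.3}) and obtains $(c_1)$ at once, whereas in your scheme $(b_2)$ would have to be removed from two estimates. Similarly, your single test function works because $\mathscr{L}_\infty$ is pointwise; for $p=1$ the paper needs the family $\Phi_N$ from \eqref{ceq-4.9} together with the uniform boundedness principle.
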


Due to the limitations of their method, the authors of \cite{Chen2025} are unable to determine whether $(b_{1})$ implies $(b_{2})$ when $p \in [1,\infty)$.
 Very recently, Chen et al.~\cite{CCHL-2025} extended Theorem~\ref{thm-C} to higher-dimensional settings.  
However, for $p \in [1,\infty)$, it remains an open problem whether condition $(b_{1})$ implies $(b_{2})$.  
This is the so-called Hardy-Littlewood-type equivalence problem.

\begin{ques}{\rm (\cite[Question~2]{CCHL-2025})} (The Hardy-Littlewood-type equivalence problem)\label{Qes-2}
Does $(b_{1}) \Leftrightarrow (b_{2})$ hold for all $p \in [1,\infty]$ in Theorem \ref{thm-C}?
\end{ques}

The first purpose of this paper is to give an answer to this open problem. 

\begin{thm}\label{thm-1.1}
Let $\omega$ be a majorant and let $ p\in[1,\infty]$.
\begin{enumerate}
 \item[$(c_{1})$] If $p\in(1,\infty)$, then $(b_{1})$ holds for every
 majorant $\omega$.
\item[$(c_{2})$] If $p\in\{1,\infty\}$, then $(b_{1}) \Leftrightarrow (b_{2})$.
\end{enumerate}
\end{thm}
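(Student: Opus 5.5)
We need $(b_1)$ for every majorant when $1<p<\infty$, and the converse $(b_1)\Rightarrow(b_2)$ when $p=1$. The case $p=\infty$ is already contained in Theorem \ref{thm-C}, and so is $(b_2)\Rightarrow(b_1)$ for $p=1$.

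\emph{Step 1: $(c_1)$.} Let $\varphi\in\Lambda_{\omega,p}(\mathbb{T})\cap\mathcal{C}(\mathbb{T})$ with $1<p<\infty$, and take $z_1,z_2\in\overline{\mathbb{D}}$.
\begin{itemize}
\item If $|z_1|=|z_2|=r$, write $z_2=z_1e^{ih}$. Then $\mathcal{P}[\varphi](e^{i\eta}z_2)-\mathcal{P}[\varphi](e^{i\eta}z_1)$ is the Poisson integral of $\Delta_h\varphi$ evaluated at $re^{i\eta}e^{i\arg z_1}$. Since Poisson integrals contract $L^p$ norms, we get $\mathscr{L}_p\le \|\Delta_h\varphi\|_p\le C\omega(|z_1-z_2|)$.
\item The radial case, $z_1=r$ and $z_2=\rho$, is the essential one. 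We must bound $\|\varphi_\rho-\varphi_r\|_p$, where $\varphi_r(\eta)=\mathcal{P}[\varphi](re^{i\eta})$.
\item The general case follows from these two by the triangle inequality, passing through the point $|z_1|z_2/|z_2|$, together with the elementary comparisons $||z_1|-|z_2||\le|z_1-z_2|$ and $|\,|z_1|(\hat z_2-\hat z_1)|\le 2|z_1-z_2|$, where $\hat z=z/|z|$. Subadditivity of $\omega$ (which follows from $\omega(t)/t$ non-increasing) handles the constants.
\end{itemize}

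\emph{Step 2: the radial estimate.} Write $\partial_r\varphi_r=r^{-1}\,\partial_\eta\widetilde{\varphi_r}$, where the tilde denotes the conjugate function. On Fourier coefficients this is the identity $|n|r^{|n|-1}=r^{-1}(-i\,\mathrm{sgn}\,n)(in)r^{|n|}$.

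Riesz's theorem says the Hilbert transform is bounded on $L^p$ for $1<p<\infty$. Hence
\[
\|\partial_r\varphi_r\|_p\le C_p\,\|\partial_\eta\varphi_r\|_p .
\]
Next, $\partial_\eta\varphi_r=\mathcal{P}_r'*\varphi=\mathcal{P}_r'*(\varphi(\cdot-t)-\varphi)$ integrated in $t$. Minkowski's inequality then gives
\[
\|\partial_\eta\varphi_r\|_p\le \int|\mathcal{P}_r'(t)|\,\|\Delta_t\varphi\|_p\,dt\le C\int\frac{(1-r)\,\omega(|t|)}{(1-r)^3+|t|^3}\,dt .
\]
This last integral is the point where, for $p=\infty$, one needs $(b_2)$.

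It is better to avoid the $\eta$-derivative altogether. Write $\varphi_\rho-\varphi_r=(\mathcal{P}_\rho-\mathcal{P}_r)*\varphi$ and split $\varphi=\varphi_\delta+(\varphi-\varphi_\delta)$ with $\delta=1-r$. Then:
\begin{itemize}
\item $\|\varphi-\varphi_\delta\|_p\le C\omega(\delta)$, using only the modulus of continuity and the fact that $\omega$ is non-decreasing.
\item For the smooth part, use $\|\partial_r\varphi_{\delta,s}\|_p\le C_p\|\partial_\eta\varphi_{\delta,s}\|_p$ and $\|\partial_\eta\varphi_\delta\|_p\le C\omega(\delta)/\delta$. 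The latter holds for a smoothing at scale $\delta$ (de la Vallée Poussin or Poisson at radius $1-\delta$), using only $\omega(\delta)$ and monotonicity, with no tail integral because differences of size $\omega$ are integrated against a kernel of mass $\lesssim 1/\delta$.
\end{itemize}
Integrating in $s$ from $r$ to $\rho$ gives $C\omega(\delta)(\rho-r)/\delta\le C\omega(\rho-r)$ when $\rho-r\le\delta$, using that $\omega(t)/t$ is non-increasing. When $\rho-r>\delta$, use $\|\varphi_r-\varphi\|_p\le C\omega(1-r)$, with the same smoothing argument, on each endpoint.

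This is where I expect the main obstacle: making the uniform-in-$\omega$ bound rigorous, with the constant $C_p$ from the Hilbert transform being the only place $p$ enters. An alternative route is to construct $\mathcal{P}[\varphi]$-differences via Littlewood--Paley blocks, using dyadic Fourier projections bounded on $L^p$.

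\emph{Step 3: $(c_2)$ for $p=1$, the implication $(b_1)\Rightarrow(b_2)$.} Assume $(b_1)$ with $p=1$. By the closed graph theorem (or the uniform boundedness principle applied to the maps $\varphi\mapsto(\varphi_\rho-\varphi_r)/\omega(\rho-r)$), we get a uniform inequality
\[
\|\varphi_\rho-\varphi_r\|_1\le C\|\varphi\|_{\Lambda_{\omega,1}}
\]
on the completion of $\Lambda_{\omega,1}\cap\mathcal{C}$.

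Now use $L^1$–$L^\infty$ duality: $\Lambda_{\omega,1}$ contains translates of functions whose behaviour forces conjugate-function growth. Concretely, test against $\varphi(\eta)=\omega(|\eta|)$-type profiles, suitably truncated and continuous, for which $\|\Delta_h\varphi\|_1\le C\omega(|h|)$. The radial difference of the Poisson integral at $1-\delta$ and $1$ near $\eta=0$ picks up the conjugate-kernel integral, which is roughly $\delta\int_\delta^\pi\omega(t)t^{-2}\,dt$ on a set of measure $\approx\delta$. The $L^1$ loss of a factor $\delta$ is compensated by instead using $\varphi=\sum$ translates, or by dualizing the operator to $L^\infty$, so that the $p=1$ estimate becomes the $p=\infty$ estimate for the adjoint. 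Then the necessity part $(b_1)\Leftrightarrow(b_2)$ for $p=\infty$ in Theorem \ref{thm-C} yields $(b_2)$. Verifying that this adjoint/duality reduction preserves the $\Lambda_{\omega}$ norms is the delicate point of this step.
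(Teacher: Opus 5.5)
\textbf{Step 2 (the case $(c_1)$).} Your architecture matches the paper's: split at scale $\delta=1-r$, and control the smooth part by the Hilbert-transform bound $\|g-\mathcal{P}_r[g]\|_{L^p(\mathbb{T})}\le C(1-r)\|\dot g\|_{L^p(\mathbb{T})}$ of Lemma~\ref{clem-3.2}. However, neither smoothing you name works as you justify it.

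\emph{Poisson smoothing is circular.} With $\varphi_\delta=\mathcal{P}_{1-\delta}[\varphi]$, the rough-part bound $\|\varphi-\varphi_\delta\|_{L^p(\mathbb{T})}\le C\omega(\delta)$ is exactly the estimate you are trying to prove. For $p=1$ it fails whenever $(b_2)$ fails; that is the content of $(c_2)$.

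\emph{De la Vall\'ee Poussin smoothing needs more than kernel bounds.} Its kernel has tail $(nt^2)^{-1}$ and its derivative has tail $t^{-2}$, with $n\approx 1/\delta$. Direct kernel estimates therefore reintroduce $\delta\int_\delta^\pi\omega(t)t^{-2}\,dt$ in both of your bullets, and you would need the Jackson and Stechkin inequalities instead.

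\emph{The clean choice is the paper's Steklov average.} Set $\mathcal{S}_\delta[\varphi](e^{i\theta})=\delta^{-1}\int_0^\delta\varphi(e^{i(\theta+\eta)})\,d\eta$. Minkowski's inequality gives $\|\varphi-\mathcal{S}_\delta[\varphi]\|_{L^p(\mathbb{T})}\le\sup_{|h|\le\delta}\|\Delta_h\varphi\|_{L^p(\mathbb{T})}$, and exactly $\delta\|(\mathcal{S}_\delta[\varphi])'\|_{L^p(\mathbb{T})}=\|\Delta_\delta\varphi\|_{L^p(\mathbb{T})}$.

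\emph{Your interior radial step does not close.} With $\delta=1-r$, the rough part contributes $2C\omega(1-r)$. This is not $O(\omega(\rho-r))$ when $\rho-r\ll 1-r$, and the case $\rho-r>\delta$ never occurs. Instead, prove only the boundary estimate $\|\varphi-\mathcal{P}_r[\varphi]\|_{L^p(\mathbb{T})}\le C\omega(1-r)$. Then transfer it, for example via $\mathcal{P}_r=\mathcal{P}_\rho\mathcal{P}_{r/\rho}$, which gives $\|\varphi_\rho-\varphi_r\|_{L^p(\mathbb{T})}\le\|\varphi-\mathcal{P}_{r/\rho}[\varphi]\|_{L^p(\mathbb{T})}$. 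Small radii need a separate, easy bound. Your rotation estimate $\|\Delta_h\varphi\|_{L^p(\mathbb{T})}\le C\omega(|z_1-z_2|)$ is also false for small $|z_1|$, since $|z_1-z_2|=|z_1|\,|e^{ih}-1|$. This bookkeeping is what Lemma~\ref{clem-3.1} handles.

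\emph{You also misplaced where $(b_2)$ enters.} Splitting at $|t|=1-r$ and using that $\omega(t)/t$ is non-increasing gives $\int|\mathcal{P}_r'(t)|\,\omega(|t|)\,dt\lesssim\omega(1-r)/(1-r)$ for every majorant. For $p=\infty$, the failure occurs at the conjugate-function step. Your first route fails for a different reason: integrating up to $s=1$ produces the Dini integral $\int_0^{1-r}\omega(t)t^{-1}\,dt$.

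\textbf{Step 3 (the case $p=1$) is the genuine gap.}

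\emph{The uniform-boundedness setup.} This step is Lemma~\ref{clem-4.2}, but it must be run on a complete space on which $(b_1)$ is actually assumed, not on a completion. The paper gives $\mathcal{C}(\mathbb{T})\cap\Lambda_{\omega,1}(\mathbb{T})$ the norm $\|f\|_{\max}+\|f\|_{\Lambda_{\omega,1}(\mathbb{T}),s}$ and proves it is a Banach space (Lemma~\ref{clem-4.1}). After this point your proposal has no argument.

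\emph{The proposed duality reduction does not work.} Let $\mathcal{T}_r[f]=(f-\mathcal{P}_{1-r}[f])/\omega(r)$. The adjoint of $\mathcal{T}_r\colon X_\omega(\mathbb{T})\to L^1(\mathbb{T})$ maps $L^\infty(\mathbb{T})$ into $X_\omega(\mathbb{T})^*$. It says nothing about $\mathcal{T}_r\colon\Lambda_{\omega,\infty}(\mathbb{T})\to L^\infty(\mathbb{T})$. Averaging an $L^\infty$-extremal $f$ against $\psi\in L^1(\mathbb{T})$ keeps the $\Lambda_{\omega,1}$ seminorm under control. But $\|\mathcal{T}_r[f*\psi]\|_{L^1(\mathbb{T})}\le\|\mathcal{T}_r[f]\|_{L^1(\mathbb{T})}\|\psi\|_{L^1(\mathbb{T})}$, so you never reach $\|\mathcal{T}_r[f]\|_{L^\infty(\mathbb{T})}$. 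Hence Theorem~\ref{thm-C} cannot be invoked.

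\emph{Your test profiles detect nothing.} The cusp contribution satisfies $\delta\cdot\delta\int_\delta^\pi\omega(t)t^{-2}\,dt\le\delta\,\omega(\pi)\le\pi\,\omega(\delta)$ for every majorant. Periodizing the cusp at spacing $L$ cuts the tail integral off at $L$. Recovering the lost factor $\delta$ needs $L\approx\delta$, which destroys the very quantity you want to detect.

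\emph{The missing idea is to test with jumps.} Jumps are tolerated by $\Lambda_{\omega,1}$ but not by $\Lambda_{\omega,\infty}$. The argument runs as follows.
\begin{itemize}
\item Take $\Phi_N=\mathrm{sgn}(\sin N\theta)$, regularized as $\mathcal{P}_\sigma[\Phi_N]$ so that it lies in $X_\omega(\mathbb{T})$. Then $\|\Phi_N\|_{\Lambda_{\omega,1}(\mathbb{T}),s}\le 2\pi^2/\omega(1/N)$.
\item The identity $\mathcal{P}_\rho[\Phi_N](e^{i\theta})=\mathcal{P}_{\rho^N}[\Phi_1](e^{iN\theta})$, together with the $L^1$ smoothing error of a single jump, gives $\|\Phi_N-\mathcal{P}_{1-r}[\Phi_N]\|_{L^1(\mathbb{T})}\ge\frac{rN}{5\pi}\log\frac{1}{2rN}$.
\item Combined with the uniform bound, this yields only the weak inequality $Q(1/N)\log\frac{1}{2rN}\le C_8Q(r)$, where $Q(t)=\omega(t)/t$. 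So an iteration is still required.
\item Choose $N\in[1/(Ar),2/(Ar)]$ with $A=4e^{2C_8}+1$. This gives $Q(Ar)\le Q(r)/2$.
\item Summing over the intervals $[A^k\delta,A^{k+1}\delta]$ gives $\int_\delta^\pi Q(t)t^{-1}\,dt\le CQ(\delta)$, which is $(b_2)$.
\end{itemize}
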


\begin{rem}\label{rem-1.1}
It follows from Theorem \ref{thm-1.1} that  the assumption (\ref{ttp-1}) is redundant   for  $p \in (1, \infty)$ in Theorem \ref{thm-C}.
Hence, $(b_{1})$ is not equivalent to $(b_{2})$ for $p \in (1, \infty)$.
\end{rem}

\smallskip

\subsection*{Applications of Theorem \ref{thm-1.1}}

Generally speaking, it is reasonable to expect that an analytic function behaves smoothly on the boundary when its derivative exhibits slow growth, and that the converse also holds in some sense. Nevertheless, Hardy and Littlewood established a principle that can be formulated in a remarkably precise manner, as stated in the following theorem.

\begin{Thm}\label{HL-20} {\rm (\cite[Theorem 5.1]{duren} or \cite[Theorem 40]{H-L})}
Let $f\in\mathscr{A}(\mathbb{D})\cap \mathcal{C}(\overline{\mathbb{D}})$.
Then for $\beta\in(0,1]$, 
we have $f\in\Lambda_{\omega_{\beta},\infty}(\mathbb{T})$ if and only if
$$
|f'(z)|=O\left(\frac{1}{(1-|z|)^{1-\beta}}\right)~~\mbox{as}~ |z|\rightarrow1^{-},
$$  where $\omega_{\beta}$ is defined in Theorem \ref{Thm-A}.
\end{Thm}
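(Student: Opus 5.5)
We prove the two implications separately. Throughout, write $z=re^{i\theta}$ with $0<r<1$, and set $\varphi=f|_{\mathbb{T}}$. Since $f\in\mathcal{C}(\overline{\mathbb{D}})$, the boundary values are attained continuously, and $f=\mathcal{P}[\varphi]$ in $\mathbb{D}$.

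\emph{Sufficiency of the derivative bound.} Assume $|f'(z)|\le M(1-|z|)^{\beta-1}$ for $|z|$ near $1$. Since $f'$ is bounded on compact subsets, this bound holds on all of $\mathbb{D}$ after enlarging $M$. Take two boundary points $e^{i\theta_1},e^{i\theta_2}$ with $\delta=|\theta_1-\theta_2|\le\pi$. We may assume $\delta<1/2$, since otherwise the Lipschitz estimate follows from the boundedness of $f$.
\begin{itemize}
\item First work with the points $\rho e^{i\theta_1},\rho e^{i\theta_2}$ for $\rho\in(1-\delta,1)$. Join them by three pieces: the radial segment from $\rho e^{i\theta_1}$ to $(1-\delta)e^{i\theta_1}$, the arc of $|z|=1-\delta$ between the two angles, and the radial segment from $(1-\delta)e^{i\theta_2}$ out to $\rho e^{i\theta_2}$.
\item On each radial piece, integrating $|f'|$ gives at most $\int_{1-\delta}^{1}M(1-s)^{\beta-1}\,ds=M\delta^\beta/\beta$.
\item On the arc, the length is at most $\delta$ and $|f'|\le M\delta^{\beta-1}$, so the contribution is at most $M\delta^\beta$.
\item Hence $|f(\rho e^{i\theta_1})-f(\rho e^{i\theta_2})|\le C\delta^\beta$ uniformly in $\rho$. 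Letting $\rho\to1^-$ and using continuity on $\overline{\mathbb{D}}$ gives $|\varphi(e^{i\theta_1})-\varphi(e^{i\theta_2})|\le C\delta^\beta\le C'|e^{i\theta_1}-e^{i\theta_2}|^\beta$.
\end{itemize}

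\emph{Necessity.} Assume $|\varphi(e^{it})-\varphi(e^{is})|\le L|e^{it}-e^{is}|^\beta$. The cleanest route, which works for every $\beta\in(0,1]$, is to estimate the angular derivative.
\begin{itemize}
\item Fix $r$. Since $\mathcal{P}$ commutes with rotations, for real $h$
$$f(re^{i(\theta+h)})-f(re^{i\theta})=\frac{1}{2\pi}\int_0^{2\pi}\mathcal{P}(re^{i\theta},e^{i\eta})\bigl(\varphi(e^{i(\eta+h)})-\varphi(e^{i\eta})\bigr)\,d\eta .$$
\item For $\beta=1$, divide by $h$ and use \eqref{ceq-1.5}. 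This gives $|f(re^{i(\theta+h)})-f(re^{i\theta})|/|h|\le L$. Letting $h\to0$ yields $|izf'(z)|\le L$, so $f'$ is bounded near $\mathbb{T}$.
\item For $\beta<1$ a second estimate is needed, because this argument only yields the $\beta$-Hölder bound $|f(re^{i(\theta+h)})-f(re^{i\theta})|\le L|h|^\beta$ in $\theta$. Use Cauchy's formula on $\mathbb{T}$, subtracting the constant $\varphi(e^{i\theta})$; this is legitimate because $\int_{\mathbb{T}}(\zeta-z)^{-2}\,d\zeta=0$:
$$f'(z)=\frac{1}{2\pi i}\int_{\mathbb{T}}\frac{\varphi(\zeta)-\varphi(e^{i\theta})}{(\zeta-z)^2}\,d\zeta .$$
\item With $\zeta=e^{i(\theta+t)}$ and $|t|\le\pi$, one has $|\zeta-z|\ge c\bigl((1-r)+|t|\bigr)$. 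Therefore
$$|f'(z)|\le C\int_{-\pi}^{\pi}\frac{|t|^\beta}{\bigl((1-r)+|t|\bigr)^2}\,dt .$$
\item Split this integral at $|t|=1-r$. The inner part is at most $(1-r)^{-2}(1-r)^{1+\beta}$. The outer part is at most $\int_{1-r}^{\pi}t^{\beta-2}\,dt\le C(1-\beta)^{-1}(1-r)^{\beta-1}$. Both are $O((1-r)^{\beta-1})$.
\end{itemize}

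The only genuine obstacle is the endpoint $\beta=1$ in the necessity direction. There the Cauchy-kernel estimate degenerates into $\int_{1-r}^\pi t^{-1}\,dt\sim\log\frac1{1-r}$, which is not sharp. This is why the case $\beta=1$ is handled separately by the difference-quotient argument through the Poisson representation, which avoids any loss. The remaining steps are routine once the elementary inequality $|e^{i(\theta+t)}-re^{i\theta}|\ge c\bigl((1-r)+|t|\bigr)$ is in hand.
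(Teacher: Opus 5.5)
Your proof is correct, including the separate treatment of $\beta=1$. The paper does not prove this statement; it quotes it from Duren and Hardy--Littlewood. The only link it draws is Remark~\ref{rem-1.10}, which recovers the case $\beta\in(0,1)$ by specializing Theorem~\ref{thm-1.3} to $K=1$, $K_0=0$, $p=\infty$, $\omega(t)=t^\beta$ (using $\mathscr{M}_f=|f'|$ for analytic $f$).

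Your argument is instead the classical direct one:
\begin{enumerate}
\item for sufficiency, path integration along the radial--circular--radial path at radius $1-\delta$;
\item for necessity, the Cauchy kernel with the constant $\varphi(e^{i\theta})$ subtracted.
\end{enumerate}

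The comparison is instructive at the endpoint. The paper's route passes through the $p=\infty$ Hardy--Littlewood-type theorem for Poisson integrals (Theorem~\ref{thm-C}, Theorem~\ref{thm-1.1}), which requires the slow condition. Since $\delta\int_\delta^\pi t^{-1}\,dt=\delta\log(\pi/\delta)$, the majorant $\omega(t)=t$ is not slow. Accordingly, the necessity direction at $\beta=1$ genuinely fails for harmonic functions. That obstruction is exactly the logarithm you found in the Cauchy-kernel integral.

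Your difference-quotient argument avoids it because it exploits analyticity. Rotation invariance of $\mathcal{P}$ transfers the boundary Lipschitz bound to $\partial_\theta f$ with no loss. For analytic $f$ the angular derivative controls the whole derivative, $|\partial_\theta f(re^{i\theta})|=r|f'(re^{i\theta})|$. For a merely harmonic function, the radial derivative is not controlled by the angular one. Your sufficiency direction, by contrast, works verbatim for harmonic $f$ with $|f'|$ replaced by $\mathscr{M}_f$.

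So your approach gets the full range $\beta\in(0,1]$ by elementary means. The paper's machinery buys generality (harmonic $(K,K_0)$-quasiregular maps, $L^p$ moduli, general majorants), but at $p=\infty$ it loses the endpoint $\beta=1$.

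One small point deserves a line in your write-up: the Cauchy formula with boundary values on $\mathbb{T}$. It follows from the formula on circles $|\zeta|=s<1$ by letting $s\to1^-$, using uniform continuity of $f$ on $\overline{\mathbb{D}}$.
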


Recently, Das and Kaliraj \cite{DK} generalized Theorem \ref{HL-20} to complex-valued harmonic functions as follows
(see also \cite[Theorems 2.7 and 2.11]{Chen2025}).

\begin{Thm}\label{Thm-oj}{\rm (\cite[Theorem 2]{DK})}
Let  $f\in\mathscr{H}(\mathbb{D})\cap \mathcal{C}(\overline{\mathbb{D}})$.
Then  for $\beta\in(0,1)$, 
we have  $f\in\Lambda_{\omega_{\beta},\infty}(\mathbb{T})$ if and only if
$$\mathscr{M}_{f}(z)=O\left(\frac{1}{(1-|z|)^{1-\beta}}\right)~~\mbox{as}~ |z|\rightarrow1^{-}.
$$ 
\end{Thm}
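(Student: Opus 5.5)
We prove the two implications separately. The forward direction combines Theorem \ref{H-L} with a gradient estimate for harmonic functions. The reverse direction integrates the derivative bound along a path that dips into the disk to a depth comparable to the distance between the two boundary points.

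\emph{Necessity.} Suppose $f\in\mathscr{H}(\mathbb{D})\cap\mathcal{C}(\overline{\mathbb{D}})$ and $\varphi:=f|_{\mathbb{T}}\in\Lambda_{\omega_\beta,\infty}(\mathbb{T})$ with $\beta\in(0,1)$.

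\begin{enumerate}
\item By the Dirichlet uniqueness recalled above, $f=\mathcal{P}[\varphi]$.
\item Theorem \ref{H-L} then gives $f\in\Lambda_{\omega_\beta,\infty}(\mathbb{D})$, that is, $|f(w)-f(z)|\le C|w-z|^\beta$ for all $z,w\in\mathbb{D}$. This is the only place where $\beta<1$ is used.
\item Fix $z\in\mathbb{D}$ and set $\rho=(1-|z|)/2$, so that the closed disk $\overline{D(z,\rho)}$ lies in $\mathbb{D}$.
\item Write $f=u+iv$ and apply the standard gradient estimate for real harmonic functions, $|\nabla U(z)|\le \frac{2}{\rho}\sup_{|w-z|=\rho}|U(w)-U(z)|$, to $U=u$ and $U=v$ separately. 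Each oscillation over the circle is at most $C\rho^\beta$ by step 2, so $|\nabla u(z)|$ and $|\nabla v(z)|$ are both $O(\rho^{\beta-1})$.
\item Since $|f_z|,|f_{\bar z}|\le \tfrac12(|\nabla u|+|\nabla v|)$, we get
$$\mathscr{M}_f(z)=|f_z(z)|+|f_{\bar z}(z)|\le C\rho^{\beta-1}\le C(1-|z|)^{\beta-1}.$$
\end{enumerate}

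\emph{Sufficiency.} Suppose $\mathscr{M}_f(z)\le C(1-|z|)^{\beta-1}$. This holds for $|z|\ge r_0$ by hypothesis, and for all $z\in\mathbb{D}$ after enlarging $C$, since $\mathscr{M}_f$ is bounded on the compact set $|z|\le r_0$.

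Take $e^{i\theta_1},e^{i\theta_2}\in\mathbb{T}$ with $|\theta_1-\theta_2|=h\le\pi$; then $|e^{i\theta_1}-e^{i\theta_2}|\asymp h$. Let $s=\min\{h,1/2\}$. For $0<\epsilon<s$, join $(1-\epsilon)e^{i\theta_1}$ to $(1-\epsilon)e^{i\theta_2}$ by three pieces:
\begin{enumerate}
\item the radial segment from $(1-\epsilon)e^{i\theta_1}$ down to $(1-s)e^{i\theta_1}$;
\item the arc $|\zeta|=1-s$ from angle $\theta_1$ to angle $\theta_2$;
\item the radial segment from $(1-s)e^{i\theta_2}$ up to $(1-\epsilon)e^{i\theta_2}$.
\end{enumerate}
Since $|df|\le\mathscr{M}_f\,|d\zeta|$ along any curve, we estimate each piece:
\begin{enumerate}
\item each radial segment contributes at most $C\int_{\epsilon}^{s}t^{\beta-1}\,dt\le \frac{C}{\beta}s^\beta\le \frac{C}{\beta}h^\beta$;
\item the arc has length at most $h$, and on it $\mathscr{M}_f\le Cs^{\beta-1}$, so it contributes at most $Chs^{\beta-1}$. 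This is at most $Ch^\beta$ when $s=h$, and at most $C'\le C'' h^\beta$ when $s=1/2$ (because then $h\ge1/2$).
\end{enumerate}
Hence
$$|f((1-\epsilon)e^{i\theta_1})-f((1-\epsilon)e^{i\theta_2})|\le C h^\beta$$
uniformly in $\epsilon$. Letting $\epsilon\to0^+$ and using $f\in\mathcal{C}(\overline{\mathbb{D}})$ yields $|f(e^{i\theta_1})-f(e^{i\theta_2})|\le C|e^{i\theta_1}-e^{i\theta_2}|^\beta$, i.e.\ $f\in\Lambda_{\omega_\beta,\infty}(\mathbb{T})$.

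The only nontrivial input is the passage from boundary regularity to interior Hölder regularity in the necessity direction, which is exactly Theorem \ref{H-L}; the rest consists of routine estimates.
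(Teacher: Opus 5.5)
Your proof is correct and complete. The paper gives no proof of Theorem \ref{Thm-oj}; it is quoted from Das and Kaliraj. Your argument follows the scheme the paper indicates for its extension Theorem \ref{thm-1.3} with $p=\infty$. Necessity goes through the Hardy--Littlewood-type theorem (Theorem \ref{H-L}, i.e.\ Theorem \ref{thm-C} for $\omega_\beta$, whose condition \eqref{ttp-1} holds precisely because $\beta<1$), followed by an interior gradient estimate on $D(z,(1-|z|)/2)$. Your radial--arc--radial integration for sufficiency uses only that $t^\beta$ is fast in the sense of \eqref{eq2x}, so that direction holds even for $\beta=1$.
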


In the following, we apply Theorem \ref{thm-1.1} to generalize Theorems \ref{HL-20} and \ref{Thm-oj} to a more general form.
 Before presenting our results, we recall some necessary definitions.

We say that $ f\in\mathscr{H}(\mathbb{D}) $ is a harmonic $ (K, K_{0}) $-quasiregular mapping 
if for every $ z \in \mathbb{D} $, 
$ J_{f}(z) \ge 0 $ and 
$$
\mathscr{M}_{f}^{2}(z) \le K \, J_{f}(z) + K_{0},
$$
where $ K \ge 1 $ and $ K_{0} \ge 0 $ are constants.
In particular, if $K_{0}\equiv0$, then a harmonic $(K,K_{0})$-quasiregular mapping is called a harmonic $K$-quasiregular mapping.
Let $\mathcal{S}_{h}(K,K_{0})$ denote the class of all harmonic $(K,K_{0})$-quasiregular mappings defined in  $\mathbb{D}$.  Clearly, $\mathscr{A}(\mathbb{D}) = \mathcal{S}_{h}(1,0)$.

The second author \cite[Theorem 2.3]{C-2026} generalized Theorems \ref{HL-20} and \ref{Thm-oj} 
for a weight function $\omega$ of Dini type which satisfies some assumption and for $f \in\mathcal{S}_{h}(K,K_{0})\cap \mathcal{C}(\overline{\mathbb{D}})$.
A concave increasing function $\omega$ on $[0,\infty)$ with $\omega(0)=0$ is called
a weight function of Dini type
if there exist constants $\delta_0>0$ and $C>0$ such that \eqref{eq2x} holds for $0<\delta<\delta_0$.
Since a concave increasing function on $[0,\infty)$ with $\omega(0)=0$ is a majorant (see \cite[Lemma 2.2]{A10})
and the proof of \cite[Theorem 2.3]{C-2026} is valid for a fast majorant $\omega$
which satisfies \eqref{N-O-condition},
it can be written as follows.

\begin{Thm}\label{Chen-2026}{\rm (cf. \cite[Theorem 2.3]{C-2026})}
 Let $\omega$ be a fast majorant satisfying the following condition:
 there is a positive constant $C$ such that for all $\delta \in (0,\pi]$,
		\begin{align}\label{N-O-condition}
		\delta \int_{\delta}^{\pi} \frac{\omega(t)}{t^2} \, dt \leq C\omega(\delta).
		\end{align}
Suppose that $f = u + iv\in\mathcal{S}_{h}(K,K_{0})\cap \mathcal{C}(\overline{\mathbb{D}})$, 
where $u$ and $v$ are real-valued functions, $K \geq 1$ and $K_{0} \geq 0$. 
Then, for $p \in [1,\infty]$,
$u \in \Lambda_{\omega,p}(\mathbb{T})$ if and only if	
		\begin{equation*}
			\begin{cases}
				\displaystyle  \left( \int_{0}^{2\pi} \left( \mathscr{M}_{f}\left(z e^{\mathrm{i}\theta}\right) \right)^p \, d\theta \right)^{\frac{1}{p}} \leq C
 \frac{\omega(d_{\mathbb{D}}(z))}{d_{\mathbb{D}}(z)} + C\sqrt{K_{0}}, & \text{if } p\in[1,\infty),\\ 
				\displaystyle \mathscr{M}_{f}(z) \leq C \frac{\omega(d_{\mathbb{D}}(z))}{d_{\mathbb{D}}(z)} + C\sqrt{K_{0}}, & \text{if } p = \infty,
			\end{cases}
		\end{equation*} where $C$ is a positive constant.
\end{Thm}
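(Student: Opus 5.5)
The plan is to prove the two implications separately. The easy direction (integral means of $\mathscr{M}_f$ $\Rightarrow$ $u\in\Lambda_{\omega,p}(\mathbb{T})$) uses only the fast condition \eqref{eq2x}. The other direction uses Theorem \ref{thm-C} together with the slow condition \eqref{N-O-condition} and a pointwise algebraic consequence of quasiregularity. Throughout, $\|\cdot\|_p$ denotes the $L^p$ norm in the rotation variable $\theta$, and Minkowski's integral inequality is used to pass $L^p$ norms inside path integrals. This is what makes the argument uniform in $p\in[1,\infty]$.

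\emph{Sufficiency.} Fix $z_1,z_2\in\overline{\mathbb{D}}$ with $\delta=|z_1-z_2|$. If $\delta\ge 1-\max\{|z_1|,|z_2|\}$ is not small compared with the distances to $\mathbb{T}$, I join $z_j$ radially to $z_j^*=(1-\delta)z_j/|z_j|$ (when $|z_j|>1-\delta$), and then join $z_1^*$ to $z_2^*$ along a circular arc (or a segment) of length $\le C\delta$ lying in $\{|w|\le 1-\delta\}$. Otherwise I use the straight segment. Since $|f(e^{i\theta}z_1)-f(e^{i\theta}z_2)|\le\int_\gamma \mathscr{M}_f(e^{i\theta}\zeta)\,|d\zeta|$, Minkowski's inequality and the hypothesis give two bounds:
\[
\int_{1-\delta}^{1}\Bigl(\frac{\omega(1-r)}{1-r}+\sqrt{K_0}\Bigr)dr\le C\omega(\delta)+C\delta
\]
for the radial pieces, and $C\delta\,\omega(\delta)/\delta$ for the arc, which is where $\omega(t)/t$ non-increasing is used. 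The first integral is controlled by \eqref{eq2x}. As $\delta\le \omega(\delta)\,\pi/\omega(\pi)$, we get $f\in\Lambda_{\omega,p}(\overline{\mathbb{D}})$, with the boundary passage justified by $f\in\mathcal{C}(\overline{\mathbb{D}})$ and Fatou. Restricting to $\mathbb{T}$ and using $|u(a)-u(b)|\le|f(a)-f(b)|$ gives $u\in\Lambda_{\omega,p}(\mathbb{T})$.

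\emph{Necessity.} Write $f=h+\overline{g}$. Then $u=\mathrm{Re}(h+g)$ and $|\nabla u|=|h'+g'|\ge |h'|-|g'|$. Since $J_f=(|h'|-|g'|)\mathscr{M}_f$, quasiregularity gives
\[
\mathscr{M}_f^2\le K\mathscr{M}_f|\nabla u|+K_0 .
\]
Solving this quadratic inequality yields $\mathscr{M}_f\le K|\nabla u|+\sqrt{K_0}$ pointwise. So it suffices to show
\[
\|\nabla u(re^{i\theta})\|_p\le C\,\omega(1-r)/(1-r).
\]
Since $u=\mathcal{P}[u|_{\mathbb{T}}]$ and the derivatives of the Poisson kernel have zero mean, I write
\[
\nabla u(re^{i\theta})=\frac1{2\pi}\int_{-\pi}^{\pi}\nabla_z\mathcal{P}(re^{i\theta},e^{i(\theta+s)})\bigl(u(e^{i(\theta+s)})-u(e^{i\theta})\bigr)ds ,
\]
where the kernel depends only on $(r,s)$ up to a unimodular rotation factor. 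Minkowski's inequality then gives
\[
\|\nabla u_r\|_p\le C\int_{-\pi}^{\pi}\frac{\|\Delta_s u\|_{L^p(\mathbb{T})}}{(1-r)^2+s^2}\,ds\le C\int_0^\pi\frac{\omega(s)}{(1-r)^2+s^2}ds .
\]
Splitting at $s=1-r=:\delta$, the part $s<\delta$ is at most $\delta\,\omega(\delta)/\delta^2=\omega(\delta)/\delta$, and the part $s>\delta$ is at most $\int_\delta^\pi \omega(s)s^{-2}ds\le C\omega(\delta)/\delta$ by \eqref{N-O-condition}. For $r$ small the bound is trivial. Alternatively, I could first invoke Theorem \ref{thm-C} to get $u\in\Lambda_{\omega,p}(\overline{\mathbb{D}})$ and then use a local Cauchy-type gradient estimate on disks of radius $d_{\mathbb{D}}(z)/2$; I would keep that as a backup route.

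The main obstacle is the $L^p$ gradient estimate. One has to justify the kernel representation with the subtracted value, handle $p=\infty$ versus $p<\infty$ uniformly, and check the kernel bound $|\nabla_z\mathcal{P}|\le C/((1-r)^2+s^2)$. The slow condition is used exactly at this point.
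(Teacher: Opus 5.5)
Your proposal is correct in both directions. The sufficiency argument (path integration plus Minkowski's inequality) uses only \eqref{eq2x}. It needs only two routine additions: extend \eqref{eq2x} from $(0,\delta_0)$ to $(0,2]$, and treat $|z_1-z_2|\ge 1/2$ separately using boundedness of $u$. For necessity, the pointwise inequality $\mathscr{M}_f\le K|\nabla u|+\sqrt{K_0}$ follows from $J_f=(|h'|-|g'|)\mathscr{M}_f\le |\nabla u|\,\mathscr{M}_f$, and it correctly reduces the problem to an $L^p$ gradient bound for $u$.

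Where you differ from the paper is in how that gradient bound is obtained. The paper does not reprove this theorem. It relies on the proof of \cite[Theorem 2.3]{C-2026}, which goes through the Hardy--Littlewood-type Theorem \ref{thm-C}. From $u\in\Lambda_{\omega,p}(\mathbb{T})$ and \eqref{N-O-condition} one first gets $u=\mathcal{P}[u|_{\mathbb{T}}]\in\Lambda_{\omega,p}(\overline{\mathbb{D}})$, and then localizes to disks of radius comparable to $d_{\mathbb{D}}(z)$. This is essentially your backup route.

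You instead differentiate the Poisson kernel directly and subtract $u(e^{i\theta})$. You then use $|\nabla_z\mathcal{P}(r,e^{is})|=2/|e^{is}-r|^2\le C/((1-r)^2+s^2)$ for $r\ge 1/2$, so that \eqref{N-O-condition} enters only through the tail $\int_\delta^\pi \omega(s)s^{-2}\,ds$. This is more elementary and self-contained, and it shows exactly where each hypothesis is used: fast for sufficiency, slow for necessity.

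The modular route buys what the paper needs later. Because the slow condition is confined to the Hardy--Littlewood step, replacing Theorem \ref{thm-C} by Theorem \ref{thm-1.1} immediately gives Theorem \ref{thm-1.3}($d_1$) for $p\in(1,\infty)$ with no slow condition. Your direct estimate cannot be upgraded this way. Putting absolute values inside the kernel integral destroys the cancellation: for the fast but not slow majorant $\omega(t)=t$, it yields only a bound $C\log\frac{1}{1-r}$ for the $L^p$ means of $\nabla u$ on $|z|=r$. The true bound is $O(1)$, and recovering it requires the $L^p$-boundedness of the Hilbert transform, as in Lemma \ref{clem-3.2}.
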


By using Theorem \ref{thm-1.1}, we 
can improve Theorems \ref{Thm-oj} and \ref{Chen-2026} into the following form.
We omit the proof of Theorem \ref{thm-1.3}, since it suffices to use an argument similar to that in the proof of \cite[Theorem 2.3]{C-2026}, with Theorem \ref{thm-1.1} in place of Theorem \ref{thm-C}.

\begin{thm}\label{thm-1.3}
 Let $\omega$ be a fast majorant. Suppose that $f = u + iv\in\mathcal{S}_{h}(K,K_{0})\cap \mathcal{C}(\overline{\mathbb{D}})$, where $u$ and $v$ are real-valued functions, $K \geq 1$ and $K_{0} \geq 0$. 
	\begin{enumerate}
		\item[($d_1$)]
		 Let $p \in (1,\infty)$. Then  $u \in \Lambda_{\omega,p}(\mathbb{T})$ if and only if
		\begin{equation*}
		 \left( \int_{0}^{2\pi} \left( \mathscr{M}_{f}\left(z e^{\mathrm{i}\theta}\right) \right)^p \, d\theta \right)^{\frac{1}{p}} \leq C
 \frac{\omega(d_{\mathbb{D}}(z))}{d_{\mathbb{D}}(z)} + C\sqrt{K_{0}}, 
	\end{equation*}  where $C$ is a positive constant.
 \item[($d_2$)]  Let $p \in \{1,\infty\}$. Assume that there is a positive constant $C$ such that for all $\delta \in (0,\pi]$,
		\[
		\delta \int_{\delta}^{\pi} \frac{\omega(t)}{t^2} \, dt \leq C\omega(\delta).
		\]
		Then $u \in \Lambda_{\omega,p}(\mathbb{T})$ if and only if	
		\begin{equation*}
			\begin{cases}
				\displaystyle  \int_{0}^{2\pi}  \mathscr{M}_{f}\left(z e^{\mathrm{i}\theta}\right)  \, d\theta  \leq C \frac{\omega(d_{\mathbb{D}}(z))}{d_{\mathbb{D}}(z)}
 + C\sqrt{K_{0}}, & \text{if } p=1,\\ 
				\displaystyle \mathscr{M}_{f}(z) \leq C \frac{\omega(d_{\mathbb{D}}(z))}{d_{\mathbb{D}}(z)} + C\sqrt{K_{0}}, & \text{if } p = \infty,
			\end{cases}
		\end{equation*} where $C$ is a positive constant.
	
	\end{enumerate}
\end{thm}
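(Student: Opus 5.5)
The statement is Theorem \ref{thm-1.3}, and the plan is to follow the scheme of the proof of \cite[Theorem 2.3]{C-2026}, with Theorem \ref{thm-1.1} used in place of Theorem \ref{thm-C}. This removes the hypothesis \eqref{N-O-condition} when $p\in(1,\infty)$. There are two directions.

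\emph{Necessity.} Assume $u\in\Lambda_{\omega,p}(\mathbb{T})$. Since $u\in\mathcal{C}(\overline{\mathbb{D}})$ is harmonic, $u=\mathcal{P}[u|_{\mathbb{T}}]$.
\begin{enumerate}
\item For $p\in(1,\infty)$, part $(c_1)$ of Theorem \ref{thm-1.1} gives $u\in\Lambda_{\omega,p}(\overline{\mathbb{D}})$ for an arbitrary majorant. For $p\in\{1,\infty\}$, the assumed condition $(b_2)$ gives the same conclusion by $(c_2)$.
\item Fix $z$ and put $r=d_{\mathbb{D}}(z)/2$. Differentiate the Poisson representation of $u(\cdot\, e^{i\theta})-u(z e^{i\theta})$ on the disk of radius $r$ about $z e^{i\theta}$. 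Since the difference has zero mean against the derivative of the kernel, this gives
\[
|\nabla u(z e^{i\theta})|\le \frac{C}{r}\,\frac{1}{2\pi}\int_0^{2\pi}\bigl|u\bigl((z+re^{it})e^{i\theta}\bigr)-u(ze^{i\theta})\bigr|\,dt .
\]
\item Take $L^p(d\theta)$ norms and apply Minkowski's integral inequality. Together with step 1 this yields $\|\nabla u(\cdot\, e^{i\theta})\|_{L^p}\le C\,\omega(r)/r\le C\,\omega(d_{\mathbb{D}}(z))/d_{\mathbb{D}}(z)$, using $\omega(r)\le\omega(2r)$.
\item Pass from $\nabla u$ to $\mathscr{M}_f$ via quasiregularity. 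Write $f=h+\overline{g}$. Then $|\nabla u|=|h'+g'|\ge |h'|-|g'|$ and $J_f=(|h'|-|g'|)(|h'|+|g'|)=(|h'|-|g'|)\mathscr{M}_f$. Hence $\mathscr{M}_f^2\le K J_f+K_0\le K\mathscr{M}_f|\nabla u|+K_0$, which gives $\mathscr{M}_f\le K|\nabla u|+\sqrt{K_0}$.
\item Taking $L^p(d\theta)$ norms of this pointwise bound gives the required inequality, with the constant $C\sqrt{K_0}$ (the factor $(2\pi)^{1/p}$ is absorbed into $C$).
\end{enumerate}

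\emph{Sufficiency.} Assume the integral (or pointwise) bound on $\mathscr{M}_f$. We show $u\in\Lambda_{\omega,p}(\mathbb{T})$ by a Hardy--Littlewood path argument.
\begin{enumerate}
\item Fix $\eta_1,\eta_2$ with $\delta=|e^{i\eta_1}-e^{i\eta_2}|$ small, and set $\rho=1-\delta$.
\item Join $e^{i\eta_1}$ to $\rho e^{i\eta_1}$ radially, then along the arc of radius $\rho$ to $\rho e^{i\eta_2}$, then radially to $e^{i\eta_2}$. Bound $|u(e^{i\theta}e^{i\eta_1})-u(e^{i\theta}e^{i\eta_2})|$ by the integral of $\mathscr{M}_f(\cdot\, e^{i\theta})$ along this path.
\item Take $L^p(d\theta)$ norms and apply Minkowski's inequality. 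Each radial piece contributes at most
\[
\int_{\rho}^{1}\Bigl(C\frac{\omega(1-s)}{1-s}+C\sqrt{K_0}\Bigr)ds\le C\,\omega(\delta)+C\delta .
\]
This is where the fast condition \eqref{eq2x} is used.
\item The arc has length at most $C\delta$, so it contributes $C\delta\,\omega(\delta)/\delta+C\delta\sqrt{K_0}$.
\item Since $\omega(t)/t$ is non-increasing, $\delta\le C\omega(\delta)$ for small $\delta$. Hence the total is at most $C\omega(\delta)$.
\item Large $\delta$ is handled trivially, because $u$ is bounded on $\overline{\mathbb{D}}$ and $\omega(\delta)$ is bounded below there.
\item Passing to the boundary is justified because $u\in\mathcal{C}(\overline{\mathbb{D}})$; use Fatou's lemma or uniform continuity to let the endpoints tend to the circle.
\end{enumerate}

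\emph{Main obstacle.} The main obstacle is step 1 of the necessity direction. Getting $u\in\Lambda_{\omega,p}(\overline{\mathbb{D}})$ for a general fast majorant is exactly the content of Theorem \ref{thm-1.1}$(c_1)$, which we are allowed to use. The only remaining care is to confirm that the mean-value derivative estimate is valid in the $L^p$-rotational sense; this holds because rotation commutes with the Poisson representation.
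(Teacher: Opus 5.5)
Your proposal is correct and follows exactly the route the paper indicates; the paper omits the proof, saying only to rerun the argument behind Theorem~\ref{Chen-2026} with Theorem~\ref{thm-1.1} in place of Theorem~\ref{thm-C}. Concretely, Theorem~\ref{thm-1.1} upgrades $u|_{\mathbb{T}}\in\Lambda_{\omega,p}(\mathbb{T})$ to $u\in\Lambda_{\omega,p}(\overline{\mathbb{D}})$; the Poisson-kernel gradient estimate together with $\mathscr{M}_f\le K|\nabla u|+\sqrt{K_0}$ (obtained from $J_f=(|h'|-|g'|)\mathscr{M}_f$) then gives necessity, and sufficiency is the radial--circular path integration using only $|\nabla u|\le\mathscr{M}_f$ and the fast condition, with all your intermediate details checking out.
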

 
\begin{rem}\label{rem-1.10}
When $K=1$, $K_{0}=0$, $p=\infty$ and $\omega(t)=t^{\beta}~(t\geq0)$ with $\beta\in(0,1)$,
Theorem \ref{thm-1.3} reduces to Theorem  \ref{HL-20}.
\end{rem}

The following result is a Lipschitz characterization of  boundary functions of the moduli of analytic functions and their harmonic extensions.

 \begin{Thm}\label{Dya-2}{\rm(see \cite[Corollary 1 (ii)]{Dy1} \mbox{or} \cite[Theorem B]{P})}
Let $\omega$ be a regular majorant,  $f\in\mathscr{A}(\mathbb{D})\cap \mathcal{C}(\overline{\mathbb{D}})$, and let the boundary function of $|f|$ belong to $\Lambda_{\omega,\infty}(\mathbb{T})$.
Then $f$ is in $\Lambda_{\omega,\infty}(\mathbb{D})$ if and only if 
$$
\mathcal{P}[|f|](z)-|f(z)|\leq\, C\omega(d_{\mathbb{D}}(z))
$$
 for some positive constant $C$.
\end{Thm}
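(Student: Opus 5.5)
The plan is to prove the two implications separately. Necessity uses only the slow condition \eqref{eq3x}. Sufficiency combines Theorem \ref{Th-B} with a Schwarz--Pick argument on small disks, and then uses the fact that $\mathbb{D}$ is a $\Lambda_{\omega,\infty}$-extension domain for fast $\omega$. Throughout, write $d=d_{\mathbb{D}}(z)=1-|z|$.

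\emph{Necessity.} Assume $f\in\Lambda_{\omega,\infty}(\mathbb{D})$, so $f\in\Lambda_{\omega,\infty}(\overline{\mathbb{D}})$. By \eqref{ceq-1.5},
$$
\mathcal{P}[|f|](z)-|f(z)|=\frac{1}{2\pi}\int_0^{2\pi}\mathcal{P}(z,e^{i\eta})\bigl(|f(e^{i\eta})|-|f(z)|\bigr)\,d\eta
\le \frac{C}{2\pi}\int_0^{2\pi}\mathcal{P}(z,e^{i\eta})\,\omega(|e^{i\eta}-z|)\,d\eta .
$$
I then split the circle into the arc $|e^{i\eta}-z|<2d$ and the dyadic shells $2^kd\le|e^{i\eta}-z|<2^{k+1}d$. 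On each piece I use $\mathcal{P}(z,e^{i\eta})\le 2d/|e^{i\eta}-z|^2$ and the monotonicity of $\omega$ and of $\omega(t)/t$. The near arc contributes at most $C\omega(d)$. The shells together contribute at most $Cd\int_d^\infty\omega(t)t^{-2}\,dt$, which is $\le C\omega(d)$ by \eqref{eq3x}; for $d\ge\delta_0$ everything is trivially bounded. This gives the desired inequality.

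\emph{Sufficiency.} Set $U=\mathcal{P}[|f|]$. Since $|f|$ on $\mathbb{T}$ lies in $\Lambda_{\omega,\infty}(\mathbb{T})$ and $\omega$ is regular, Theorem \ref{Th-B} gives $U\in\Lambda_{\omega,\infty}(\overline{\mathbb{D}})$. Fix $z\in\mathbb{D}$ and let $B=\{w:|w-z|<d/2\}$. For $w\in B$, subharmonicity, the Lipschitz property of $U$ and the hypothesis give
$$
|f(w)|\le U(w)\le U(z)+C\omega(d)\le |f(z)|+C\omega(d)=:R .
$$
So $f$ maps $B$ into the disk $\{|\zeta|<R\}$, and $|f(z)|$ sits close to its edge. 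Rescaling to the unit disk and applying the Schwarz--Pick lemma yields
$$
\frac d2\,|f'(z)|\le \frac{R^2-|f(z)|^2}{R}\le 2\bigl(R-|f(z)|\bigr)=2C\omega(d).
$$
Hence $|f'(z)|\le C\omega(d)/d$, and the same bound holds for $\mathscr{M}_f=|f'|$.

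\emph{Integration step.} For $|w-z|<d/2$, integrating along the segment gives $|f(w)-f(z)|\le \tfrac{d}{2}\sup_B|f'|\le C\omega(d)$, because $d_{\mathbb{D}}$ is comparable to $d$ on $B$. Thus $f\in\operatorname{loc}\Lambda_{\omega,\infty}(\mathbb{D})$. Since $\omega$ is fast, $\mathbb{D}$ is a $\Lambda_{\omega,\infty}$-extension domain (\cite{Dy2}), so $f\in\Lambda_{\omega,\infty}(\mathbb{D})$. If one prefers to avoid that citation, one can join two points by the standard radial--circular path and integrate $C\omega(1-|\zeta|)/(1-|\zeta|)$, using \eqref{eq2x}.

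The main obstacle is the Schwarz--Pick step: it requires the upper bound on $|f|$ over the whole disk $B$, not just at $z$. That is exactly why the Lipschitz continuity of $U$ (Theorem \ref{Th-B}, which needs regularity of $\omega$) must be combined with the hypothesis on $U-|f|$. The degenerate case $R=0$ is trivial, since then $f\equiv0$ on $B$.
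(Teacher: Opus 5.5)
Your proof is correct and follows essentially the route the paper points to. The paper does not reprove this cited result, but it describes the argument of \cite{Chen2025} for Theorem \ref{thm-F} in exactly these terms: a Hardy--Littlewood-type theorem for $\mathcal{P}[|f|]$, combined with the Schwarz--Pick estimate \eqref{Schwarz-Pick-quasiregular} (here with $K=1$, $\delta=d/2$), and then the extension property of $\mathbb{D}$ for fast $\omega$. This is also Pavlovi\'c's proof in \cite{P}.

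There is one slip to fix in the integration step. The segment $[z,w]$ has length $|w-z|$, not $d/2$, so the bound you need is $|f(w)-f(z)|\le C|w-z|\,\omega(d)/d\le C\omega(|w-z|)$. The last inequality holds because $\omega(t)/t$ is non-increasing and $|w-z|<d$. The bound $C\omega(d)$ you wrote does not by itself give the local Lipschitz condition. Your radial--circular alternative is unaffected, since it only uses the pointwise bound $|f'(\zeta)|\le C\omega(1-|\zeta|)/(1-|\zeta|)$.
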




By using Theorem \ref{thm-C}, Chen and Hamada \cite{Chen2025}
improved and generalized  Theorem \ref{Dya-2} into the following form.
Note that the definition of $K$-quasiregular mappings in \cite{Chen2025} differs from that in this paper. In \cite{Chen2025}, 
$K$-quasiregular mappings are understood as locally univalent, sense-preserving $K$-quasiregular mappings. Thus, it can be stated as follows.

 \begin{Thm}{\rm (\cite[Theorem 2.13]{Chen2025})}\label{thm-F}
Suppose that $p\in[1,\infty]$, $f\in\mathscr{H}(\mathbb{D})\cap \mathcal{C}(\overline{\mathbb{D}})$ and $\omega$ is a regular majorant.
If the boundary function of $|f|$ is in $\Lambda_{\omega,p}(\mathbb{T})$ and $f$ is a locally univalent sense-preserving
$K$-quasiregular mapping in $\mathbb{D}$, where $K\in[1,\infty)$,
then the following statements are equivalent:

\begin{enumerate}
\item[{\rm ($e_{1}$)}] $f\in\Lambda_{\omega,p}(\mathbb{D})$;
\item[{\rm ($e_{2}$)}] There is a positive constant $C$ such that
 $$
 \left\{
 \begin{array}{ll}
 \left(\int_{0}^{2\pi}\left(\mathcal{P}[|f|](ze^{{\rm i}\eta})-|f(ze^{{\rm i}\eta})|\right)^{p}d\eta\right)^{\frac{1}{p}}
\leq C\omega(d_{\mathbb{D}}(z)), &p\in[1,\infty),
\\
\mathcal{P}[|f|](z)-|f(z)|\leq\, C\omega(d_{\mathbb{D}}(z)), &p=\infty.
\end{array}
\right.
$$
\end{enumerate}
\end{Thm}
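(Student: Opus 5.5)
The plan is to compare $|f|$ with its harmonic majorant $U:=\mathcal{P}[|f|]$. First, Theorem \ref{thm-C} applies because $\omega$ is regular, so (\ref{ttp-1}) holds. Since $|f|\in\Lambda_{\omega,p}(\mathbb{T})\cap\mathcal{C}(\mathbb{T})$, it gives $U\in\Lambda_{\omega,p}(\overline{\mathbb{D}})$. Second, $|f|$ is subharmonic and continuous on $\overline{\mathbb{D}}$, so $U-|f|\ge 0$ in $\mathbb{D}$. The two implications then reduce to comparing $\mathscr{L}_p$-type quantities for $U$ and $|f|$.

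\emph{($e_1$)$\Rightarrow$($e_2$).} Fix $z\in\mathbb{D}$ with $r=|z|$, and put $\zeta=z/|z|$, so $|z-\zeta|=d_{\mathbb{D}}(z)$. For every $\eta$, using $U=|f|$ on $\mathbb{T}$,
$$0\le U(ze^{i\eta})-|f(ze^{i\eta})|\le |U(ze^{i\eta})-U(\zeta e^{i\eta})|+|f(\zeta e^{i\eta})-f(ze^{i\eta})|.$$
Take the $L^p(d\eta)$ norm and apply Minkowski's inequality. The first term is bounded by $\|U\|_{\Lambda_{\omega,p}(\overline{\mathbb{D}}),s}\,\omega(d_{\mathbb{D}}(z))$, and the second by $\|f\|_{\Lambda_{\omega,p}(\mathbb{D}),s}\,\omega(d_{\mathbb{D}}(z))$, extended to $\overline{\mathbb{D}}$ by continuity. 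This gives ($e_2$); the case $p=\infty$ is the same without the integral.

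\emph{($e_2$)$\Rightarrow$($e_1$).} This is the substantive direction. The key local estimate I aim for is
$$\mathscr{M}_f(z)\le \frac{C(K)}{d}\,\sup_{w\in D(z,d/2)}\bigl||f(w)|-|f(z)|\bigr|,\qquad d=d_{\mathbb{D}}(z).$$
To get it, I would use that for a sense-preserving $K$-quasiregular harmonic map, $|\nabla|f||\ge |f_z|-|f_{\bar z}|\ge \mathscr{M}_f/K$ wherever $f\ne0$. Combined with a Koebe/Harnack-type argument on the disk $D(z,d/2)$ for the locally univalent harmonic quasiregular map, this yields a lower bound for the oscillation of $|f|$ by $d\,\mathscr{M}_f(z)$. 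This is the step I expect to be the main obstacle. I would try first to pass to $f\circ\phi$ with $\phi$ the affine map of $\mathbb{D}$ onto $D(z,d/2)$, and use the distortion theorem for normalized $K$-quasiconformal harmonic maps to bound $|f(w)-f(z)|$ from below on $|w-z|=d/4$.

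Granting the local estimate, write $|f|=U-(U-|f|)$. Then
$$\sup_{D(z,d/2)}\bigl||f(w)|-|f(z)|\bigr|\le \sup_{D(z,d/2)}|U(w)-U(z)|+2\sup_{D(z,d/2)}(U-|f|).$$
Apply this at each point $ze^{i\eta}$ and take $L^p(d\eta)$ norms. To control the suprema over disks I would replace each by an average over a slightly larger disk $D(z,3d/4)$, using the mean value inequality for the subharmonic quantities $|U(\cdot)-U(z)|$ and $U-|f|\ge 0$ (the latter is handled via Harnack-type comparison applied to $U$ together with the subharmonicity of $|f|$). Then Minkowski's inequality, together with rotation invariance of $d\eta$, converts these averages into $\mathscr{L}_p[U]$ and the left side of ($e_2$) at points with $d_{\mathbb{D}}(w)\asymp d$. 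Both are $\le C\omega(d)$, using $\omega(2t)\le 2\omega(t)$. Hence
$$\Bigl(\int_0^{2\pi}\mathscr{M}_f(ze^{i\eta})^p\,d\eta\Bigr)^{1/p}\le C\frac{\omega(d_{\mathbb{D}}(z))}{d_{\mathbb{D}}(z)}.$$

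Finally, integrate this along a standard path in $\mathbb{D}$ joining $z_1,z_2$: go radially inward to depth $\asymp|z_1-z_2|$, then along an arc, then radially outward. Minkowski's integral inequality moves the $L^p(d\eta)$ norm inside the path integral. Since $\omega$ is fast, (\ref{eq2x}) bounds the radial pieces by $C\omega(|z_1-z_2|)$, and the arc piece is bounded by $|z_1-z_2|\cdot\omega(|z_1-z_2|)/|z_1-z_2|$. This gives $f\in\Lambda_{\omega,p}(\mathbb{D})$.
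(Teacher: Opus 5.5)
Your overall architecture is right: the harmonic majorant $U=\mathcal{P}[|f|]$, Theorem \ref{thm-C} to place $U$ in $\Lambda_{\omega,p}(\overline{\mathbb{D}})$, a local bound for $\mathscr{M}_f$, and integration along paths using that $\omega$ is fast. Your proof of ($e_1$)$\Rightarrow$($e_2$), the averaging bound for $\sup_{D(z,d/2)}|U(\cdot)-U(z)|$, and the path argument are fine.

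\textbf{The local estimate.} You rightly flag this as the crux, but the route you propose does not work. Since $f$ is only locally univalent, $f\circ\phi$ need not be univalent, so no distortion theorem is available. For example, $f(w)=e^{8\pi w}$ is analytic, locally univalent and $1$-quasiregular. Yet with $z=0$ and $d=1$ it has $f(i/4)=f(0)$, so $|f(w)-f(z)|$ has no lower bound on $|w-z|=d/4$. Nor can the pointwise bound $|\nabla|f||\ge\mathscr{M}_f/K$ at $z$ be propagated over a disk of radius $\asymp d$.

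The missing idea is to apply Harnack's inequality not to $|f|$ but to a harmonic function; the paper indicates that the proof in \cite{Chen2025} rests on this. Take $h=M_{z,\delta}-\mathrm{Re}(\bar\lambda f)$ on $D(z,\delta)$, where $\lambda=f(z)/|f(z)|$ (any unimodular $\lambda$ if $f(z)=0$); it is nonnegative and harmonic. Then:
\begin{itemize}
\item $h(z)=M_{z,\delta}-|f(z)|$ and, by Harnack, $|\nabla h(z)|\le 2h(z)/\delta$;
\item $|\nabla h(z)|=|\bar\lambda f_z+\lambda\overline{f_{\bar z}}|\ge|f_z|-|f_{\bar z}|\ge\mathscr{M}_f(z)/K$.
\end{itemize}
Combining these gives exactly the one-sided inequality \eqref{Schwarz-Pick-quasiregular}.

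\textbf{Why one-sidedness matters.} Your two-sided oscillation forces you to control $\sup_{D(ze^{i\eta},d/2)}(U-|f|)$ inside the $L^p(d\eta)$ norm, and your justification fails. $U-|f|$ is superharmonic, not subharmonic. So the mean value inequality bounds its averages by its centre value, not its supremum by averages. Likewise, subharmonicity of $|f|$ bounds $|f(w)|$ from above, not below. For $p<\infty$, a supremum over disks inside the $L^p(d\eta)$ norm is not controlled by ($e_2$) without a further argument.

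With \eqref{Schwarz-Pick-quasiregular} this term never arises. Since $|f(w)|\le U(w)$, one has $M_{z,\delta}-|f(z)|\le\sup_{|w-z|<\delta}(U(w)-U(z))+(U(z)-|f(z)|)$. So only the centre value of $U-|f|$ is needed, and ($e_2$) gives that directly.

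If you prefer to keep your formulation, note that $U-|f|\le U-\mathrm{Re}(\bar\lambda f)$ with equality at $z$. Harnack for this nonnegative harmonic function on $D(z,d)$ then gives $\sup_{D(z,d/2)}(U-|f|)\le 3\,(U(z)-|f(z)|)$. With either repair, the rest of your argument goes through.
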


In view of the proof of Theorem \ref{thm-F} in \cite{Chen2025}, 
the assumption on $f\in \mathscr{H}(\mathbb{D})$ that it is a locally univalent sense-preserving $K$-quasiregular mapping in $\mathbb{D}$
can be weakened to the following: 
for $z\in \mathbb{D}$ and $ \delta\in(0,1-|z|)$,
\begin{align}\label{Schwarz-Pick-quasiregular}
\mathscr{M}_f(z)\leq \frac{2K(M_{z,\delta}-|f(z)|)}{\delta},
\end{align}
where $M_{z,\delta}=\sup\{ |f(w)|: |w-z|<\delta\}$.

Since $f\in\mathscr{A}(\mathbb{D})$
satisfies \eqref{Schwarz-Pick-quasiregular} for $K=1$
by the Schwarz-Pick lemma (cf. \cite[Section 1.2]{ahl} and \cite[Lemma 3.3]{Chen2025}),
from Theorem \ref{thm-F} and the above argument, we obtain the following result immediately.

\begin{Core}{\rm (\cite[Corollary  2.15]{Chen2025})}\label{yy-cor}
Suppose that $p\in[1,\infty]$, $f\in\mathscr{A}(\mathbb{D})\cap \mathcal{C}(\overline{\mathbb{D}})$ and $\omega$ is a regular majorant.
If the boundary function of $|f|$ is in $\Lambda_{\omega,p}(\mathbb{T})$, then the following statements are equivalent:

\begin{enumerate}
\item[{\rm ($f_{1}$)}] $f\in\Lambda_{\omega,p}(\mathbb{D})$;
\item[{\rm ($f_{2}$)}] There is a positive constant $C$ such that
$$
 \left\{
 \begin{array}{ll}
 \left(\int_{0}^{2\pi}\left(\mathcal{P}[|f|](ze^{{\rm i}\eta})-|f(ze^{{\rm i}\eta})|\right)^{p}d\eta\right)^{\frac{1}{p}}
\leq C\omega(d_{\mathbb{D}}(z)), &p\in[1,\infty),
\\
\mathcal{P}[|f|](z)-|f(z)|\leq C\omega(d_{\mathbb{D}}(z)), &p=\infty.
\end{array}
\right.
$$
\end{enumerate}
\end{Core}




In the case $p\in (1,\infty)$,
by using Theorem \ref{thm-1.1}, we improve and generalize Theorem \ref{thm-F} into the following form.
We omit the proof of Theorem \ref{thm-1.5},
because it suffices to use an argument similar to that in the proof of \cite[Theorem 2.13]{Chen2025} and Theorem \ref{thm-1.1}
 instead of  Theorem \ref{thm-C}.

 \begin{thm}\label{thm-1.5}
Suppose that $p\in(1,\infty)$, $f\in\mathscr{H}(\mathbb{D})\cap \mathcal{C}(\overline{\mathbb{D}})$ and $\omega$ is a majorant.
Assume that the boundary function of $|f|$ is in $\Lambda_{\omega,p}(\mathbb{T})$.
We consider 
the following statements:
\begin{enumerate}
\item[{\rm ($g_{1}$)}] $f\in\Lambda_{\omega,p}(\mathbb{D})$;
\item[{\rm ($g_{2}$)}] There is a positive constant $C$ such that
 $$
 \left(\int_{0}^{2\pi}\left(\mathcal{P}[|f|](ze^{{\rm i}\eta})-|f(ze^{{\rm i}\eta})|\right)^{p}d\eta\right)^{\frac{1}{p}}
\leq C\omega(d_{\mathbb{D}}(z)).
$$
\end{enumerate}
Then, $(g_{1}) \Rightarrow (g_{2})$.
Conversely, if $\omega$ is fast and $f$
 satisfies \eqref{Schwarz-Pick-quasiregular}
for $z\in \mathbb{D}$ and $0<\delta<1-|z|$, where $K\in[1,\infty)$ is a constant,
then $(g_{2}) \Rightarrow (g_{1})$.

\end{thm}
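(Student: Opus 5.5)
We follow the scheme of the proof of \cite[Theorem 2.13]{Chen2025}, using Theorem \ref{thm-1.1} $(c_1)$ in place of Theorem \ref{thm-C}. The reason this works is that for $p\in(1,\infty)$ the Poisson extension preserves $\Lambda_{\omega,p}$ for every majorant. So the slow condition (\ref{eq3x}), which was needed in \cite{Chen2025}, disappears from the direction $(g_1)\Rightarrow(g_2)$. The fast condition survives only in the converse, where it is used to integrate the derivative bound back up.

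\emph{Step 1: $(g_1)\Rightarrow(g_2)$.} Let $\varphi$ be the boundary function of $|f|$. By hypothesis $\varphi\in\Lambda_{\omega,p}(\mathbb{T})\cap\mathcal{C}(\mathbb{T})$. Theorem \ref{thm-1.1} $(c_1)$ then gives $U:=\mathcal{P}[\varphi]\in\Lambda_{\omega,p}(\overline{\mathbb{D}})$. Fix $z\in\mathbb{D}$ with $z\neq0$ (the case $z=0$ is trivial) and put $\zeta=z/|z|\in\mathbb{T}$. Since $U=|f|$ on $\mathbb{T}$,
\[
U(ze^{i\eta})-|f(ze^{i\eta})| \le |U(ze^{i\eta})-U(\zeta e^{i\eta})| + \bigl|\,|f(\zeta e^{i\eta})|-|f(ze^{i\eta})|\,\bigr|.
\]
The second term is at most $|f(\zeta e^{i\eta})-f(ze^{i\eta})|$. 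The left-hand side is nonnegative because $|f|$ is subharmonic, so $|f|\le\mathcal{P}[|f|]$. Take $L^p(d\eta)$ norms and apply Minkowski's inequality. The first term is bounded by $\|U\|_{\Lambda_{\omega,p}(\overline{\mathbb{D}}),s}\,\omega(1-|z|)$ and the second by $\|f\|_{\Lambda_{\omega,p}(\overline{\mathbb{D}}),s}\,\omega(1-|z|)$. Here we use that a function in $\Lambda_{\omega,p}(\mathbb{D})$ continuous on $\overline{\mathbb{D}}$ lies in $\Lambda_{\omega,p}(\overline{\mathbb{D}})$, by Fatou's lemma. This yields $(g_2)$ with $d_{\mathbb{D}}(z)=1-|z|$.

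\emph{Step 2: $(g_2)\Rightarrow(g_1)$ for fast $\omega$, starting with a derivative estimate.} Fix $z$ and put $\delta=d_{\mathbb{D}}(z)/2$. Apply \eqref{Schwarz-Pick-quasiregular} at each rotated point $ze^{i\eta}$, and write $M_{z,\eta}:=M_{ze^{i\eta},\delta}$. This gives
\[
\mathscr{M}_f(ze^{i\eta})\le \frac{2K}{\delta}\Bigl(M_{z,\eta}-U(ze^{i\eta})\Bigr)+\frac{2K}{\delta}\Bigl(U(ze^{i\eta})-|f(ze^{i\eta})|\Bigr).
\]
The second bracket has $L^p$ norm at most $C\omega(\delta)$ by $(g_2)$. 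For the first bracket, use $|f|\le U$ to get $M_{z,\eta}-U(ze^{i\eta})\le \sup_{|w-ze^{i\eta}|<\delta}|U(w)-U(ze^{i\eta})|$. We need the $L^p(d\eta)$ norm of this local oscillation of the harmonic function $U$ to be at most $C\omega(\delta)$.

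This is the step I expect to be the main obstacle, since the supremum sits inside the integral. I would try the following. Cover the disc $\{|w-ze^{i\eta}|<\delta\}$ by the concentric disc of radius $3\delta/2$, which stays at distance at least $\delta/2$ from $\mathbb{T}$. Use the mean value property on discs of radius $\delta/2$ to bound $|U(w)-U(ze^{i\eta})|$ by $C\delta^{-2}\int_{|\xi|<2\delta}|U(ze^{i\eta}+\xi)-U(ze^{i\eta})|\,dA(\xi)$. Then Minkowski's integral inequality moves the $L^p(d\eta)$ norm inside the area integral. Each integrand has $L^p$ norm at most $C\omega(2\delta)\le 2C\omega(\delta)$, by the rotation-invariant estimate $U\in\Lambda_{\omega,p}(\overline{\mathbb{D}})$. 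One technicality is that $\mathscr{L}_p$ is defined along rotations $e^{i\eta}z_1,e^{i\eta}z_2$. This is fine, because $ze^{i\eta}+\xi = e^{i\eta}(z+\xi e^{-i\eta})$, and we may first average over $\xi$ at fixed $\eta$, or equivalently substitute $\xi=e^{i\eta}\xi'$. The result is
\[
\Bigl(\int_0^{2\pi}\mathscr{M}_f(ze^{i\eta})^p\,d\eta\Bigr)^{1/p}\le C\,\frac{\omega(d_{\mathbb{D}}(z))}{d_{\mathbb{D}}(z)}.
\]

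\emph{Step 3: integrating the derivative bound.} Given $z_1,z_2\in\mathbb{D}$, join them by the standard Gehring--Martio path. The path goes radially inward to radius $1-|z_1-z_2|$, then along an arc, then radially outward; if the points are far from the boundary, a straight segment suffices. Integrate $\mathscr{M}_f$ along the rotated paths and apply Minkowski's inequality together with the $L^p$ derivative bound. The radial parts contribute $\int_0^{|z_1-z_2|}\omega(t)/t\,dt\le C\omega(|z_1-z_2|)$, by the fast condition (\ref{eq2x}). The arc part contributes $|z_1-z_2|\cdot\omega(|z_1-z_2|)/|z_1-z_2|$. Together these give $(g_1)$.
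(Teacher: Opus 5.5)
Your proposal is correct and follows the route the paper indicates. The paper omits this proof and simply runs the argument of \cite[Theorem 2.13]{Chen2025} with Theorem \ref{thm-1.1}~$(c_1)$ in place of Theorem \ref{thm-C}; your mean-value, rotation-substitution and Minkowski treatment of the local oscillation of $\mathcal{P}[|f|]$, and your integration of $\mathscr{M}_f$ along rotated paths under the fast condition, are a sound way to fill in the details the paper leaves to that reference.
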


From Theorem \ref{thm-1.5}, we obtain the following result immediately.

\begin{cor}\label{cor-1.6}
Suppose that $p\in(1,\infty)$, $f\in\mathscr{A}(\mathbb{D})\cap \mathcal{C}(\overline{\mathbb{D}})$ and $\omega$ is a majorant.
Assume that the boundary function of $|f|$ is in $\Lambda_{\omega,p}(\mathbb{T})$.
We consider 
the following statements.

\begin{enumerate}
\item[{\rm ($h_{1}$)}] $f\in\Lambda_{\omega,p}(\mathbb{D})$;
\item[{\rm ($h_{2}$)}] There is a positive constant $C$ such that
$$
 \left(\int_{0}^{2\pi}\left(\mathcal{P}[|f|](ze^{{\rm i}\eta})-|f(ze^{{\rm i}\eta})|\right)^{p}d\eta\right)^{\frac{1}{p}}
\leq C\omega(d_{\mathbb{D}}(z)).
$$
\end{enumerate}
Then, $(h_{1}) \Rightarrow (h_{2})$.
Conversely, if $\omega$ is fast,
then $(h_{2}) \Rightarrow (h_{1})$.
\end{cor}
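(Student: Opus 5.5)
The corollary will follow directly from Theorem \ref{thm-1.5} once we check that an analytic $f$ satisfies its hypotheses. First, $\mathscr{A}(\mathbb{D})\subset\mathscr{H}(\mathbb{D})$. So for $f\in\mathscr{A}(\mathbb{D})\cap\mathcal{C}(\overline{\mathbb{D}})$, $p\in(1,\infty)$, an arbitrary majorant $\omega$, and boundary function of $|f|$ in $\Lambda_{\omega,p}(\mathbb{T})$, all the standing assumptions of Theorem \ref{thm-1.5} hold. Its first conclusion, $(g_{1})\Rightarrow(g_{2})$, reads verbatim as $(h_{1})\Rightarrow(h_{2})$, since ($g_{i}$) and ($h_{i}$) are literally the same statements. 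This direction needs no condition on $\omega$.

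For the converse, assume in addition that $\omega$ is fast. Theorem \ref{thm-1.5} then gives $(g_{2})\Rightarrow(g_{1})$, provided $f$ satisfies \eqref{Schwarz-Pick-quasiregular} for some constant $K\ge1$, for all $z\in\mathbb{D}$ and $0<\delta<1-|z|$.

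The only point to verify is that analytic $f$ satisfies \eqref{Schwarz-Pick-quasiregular} with $K=1$. Since $f$ is analytic, $f_{\bar z}=0$, so $\mathscr{M}_f(z)=|f'(z)|$. Fix $z\in\mathbb{D}$ and $\delta\in(0,1-|z|)$, and set $M=M_{z,\delta}$. If $M=\infty$ the inequality is trivial; we may in fact assume $M<\infty$, since $f$ is bounded on $\overline{\mathbb{D}}$. If $M=|f(z)|$, the maximum principle makes $f$ constant on the disk $|w-z|<\delta$, so both sides vanish. Otherwise consider
\[
F(\zeta)=\frac{f(z+\delta\zeta)}{M},
\]
which maps $\mathbb{D}$ into $\mathbb{D}$. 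The Schwarz-Pick lemma gives $|F'(0)|\le 1-|F(0)|^2$. Multiplying by $M$ and dividing by $\delta$,
\[
|f'(z)|\le\frac{M^2-|f(z)|^2}{M\delta}=\frac{(M+|f(z)|)(M-|f(z)|)}{M\delta}\le\frac{2(M-|f(z)|)}{\delta}.
\]
This is \eqref{Schwarz-Pick-quasiregular} with $K=1$, as already noted in the text via \cite[Lemma 3.3]{Chen2025}.

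Applying Theorem \ref{thm-1.5} with $K=1$ then yields $(h_{2})\Rightarrow(h_{1})$ for fast $\omega$. There is no genuine obstacle here: all the analytic difficulty sits inside Theorem \ref{thm-1.5}, and through it in Theorem \ref{thm-1.1}$(c_{1})$.
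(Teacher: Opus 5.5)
Your proposal is correct and is the paper's argument: the corollary follows directly from Theorem \ref{thm-1.5} with $K=1$, since the Schwarz--Pick lemma shows that every analytic $f$ satisfies \eqref{Schwarz-Pick-quasiregular}, as the paper notes just after that inequality. Your explicit rescaling argument, including the degenerate case $M_{z,\delta}=|f(z)|$, spells out the step the paper delegates to \cite[Lemma 3.3]{Chen2025}.
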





Let $\phi$ be an analytic function of $\mathbb{D}$
into itself. We define
the composition operator $C_{\phi}$ on $\mathscr{H}(\mathbb{D})$ by $C_{\phi}(f)(z)=f(\phi(z))$ for $z\in\mathbb{D}$. For two sets  $X$,
$Y\subset \mathscr{H}(\mathbb{D})$, the set of all $\phi$ for which $C_{\phi}(X)\subset Y$ is
denoted by $\mathscr{F}(X,Y)$. 

In the following, for $x\in\mathbb{R}$, let $$\{x\}_{+}=\max\{x,0\}.$$
As another application of Theorem \ref{thm-1.1}, we obtain the following result.

\begin{thm}\label{thm-1.8}
Suppose that $p\in(1,\infty)$,   $\omega_{1}$ and $\omega_{2}$ are two fast majorants.
Let $\phi$ be an analytic function of $\mathbb{D}$ into itself.
Then the following conditions are equivalent.
\begin{enumerate}
\item[{\rm $(i_{1})$}] $\phi\in\mathscr{F}\big(\Lambda_{\omega_{1},\infty}(\mathbb{D})\cap\mathscr{H}(\mathbb{D}),\Lambda_{\omega_{2},p}(\mathbb{D})\cap\mathscr{H}(\mathbb{D})\big)$;

\item[{\rm $(i_{2})$}] There is a positive constant $C$ such that, for all $z\in\mathbb{D}$,
$$\left(\int_{0}^{2\pi}\left(|\phi'(ze^{{\rm i}\theta})|
\frac{\omega_{1}\big(d_{\mathbb{D}}(\phi(ze^{{\rm i}\theta}))\big)}{d_{\mathbb{D}}(\phi(ze^{{\rm i}\theta}))}\right)^{p}d\theta\right)^{\frac{1}{p}}
\leq C\frac{\omega_{2}\big(d_{\mathbb{D}}(z)\big)}{d_{\mathbb{D}}(z)};$$
\end{enumerate}
Furthermore, assume that
$\phi\in \mathcal{C}(\overline{\mathbb{D}})$ and
$\omega_{1}\in\mathscr{S}$, where $\mathscr{S}$ denotes the set consisting of those majorants $\omega_{1}$
for which $\omega_{1}$  are differentiable on $(0,1]$ and $\omega_{1}'$
  are also non-increasing on $(0,1]$ with $\sup_{t\in (0,1]}\frac{\omega_1(t)}{t\omega_1'(t)}<\infty$.
Then, the conditions {\rm $(i_{1})$} and {\rm $(i_{2})$} are equivalent to the conditions
 $(i_{3})$$\sim$$(i_{5})$.
\begin{enumerate}
\item[{\rm $(i_{3})$}]  $\omega_{1}(d_{\mathbb{D}}(\phi))\in\Lambda_{\omega_{2},p}(\overline{\mathbb{D}});$

\item[{\rm $(i_{4})$}] $\omega_{1}(d_{\mathbb{D}}(\phi))\in\Lambda_{\omega_{2},p}(\mathbb{T})$ and there is a positive constant $C$
such that, for all $r\in(0,1)$,
$$\left(\int_{0}^{2\pi}\left\{\omega_{1}(d_{\mathbb{D}}(\phi(re^{{\rm i}\theta})))-\omega_{1}(d_{\mathbb{D}}(\phi(e^{{\rm i}\theta})))\right\}_{+}^{p}d\theta\right)^{\frac{1}{p}}
\leq\,C\omega_{2}(d_{\mathbb{D}}(r));$$
\item[{\rm $(i_{5})$}]
$\omega_{1}(d_{\mathbb{D}}(\phi))\in\Lambda_{\omega_{2},p}(\mathbb{T})$ and there is a positive constant $C$
such that, for all $r\in(0,1)$,
$$\left(\int_{0}^{2\pi}\left(\omega_{1}(d_{\mathbb{D}}(\phi(re^{{\rm i}\theta})))-\mathcal{P}[\omega_{1}(d_{\mathbb{D}}(\phi))](re^{{\rm i}\theta})\right)^{p}d\theta\right)^{\frac{1}{p}}
\leq\,C\omega_{2}(d_{\mathbb{D}}(r)).$$
\end{enumerate}
\end{thm}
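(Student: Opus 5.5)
We need to plan the proof of Theorem \ref{thm-1.8}. The strategy is to reduce everything to a derivative (Bloch-type) characterization of $\Lambda_{\omega,p}(\mathbb{D})\cap\mathscr{H}(\mathbb{D})$ for fast majorants. Concretely, we first record two facts. The first is the classical one: for a fast majorant $\omega_1$, $f\in\Lambda_{\omega_1,\infty}(\mathbb{D})\cap\mathscr{H}(\mathbb{D})$ if and only if $\mathscr{M}_f(z)\le C\omega_1(d_{\mathbb{D}}(z))/d_{\mathbb{D}}(z)$. Necessity follows from the Cauchy/Poisson estimate on the disk of radius $d_{\mathbb{D}}(z)/2$. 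Sufficiency follows by integrating along the Lappalainen-type curves, using that $\mathbb{D}$ is a $\Lambda_{\omega,\infty}$-extension domain for fast $\omega$. The second is its integral analogue: for a fast majorant $\omega_2$ and $p\in[1,\infty)$, $F\in\mathscr{H}(\mathbb{D})$ lies in $\Lambda_{\omega_2,p}(\mathbb{D})$ if and only if
$$\Big(\int_0^{2\pi}\mathscr{M}_F(ze^{i\theta})^p\,d\theta\Big)^{1/p}\le C\frac{\omega_2(d_{\mathbb{D}}(z))}{d_{\mathbb{D}}(z)}.$$
Necessity comes from the subharmonic mean-value estimate of $\mathscr{M}_F$ on small disks combined with Minkowski's inequality in the rotation variable. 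Sufficiency comes from integrating radially and along arcs, together with Minkowski's inequality and the fastness condition (\ref{eq2x}). This is essentially the $K=1,K_0=0$ content underlying Theorem \ref{thm-1.3}.

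\emph{Proof of $(i_2)\Rightarrow(i_1)$.} For $f\in\Lambda_{\omega_1,\infty}(\mathbb{D})\cap\mathscr{H}(\mathbb{D})$ we have $\mathscr{M}_{f\circ\phi}(z)=\mathscr{M}_f(\phi(z))|\phi'(z)|$, since $\phi$ is analytic. The first fact then gives
$$\mathscr{M}_{f\circ\phi}(z)\le C|\phi'|\,\frac{\omega_1(d(\phi))}{d(\phi)}.$$
Combining $(i_2)$ with the sufficiency part of the second fact yields $f\circ\phi\in\Lambda_{\omega_2,p}(\mathbb{D})$.

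\emph{Proof of $(i_1)\Rightarrow(i_2)$.} Apply $(i_1)$ to suitable test functions. For $a\in\mathbb{D}$, a natural choice is an analytic function $f_a$ with $\|f_a\|_{\Lambda_{\omega_1,\infty}}\le C$ uniformly and $|f_a'(w)|\gtrsim \omega_1(d(w))/d(w)$ near $a$. Since a single test function cannot be large everywhere, we instead build two (or finitely many) fixed functions $f_1,f_2\in\Lambda_{\omega_1,\infty}(\mathbb{D})$ with
$$|f_1'(w)|+|f_2'(w)|\ge c\,\omega_1(d(w))/d(w)\quad\text{for all } w\in\mathbb{D},$$
for instance lacunary series $\sum_k \omega_1(2^{-k})z^{2^k}$, in the spirit of Ramey–Ullrich. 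A closed-graph / uniform boundedness argument then gives norm control of $C_\phi$. Applying the necessity part of the second fact to $f_j\circ\phi$ and summing yields $(i_2)$.

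Constructing these test functions for a general fast majorant, without further regularity of $\omega_1$, is the step I expect to be the main obstacle. If the lacunary construction fails, I would fall back on the extremal functions $\omega_1$-integrated Cauchy kernels $f_a(z)=\int_0^{z}\frac{\omega_1(1-|a|)}{(1-|a|)}\frac{(1-|a|)^{\,}}{(1-\bar a\zeta)}d\zeta$ type, tested pointwise with uniform bounds.

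\emph{Proof of the equivalences with $(i_3)$–$(i_5)$, under $\phi\in\mathcal{C}(\overline{\mathbb{D}})$ and $\omega_1\in\mathscr{S}$.} Set $u=\omega_1(1-|\phi|)$, a continuous function on $\overline{\mathbb{D}}$. The condition $\omega_1\in\mathscr{S}$ makes $\omega_1'(t)\asymp\omega_1(t)/t$. Hence $|\nabla u|\asymp |\phi'|\,\omega_1(d(\phi))/d(\phi)$ off the zeros of $\phi$, so that $(i_2)$ is exactly the $L^p$-gradient condition for $u$. Moreover, $u$ is superharmonic-like: the concavity of $\omega_1$ and the subharmonicity of $|\phi|$ give $u\ge\mathcal{P}[u]$ up to controlled error.

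We then establish the chain as follows. First, $(i_2)\Rightarrow(i_3)$ follows from the same integration argument as in the sufficiency part of the second fact, applied to $u$. Next, $(i_3)\Rightarrow(i_4)$ is immediate by restricting to $\mathbb{T}$ and comparing $re^{i\theta}$ with $e^{i\theta}$. For $(i_4)\Rightarrow(i_5)$, we use Theorem \ref{thm-1.1}$(c_1)$: since $p\in(1,\infty)$, $\mathcal{P}[u]\in\Lambda_{\omega_2,p}(\overline{\mathbb{D}})$. This gives
$$\big\|u(r\cdot)-\mathcal{P}[u](r\cdot)\big\|_p\le\big\|\{u(r\cdot)-u(\cdot)\}_+\big\|_p+\big\|u-\mathcal{P}[u](r\cdot)\big\|_p,$$
and the last term is at most $C\omega_2(1-r)$. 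This is precisely where the absence of the slow condition in Theorem \ref{thm-1.1} is essential. Finally, $(i_5)\Rightarrow(i_2)$ mimics the argument for Theorem \ref{thm-1.5}. Apply the Schwarz–Pick-type gradient estimate to the analytic $\phi$, bounding $|\nabla u|$ at $z$ by $(u(z)-\mathcal{P}[u](z))/d(z)$ plus the gradient of $\mathcal{P}[u]$. The latter is controlled by the $\Lambda_{\omega_2,p}$ norm of $\mathcal{P}[u]$ obtained from Theorem \ref{thm-1.1}. Taking $L^p$ norms over rotations completes the proof.
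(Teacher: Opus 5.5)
\textbf{The gap is in $(i_1)\Rightarrow(i_2)$}, at exactly the step you flag, and neither of your proposed constructions closes it.

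For $f(z)=\sum_k\omega_1(2^{-k})z^{2^k}$ and $r=1-2^{-n}$ you get $f'(r)\geq\frac14\sum_{k\le n}2^k\omega_1(2^{-k})$. This sum is comparable to $\int_{2^{-n}}^{1}\omega_1(t)t^{-2}\,dt$. So the bound $|f'(z)|\lesssim\omega_1(d_{\mathbb{D}}(z))/d_{\mathbb{D}}(z)$, which is necessary for $f\in\Lambda_{\omega_1,\infty}(\mathbb{D})$, holds only when $\omega_1$ is also slow in the sense of \eqref{eq3x}. The theorem assumes only that $\omega_1$ is fast, and $\omega_1\in\mathscr{S}$ is again a fast-type condition. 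For example, $\omega_1(t)=t\log(e^2/t)$ (near $0$) lies in $\mathscr{S}$ and is fast but not slow. For it, $f'(r)\gtrsim(\log\frac{1}{1-r})^2$, while $\omega_1(1-r)/(1-r)\asymp\log\frac{1}{1-r}$, so $f\notin\Lambda_{\omega_1,\infty}(\mathbb{D})$.

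Your fallback fails for a different reason. The kernels $f_a$ with $f_a'(\zeta)=\frac{\omega_1(1-|a|)}{1-|a|}\frac{(1-|a|)^2}{(1-\bar a\zeta)^2}$ are indeed uniformly bounded in $\Lambda_{\omega_1,\infty}(\mathbb{D})$ for every majorant. But each one is large only near $a$. That suffices for $p=\infty$. For $p<\infty$, however, $(i_2)$ needs a lower bound simultaneously along the whole curve $\theta\mapsto\phi(ze^{i\theta})$. Summing such kernels over a hyperbolic lattice (e.g.\ with random signs and Khintchine's inequality) brings back the slow integral $\int_{d_{\mathbb{D}}(z)}^{1}\omega_1(t)t^{-2}\,dt$.

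\textbf{What you need} is a two-function lemma valid for an arbitrary majorant: analytic $f_1,f_2$ with $|f_1'(z)|+|f_2'(z)|\asymp\omega_1(d_{\mathbb{D}}(z))/d_{\mathbb{D}}(z)$ on $\mathbb{D}$. Results of this kind exist in the literature, e.g.\ the Kwon--Pavlovi\'c extension of Ramey--Ullrich, or Abakumov--Doubtsov's theorem for log-convex radial weights (and $\omega_1(t)/t$ is always equivalent to such a weight). They rest on a weight-adapted construction, not a geometric lacunary series. With such $f_1,f_2$ your argument goes through: apply $(i_1)$ to $f_1$ and $f_2$, then use the necessity half of your second fact. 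No closed-graph step is needed.

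\textbf{The rest of your plan is sound.} It is consistent with the route the paper indicates, namely the argument of \cite[Theorem 2.17]{Chen2025} with Theorem \ref{thm-1.1} in place of Theorem \ref{thm-C}. You also locate correctly where Theorem \ref{thm-1.1} enters: in $(i_4)\Rightarrow(i_5)$ and in $(i_5)\Rightarrow(i_2)$. Two small corrections there:
\begin{enumerate}
\item $u=\omega_1(d_{\mathbb{D}}(\phi))$ is genuinely superharmonic, since $\omega_1$ is concave and nondecreasing on $(0,1]$ and $1-|\phi|$ is superharmonic. So $u\geq\mathcal{P}[u]$ holds exactly, with no error term. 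This positivity is what lets you bound $u(r\,\cdot)-\mathcal{P}[u](r\,\cdot)$ by $\{u(r\,\cdot)-u\}_{+}+|u-\mathcal{P}[u](r\,\cdot)|$ in $(i_4)\Rightarrow(i_5)$.
\item In $(i_5)\Rightarrow(i_2)$, the Schwarz--Pick estimate produces the oscillation of $\mathcal{P}[u]$ over $\{w:|w-z|<d_{\mathbb{D}}(z)/2\}$, not its gradient at $z$. You can control this oscillation using the subharmonicity of $|\nabla\mathcal{P}[u]|$ together with Minkowski's inequality.
\end{enumerate}
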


 In particular, if we take $\omega_{1}(t)=t^{\alpha}$ and $\omega_{2}(t)=t^{\beta}$  
 with $\alpha, \beta\in(0,1]$ and $t\in[0,\infty)$ in Theorem \ref{thm-1.8}, then
we obtain the following result. 

\begin{cor}\label{cor-1.3}
Suppose that   $p\in(1,\infty)$,  $\alpha, \beta\in(0,1]$ and
 $\phi$ is an analytic function of $\mathbb{D}$ into itself.
Then the following conditions are equivalent.
\begin{enumerate}
\item[{\rm $(j_{1})$}] $\phi\in\mathscr{F}\big(\Lambda_{\omega_{\alpha},\infty}(\mathbb{D})\cap\mathscr{H}(\mathbb{D}),\Lambda_{\omega_{\beta},p}(\mathbb{D})\cap\mathscr{H}(\mathbb{D})\big)$;

\item[{\rm $(j_{2})$}] There is a positive constant $C$ such that, for all $z\in\mathbb{D}$,
$$\left(\int_{0}^{2\pi}\left(|\phi'(ze^{{\rm i}\theta})|
{\big(d_{\mathbb{D}}(\phi(ze^{{\rm i}\theta}))\big)^{\alpha-1}}\right)^{p}d\theta\right)^{\frac{1}{p}}
\leq C\big(d_{\mathbb{D}}(z)\big)^{\beta-1};$$
\end{enumerate}
Furthermore, assume that
$\phi\in \mathcal{C}(\overline{\mathbb{D}})$.
Then, the conditions {\rm $(j_{1})$} and {\rm $(j_{2})$} are equivalent to the conditions
 $(j_{3})$$\sim$$(j_{5})$.
\begin{enumerate}
\item[{\rm $(j_{3})$}]  ${\big(d_{\mathbb{D}}(\phi)\big)^{\alpha}}\in\Lambda_{\omega_{\beta},p}(\overline{\mathbb{D}});$

\item[{\rm $(j_{4})$}] ${\big(d_{\mathbb{D}}(\phi)\big)^{\alpha}}\in\Lambda_{\omega_{\beta},p}(\mathbb{T})$ and there is a positive constant $C$
such that, for all $r\in(0,1)$,
$$\left(\int_{0}^{2\pi}\left\{(d_{\mathbb{D}}(\phi(re^{{\rm i}\theta})))^{\alpha}-
(d_{\mathbb{D}}(\phi(e^{{\rm i}\theta})))^{\alpha}\right\}_{+}^{p}d\theta\right)^{\frac{1}{p}}
\leq\,C(d_{\mathbb{D}}(r))^{\beta};$$
\item[{\rm $(j_{5})$}]
$(d_{\mathbb{D}}(\phi))^{\alpha}\in\Lambda_{\omega_{\beta},p}(\mathbb{T})$ and there is a positive constant $C$
such that, for all $r\in(0,1)$,
$$\left(\int_{0}^{2\pi}\left((d_{\mathbb{D}}(\phi(re^{{\rm i}\theta})))^{\alpha}-\mathcal{P}[(d_{\mathbb{D}}(\phi))^{\alpha}](re^{{\rm i}\theta})\right)^{p}d\theta\right)^{\frac{1}{p}}
\leq\,C(d_{\mathbb{D}}(r))^{\beta}.$$
\end{enumerate}
\end{cor}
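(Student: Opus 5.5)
The plan is to obtain Corollary \ref{cor-1.3} as the special case $\omega_{1}=\omega_{\alpha}$, $\omega_{2}=\omega_{\beta}$ of Theorem \ref{thm-1.8}, where $\omega_{\alpha}(t)=t^{\alpha}$ and $\omega_{\beta}(t)=t^{\beta}$. The work consists of checking that these power functions meet every hypothesis of Theorem \ref{thm-1.8}, and then rewriting its conditions $(i_{1})$--$(i_{5})$ in this case.

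First, $\omega_{\gamma}(t)=t^{\gamma}$ with $\gamma\in(0,1]$ is a majorant. It is continuous and increasing on $[0,\infty)$, it satisfies $\omega_{\gamma}(0)=0$, and $\omega_{\gamma}(t)/t=t^{\gamma-1}$ is non-increasing on $(0,\infty)$. It is also fast, since for every $\delta>0$
$$\int_{0}^{\delta}\frac{\omega_{\gamma}(t)}{t}\,dt=\frac{\delta^{\gamma}}{\gamma}=\frac{1}{\gamma}\,\omega_{\gamma}(\delta),$$
so \eqref{eq2x} holds with $C=1/\gamma$. Hence $\omega_{\alpha}$ and $\omega_{\beta}$ are two fast majorants, and Theorem \ref{thm-1.8} applies for $p\in(1,\infty)$. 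This gives the equivalence $(i_{1})\Leftrightarrow(i_{2})$ with no further assumption on $\phi$.

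Next, I substitute. Since $\omega_{1}(d)/d=d^{\alpha-1}$ and $\omega_{2}(d)/d=d^{\beta-1}$, condition $(i_{2})$ becomes $(j_{2})$ verbatim, and $(i_{1})$ is literally $(j_{1})$. For the second part, with $\phi\in\mathcal{C}(\overline{\mathbb{D}})$, I need $\omega_{\alpha}\in\mathscr{S}$. The function $\omega_{\alpha}$ is differentiable on $(0,1]$ with $\omega_{\alpha}'(t)=\alpha t^{\alpha-1}$. This derivative is non-increasing on $(0,1]$: it is constant when $\alpha=1$ and decreasing when $\alpha<1$. Moreover
$$\sup_{t\in(0,1]}\frac{\omega_{\alpha}(t)}{t\,\omega_{\alpha}'(t)}=\frac{1}{\alpha}<\infty.$$
So the second part of Theorem \ref{thm-1.8} applies. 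Replacing $\omega_{1}(d_{\mathbb{D}}(\cdot))$ by $(d_{\mathbb{D}}(\cdot))^{\alpha}$ and $\omega_{2}(d_{\mathbb{D}}(r))$ by $(d_{\mathbb{D}}(r))^{\beta}$ turns $(i_{3})$, $(i_{4})$ and $(i_{5})$ into $(j_{3})$, $(j_{4})$ and $(j_{5})$ respectively.

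There is no genuine obstacle, since all the analytic content sits in Theorem \ref{thm-1.8}. The only point needing care is the endpoint $\alpha=1$ (and likewise $\beta=1$). There $\omega_{1}(t)=t$ is still a majorant, fast, and in $\mathscr{S}$, because the monotonicity conditions in the definitions are non-strict. So no separate argument is needed for that case.
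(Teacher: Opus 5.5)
Your proposal is correct and follows the paper's argument exactly. The paper obtains the corollary by taking $\omega_{1}(t)=t^{\alpha}$ and $\omega_{2}(t)=t^{\beta}$ in Theorem \ref{thm-1.8}, and your checks are precisely the hypotheses that need verifying: both are fast majorants, and $\omega_{\alpha}\in\mathscr{S}$ with $\sup_{t\in(0,1]}\omega_{\alpha}(t)/(t\,\omega_{\alpha}'(t))=1/\alpha$. Your endpoint remark also explains why $\beta=1$ is now admissible: Theorem \ref{thm-1.8} only requires $\omega_{2}$ to be fast, and $t\mapsto t$ is fast even though it fails the slow condition \eqref{eq3x}.
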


\begin{rem}
The result of Theorem \ref{thm-1.8} shows that, for the $p\in(1,\infty)$ case of \cite[Theorem 2.17]{Chen2025}, the condition on 
$\omega_2$ can be relaxed from a regular majorant to a fast majorant.
We omit the proof of Theorem \ref{thm-1.8},
because it suffices to use an argument similar to that in the proof of \cite[Theorem 2.17]{Chen2025} and Theorem \ref{thm-1.1}
 instead of  Theorem \ref{thm-C}.
Also, the result of Corollary \ref{cor-1.3} shows that, for the $p\in(1,\infty)$ case of \cite[Corollary 2.19]{Chen2025}, the condition on $\beta$ can be relaxed from $\beta\in(0,1)$ to $\beta \in(0,1]$.
\end{rem}

The remainder of this paper is organized as follows. Section \ref{sec-2} contains the proof of Theorem \ref{thm-1.1}. 
For the proof, we use new functional analytical approaches as follows.
In the case $p\in (1,\infty)$,
we will show that a Hardy-Littlewood-type theorem holds for arbitrary majorant
by making use of Fourier series and the Hilbert transform.
In the case $p=1$,
we will prove the equivalence
by making use of Fourier series and the uniform boundedness principle.



\section{The proof of Theorem \ref{thm-1.1}}\label{sec-2}

 In order to prove Theorem \ref{thm-1.1}, we need to recall some basic definitions and
introduce some necessary terminologies. 

\subsection*{Sobolev spaces} 
 
For $p\in(0,\infty]$, we use $L^p(\mathbb{T})$  to denote the space of
all measurable  functions $f:\mathbb{T} \rightarrow \mathbb{C}$ satisfying
$\| f\| _{L^p(\mathbb{T})}<\infty$, where
\begin{equation*} 
\| f\| _{L^p(\mathbb{T})}=\begin{cases}
\displaystyle \left( \int_{0}^{2\pi}| f(e^{i\theta})| ^p\,d\theta\right)^{\frac{1}{p}}, \;\;\; &\text{if} \;\;p\in(0,\infty), \\
\displaystyle \operatorname*{esssup}_{\theta\in[0,2\pi]} \{| f(e^{i\theta})| \},\; \; &\text{if} \;\; p=\infty .
\end{cases}
\end{equation*}
 
For $p\in (1,\infty)$, a function $f:\mathbb T\rightarrow \mathbb{C}$ is said to belong to the Sobolev space
$W^{1,p}(\mathbb T)$,
if  $f\in L^{p}(\mathbb T) $  and there exists a function $g\in L^{p}(\mathbb T)$
such that
$$
f(e^{i\theta})=\int_{0}^{\theta} g(e^{i\eta})d\eta+C 
$$
with some constant $C $ (see \cite[Definition 1.11]{serov2017}). 
Obviously, $ g(e^{i\theta})=\frac{d f (e^{i\theta})}{d\theta}$ for almost every $\theta\in \mathbb{R}$.
For convenience, we write $ \frac{d f (e^{i\theta})}{d\theta}=\dot{f}(e^{i\theta})$ on $\mathbb{T}$.
Then $g(e^{i\theta})=\dot{f}(e^{i\theta})$ a.e. on $\mathbb{T}$.

\subsection*{Fourier series } 

For   $f\in L^1(\mathbb T)$ and $k\in\mathbb Z$, 
define the $k$-th Fourier
coefficient of $f$  by
\begin{equation}\label{ceq-2.1}
\widehat{f}(k)
=
\frac{1}{2\pi}
\int_{0}^{2\pi}
f(e^{i\theta})e^{-ik\theta}\,d\theta. 
\end{equation} 
Then the trigonometric  series
$
\sum_{k=-\infty}^{\infty}
\widehat f(k)e^{ik\theta} 
$
 is called the Fourier series of $f$, and we write
 $$
f(e^{i\theta})\sim \sum_{k=-\infty}^{\infty}
\widehat f(k)e^{ik\theta}  
 $$
   (see \cite{Duo2001} or \cite[p. 14]{serov2017}).

If $f\in W^{1,p}(\mathbb T)$ for some $p\in(1,\infty)$, 
it follows from \cite[Theorem 9.5 and Corollary 10.7]{serov2017}  that
\begin{align}\label{Fourier-equality}
f(e^{i\theta})=\sum_{k=-\infty}^{\infty}\widehat f(k)e^{ik\theta}
\end{align}
and the above Fourier series  
 converges absolutely and uniformly on $\mathbb{T}$.
Moreover, we see from the proof of \cite[Theorem 9.5]{serov2017} that
for any $k\in\mathbb Z$,
\begin{equation}\label{ceq-2.2}
\widehat{\dot{f}}( k)
=
ik\widehat f(k) .
\end{equation}

\subsection*{Hilbert transform} 
Given a function $f\in L^{p}(\mathbb{T})$ with $p\in[1,\infty]$,
the Hilbert transform of $f$, denoted by $\mathcal{H}[f]$,
 is defined  as follows:
\begin{equation}\label{ceq-z2.3}
 \mathcal{H}[f] (e^{i\theta})
=\frac{1}{2\pi}\text{P. V.} \int_{-\pi}^{\pi} f(e^{i\eta}) \cot\frac{\theta-\eta}{2}d\eta,
\end{equation} 
where $\theta\in \mathbb{R}$ and $\text{P. V.}$ stands for the principal value  (see \cite[Sections 3.10 and 3.14]{King}).
From \cite[p. 252]{Z59}, we know that the integral in \eqref{ceq-z2.3}  converges for almost all $\theta\in \mathbb{R}$.
If $f\in  L^{p}(\mathbb{T})$ with  $p\in(1,\infty]$, by \cite[Section 6.11]{King}, we know that
\begin{equation}\label{ceq-2.3}
 \mathcal{H}[f]  (e^{i\theta})\sim- i\sum_{k=-\infty}^{\infty}\text{sgn}(k)\widehat{f}(k) e^{ ik\theta} , 
\end{equation}  
where the sign function $\text{sgn}$ is defined by
 $$
\text{sgn}(x)
=
\begin{cases}
	1, & x>0,\\
	0, & x=0,\\
	-1, & x<0.
\end{cases}
$$
If  $p\in(1,\infty)$, 
 we deduce from \cite[Equation (6.167)]{King} that there exists a positive 
constant $C_1=C_1(p)$  such that
\begin{equation*}
\|\mathcal{H}[f]\|_{L^p(\mathbb T)}
\leq
C_1\|f\|_{L^p(\mathbb T)},
\end{equation*}
Therefore, for any $f\in W^{1,p}(\mathbb{T})$ with $p\in(1,\infty)$,  we have 
\begin{equation}\label{ceq-2.4}
	\| \mathcal{H}[\dot{f}]\|_{L^p( \mathbb{T})}
	\leq
	C_1\| \dot{f}\|_{L^p( \mathbb{T})} 
\end{equation}
and
\begin{equation}\label{ceq-2.5}
 \mathcal{H}[\dot{f}]  (e^{i\theta})\sim -i\sum_{k=-\infty}^{\infty}\text{sgn}(k)\widehat{\dot{f}}(k) e^{ ik\theta}
=  \sum_{k=-\infty}^{\infty} |k|\widehat f( k) e^{ ik\theta},
\end{equation}
where  \eqref{ceq-2.2}  and \eqref{ceq-2.3} are applied in \eqref{ceq-2.5}.

\subsection*{Proof of Theorem \ref{thm-1.1}}
 We divide the proof of this theorem into two cases.

\medskip
\noindent $\mathbf{Case~1}.$ Let $p\in(1,\infty)$.
\medskip

Before proving this case, we first recall or establish several necessary lemmas. 

\begin{Lem}\label{lem-D}\rm{(}\cite[p. 19]{Z59}\rm{)}
	For  $p\in[1,\infty)$,  Minkowski's integral inequality 
is
	$$
	\left(\int_{A_{1}}\left|\int_{B_{1}} \mathcal{X}(\zeta, \xi) d \mu_{\xi}\right|^{p} d \mu_{\zeta}\right)^{\frac{1}{p}} \leqslant \int_{B_{1}}\left(\int_{A_{1}}|\mathcal{X}(\zeta, \xi)|^{p} d \mu_{\zeta}\right)^{\frac{1}{p}} d \mu_{\xi},
	$$
	where
 $A_{1}$  and  $B_{1}$  are two measurable sets with positive measures $ d \mu_{\zeta} $ and $ d \mu_{\xi}$, respectively,
  and $\mathcal{X}$  is integrable on $ A_{1} \times B_{1}$.
\end{Lem}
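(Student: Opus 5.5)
We argue directly by Tonelli's theorem and H\"older's inequality. We assume, as is implicit in the statement, that the measures $\mu_\zeta$ and $\mu_\xi$ are $\sigma$-finite, so that Fubini--Tonelli applies. Put
$$
F(\zeta)=\left|\int_{B_{1}}\mathcal{X}(\zeta,\xi)\,d\mu_{\xi}\right|,
\qquad
G(\xi)=\left(\int_{A_{1}}|\mathcal{X}(\zeta,\xi)|^{p}\,d\mu_{\zeta}\right)^{\frac{1}{p}} .
$$
We must show $\|F\|_{L^p(A_1)}\le \int_{B_1}G\,d\mu_\xi$. If the right-hand side is infinite there is nothing to prove, so we assume it is finite.

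\emph{The case $p=1$.} By the triangle inequality, $F(\zeta)\le \int_{B_1}|\mathcal{X}(\zeta,\xi)|\,d\mu_\xi$. Integrating in $\zeta$ and interchanging the order of integration by Tonelli's theorem gives the claim at once.

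\emph{The case $p\in(1,\infty)$.} Let $q=p/(p-1)$. To avoid dividing by a possibly infinite quantity, we work with a truncation. Choose sets $E_N\uparrow A_1$ of finite $\mu_\zeta$-measure, and set $F_N=\min\{F,N\}\,\mathbf 1_{E_N}$. Then $F_N\le F$, $F_N$ is bounded, and $\int_{A_1}F_N^p\,d\mu_\zeta<\infty$.

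We estimate
$$
\int_{A_1}F_N^{p}\,d\mu_\zeta\le\int_{A_1}F_N^{p-1}F\,d\mu_\zeta
\le\int_{B_1}\int_{A_1}F_N^{p-1}(\zeta)\,|\mathcal{X}(\zeta,\xi)|\,d\mu_\zeta\,d\mu_\xi ,
$$
where the second step bounds $F$ by $\int_{B_1}|\mathcal{X}|\,d\mu_\xi$ and interchanges the integrals via Tonelli.

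Next apply H\"older's inequality with exponents $q$ and $p$ to the inner integral, for each fixed $\xi$. This bounds it by
$$
\Big(\int_{A_1}F_N^{p}\,d\mu_\zeta\Big)^{1/q}G(\xi).
$$
Consequently
$$
\int_{A_1}F_N^{p}\,d\mu_\zeta\le\Big(\int_{A_1}F_N^{p}\,d\mu_\zeta\Big)^{1/q}\int_{B_1}G\,d\mu_\xi .
$$
Since the left-hand side is finite, we may divide by its $1/q$-th power, which yields $\|F_N\|_{L^p(A_1)}\le\int_{B_1}G\,d\mu_\xi$. If the left-hand side is zero, this bound holds trivially.

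Finally, let $N\to\infty$. Since $F_N\uparrow F$, the monotone convergence theorem gives the inequality for $F$ itself.

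The only delicate point is this finiteness issue in the division step, and the truncation $F_N$ handles it. Measurability of $F$ and $G$ and the use of Tonelli are routine, given that $\mathcal{X}$ is measurable on $A_1\times B_1$ and the measures are $\sigma$-finite.
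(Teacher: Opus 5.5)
Your proof is correct. It is the standard Tonelli--H\"older argument, and the truncation $F_N=\min\{F,N\}\mathbf 1_{E_N}$ correctly secures the finiteness you need before dividing. The paper gives no proof of its own and simply quotes the inequality from Zygmund; your extra $\sigma$-finiteness assumption is harmless, since every application in the paper uses Lebesgue measure on bounded intervals.
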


For any $r\in[0,1)$ and $\varphi \in L^{1}(\mathbb{T})$, we define
\begin{equation}\label{ceq-3.1}
\mathcal{P}_r[\varphi](e^{i\theta })
=\frac{1}{2\pi}
\int_{0}^{2\pi}\mathcal{P}_r(\eta)\varphi\big(e^{i(\theta-\eta)}\big)\,d\eta
\end{equation}
on $\mathbb{T}$, where
$$
\mathcal{P}_{r}(\eta)=\frac{1-r^{2}}{1+r^{2}-2r\cos\eta} 
$$
for   $\eta\in \mathbb{R}$.
Then  
\begin{equation*} 
\mathcal{P}_r[\varphi](e^{i\theta })
=\mathcal{P}[\varphi](re^{i\theta }),
\end{equation*}
and,  
it follows directly from \cite[p. 71]{KY2004} or \cite[p. 24]{serov2017} that
\begin{align}\label{P_r[f]-Fourier}
 \mathcal{P}_{r}[\varphi](e^{i\theta})=\sum_{k=-\infty}^{\infty}\widehat{\varphi}(k)r^{|k|}e^{ik\theta}.
\end{align}
The operator $\mathcal{P}_{r}$ possesses the following contraction property,
which will be used later on.

\begin{Lem}\label{lem-E} \rm{(}\cite[Theorem 1]{Be1992} or \cite[Equation (17)]{Riesz1928}\rm{)}
The operator $\mathcal{P}_{r}$ is a contraction mapping from $L^{p}(\mathbb{T})$ to $L^{p}(\mathbb{T})$,
where $r\in[0,1)$ and $p\in[1,\infty]$,	
 namely
	$$
	\|\mathcal{P}_{r}[\varphi]\|_{L^{p}(\mathbb{T})}\leq \|\varphi\|
	_{L^{p}(\mathbb{T})}.
$$
\end{Lem}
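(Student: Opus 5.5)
The plan is to realize $\mathcal{P}_r[\varphi]$ as an average of rotations of $\varphi$ against a probability measure, and then apply Minkowski's integral inequality (Lemma \ref{lem-D}) for $p<\infty$ and a direct pointwise estimate for $p=\infty$. The key observation is that for fixed $r\in[0,1)$ the kernel satisfies $\mathcal{P}_r(\eta)=\frac{1-r^2}{1+r^2-2r\cos\eta}=\frac{1-r^2}{|1-re^{i\eta}|^2}>0$. By \eqref{ceq-1.5} applied at $z=r$, we also have $\frac{1}{2\pi}\int_0^{2\pi}\mathcal{P}_r(\eta)\,d\eta=1$. Hence $d\mu(\eta):=\frac{1}{2\pi}\mathcal{P}_r(\eta)\,d\eta$ is a probability measure on $[0,2\pi]$, and by \eqref{ceq-3.1},
$$\mathcal{P}_r[\varphi](e^{i\theta})=\int_0^{2\pi}\varphi\big(e^{i(\theta-\eta)}\big)\,d\mu(\eta).$$

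\textbf{Case $p\in[1,\infty)$.} I will apply Lemma \ref{lem-D} with $A_1=B_1=[0,2\pi]$, $d\mu_\zeta=d\theta$, $d\mu_\xi=d\mu(\eta)$ and $\mathcal{X}(\theta,\eta)=\varphi(e^{i(\theta-\eta)})$. Integrability of $\mathcal{X}$ on the product follows from Tonelli's theorem: $\mathcal{P}_r$ is bounded by $\frac{1+r}{1-r}$ and $\varphi\in L^1(\mathbb{T})$. Lemma \ref{lem-D} then gives
$$\|\mathcal{P}_r[\varphi]\|_{L^p(\mathbb{T})}\le \int_0^{2\pi}\Big(\int_0^{2\pi}\big|\varphi\big(e^{i(\theta-\eta)}\big)\big|^p\,d\theta\Big)^{1/p}d\mu(\eta).$$
By rotation invariance of $d\theta$ on $\mathbb{T}$, the inner integral equals $\|\varphi\|_{L^p(\mathbb{T})}$ for every $\eta$. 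Since $\mu$ has total mass $1$, the right-hand side is exactly $\|\varphi\|_{L^p(\mathbb{T})}$.

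\textbf{Case $p=\infty$.} Here the bound is immediate: $|\varphi(e^{i(\theta-\eta)})|\le\|\varphi\|_{L^\infty(\mathbb{T})}$ for a.e. $\eta$, for each fixed $\theta$. Integrating against the probability measure $\mu$ gives $|\mathcal{P}_r[\varphi](e^{i\theta})|\le \|\varphi\|_{L^\infty(\mathbb{T})}$ for every $\theta$. The degenerate value $r=0$ is trivial in both cases, since then $\mathcal{P}_0\equiv1$ and $\mathcal{P}_0[\varphi]=\widehat{\varphi}(0)$.

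There is no real obstacle in this argument. The only point that needs care is verifying the hypotheses of Lemma \ref{lem-D}, namely the measurability and integrability of $\mathcal{X}$ on $A_1\times B_1$. This is handled by the boundedness of $\mathcal{P}_r$ for fixed $r<1$ together with $L^p(\mathbb{T})\subset L^1(\mathbb{T})$.
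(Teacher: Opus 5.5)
Your proof is correct. The paper gives no proof of this lemma; it simply quotes it from the literature, so there is no argument of the paper's to compare against.

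Your argument is the standard one: the measure $\frac{1}{2\pi}\mathcal{P}_r(\eta)\,d\eta$ is positive with unit mass, $d\theta$ is rotation invariant, and you apply Minkowski's integral inequality (Lemma \ref{lem-D}) for $p<\infty$ and the pointwise bound for $p=\infty$. It is also exactly the device the paper itself uses later in \eqref{ceq-3.8} and \eqref{ceq-3.12}.

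Two small remarks. First, the bound $\mathcal{P}_r\le\frac{1+r}{1-r}$ is not needed for the integrability hypothesis of Lemma \ref{lem-D}. Since $\mu$ is a probability measure, Tonelli already gives $\int\!\int|\varphi(e^{i(\theta-\eta)})|\,d\theta\,d\mu(\eta)=\|\varphi\|_{L^1(\mathbb{T})}<\infty$. Second, an equally short route for $p\in[1,\infty)$ is Jensen's inequality for $\mu$, namely $\bigl|\int\varphi(e^{i(\theta-\eta)})\,d\mu(\eta)\bigr|^p\le\int|\varphi(e^{i(\theta-\eta)})|^p\,d\mu(\eta)$, followed by Fubini. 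This gives the same conclusion without invoking Minkowski's inequality.
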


\begin{lem}\label{clem-3.1}
Let $p\in[1,\infty]$ and $\omega$ be a  majorant. Suppose that there exists a  positive constant
	$C_{2}$ such that for every $f\in \mathcal{C}(\mathbb{T})\cap\Lambda_{\omega,p}(\mathbb{T})$
	and every  $r\in(0,1)$,
	\begin{equation}\label{ceq-3.2}
		\left\|f-\mathcal{P}_{r}[f]\right\|_{L^{p}(\mathbb{T})}
		\leq
		C_{2} \|f \|_{\Lambda_{\omega,p}(\mathbb{T}),s}
		\omega(1-r).
	\end{equation}
	Then there exists a  positive  constant $ C_{3} $ such that	for all $z,w\in\overline{\mathbb{D}}$,
	\begin{equation*}
		\mathcal L_{p}[\mathcal{P}[f]](z,w)
		\leq
		C_{3}\|f\|_{\Lambda_{\omega,p}(\mathbb T),s}
		\omega(|z-w|),
	\end{equation*}
where $ C_{3} $ is independent of $f$.

\end{lem}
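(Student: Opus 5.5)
Write $z=re^{i\alpha}$, $w=\rho e^{i\beta}$ with $0\le\rho\le r\le1$. Because $\mathcal L_p$ is rotation invariant, and $\mathcal P[f](re^{i\theta})=\mathcal P_r[f](e^{i\theta})$, the whole proof reduces to $L^p(\mathbb T)$ estimates on the functions $\mathcal P_r[f]$. We set $\|f\|:=\|f\|_{\Lambda_{\omega,p}(\mathbb T),s}$. We will pass through the intermediate point $re^{i\beta}$ and use the triangle inequality:
$$\mathcal L_p[\mathcal P[f]](z,w)\le \|\Delta_{\alpha-\beta}\mathcal P_r[f]\|_{L^p(\mathbb T)}+\|\mathcal P_r[f]-\mathcal P_\rho[f]\|_{L^p(\mathbb T)}.$$
Since $|z-w|\ge \max\{r|e^{i(\alpha-\beta)}-1|,\ r-\rho\}$, it suffices to bound the first term by $C\|f\|\,\omega(|z-w|)$ and the second by $C\|f\|\,\omega(r-\rho)$. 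We then use that $\omega$ is increasing, together with $\omega(2t)\le2\omega(t)$, which follows from $\omega(t)/t$ being non-increasing.

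\emph{Angular term.} $\mathcal P_r$ commutes with rotations, so $\Delta_h\mathcal P_r[f]=\mathcal P_r[\Delta_hf]$. Lemma \ref{lem-E} and \eqref{ceq-1.4} then give $\|\Delta_h\mathcal P_r[f]\|_{L^p}\le\|f\|\,\omega(|e^{ih}-1|)$ for $h=\alpha-\beta$. When $r\ge1/2$ this is at most $2\|f\|\,\omega(r|e^{ih}-1|)\le 2\|f\|\,\omega(|z-w|)$. When $r<1/2$ we bound the term more crudely. The boundary norm of the whole term is $\le 2\sup_{s\le 1/2}\|\mathcal P_s[f]-\widehat f(0)\|_{L^p}$. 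Using the Fourier expansion \eqref{P_r[f]-Fourier}, $\partial_\theta\mathcal P_s[f]$ has $L^p$ norm $\le C\|f-\widehat f(0)\|_{L^1}$ for $s\le 3/4$. Moreover $\|f-\widehat f(0)\|_{L^p}\le\frac1{2\pi}\int_0^{2\pi}\|\Delta_hf\|_{L^p}\,dh\le C\|f\|\,\omega(2)$, by Minkowski's inequality (Lemma \ref{lem-D}). Together these give a Lipschitz bound $C\|f\|\,|e^{ih}-1|$ in this regime. Since $\omega(t)/t$ is non-increasing, $t\le C\omega(t)/\omega(2)\cdot 2$ for $t\le2$, so this is $\le C\|f\|\,\omega(|z-w|)$ as well.

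\emph{Radial term.} This is where hypothesis \eqref{ceq-3.2} enters, and it is the main obstacle. For $r-\rho\ge (1-\rho)/2$ we have $1-\rho\le 2(r-\rho)$ and $1-r\le 1-\rho$. Writing $\|\mathcal P_r f-\mathcal P_\rho f\|\le\|f-\mathcal P_rf\|+\|f-\mathcal P_\rho f\|$ and applying \eqref{ceq-3.2} twice gives the bound $2C_2\|f\|\,\omega(1-\rho)\le 4C_2\|f\|\,\omega(r-\rho)$. (For $r=1$, $\mathcal P_1[f]=f$ is interpreted as the boundary function, which is consistent since $f$ is continuous.)

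For $r-\rho<(1-\rho)/2$ we need a derivative estimate. The plan is to show
$$\Bigl\|\partial_s\mathcal P_s[f]\Bigr\|_{L^p}\le C\|f\|\,\frac{\omega(1-s)}{1-s}$$
and integrate from $\rho$ to $r$; on this range $1-s\asymp 1-\rho$, so the integral is $\le C\|f\|\,(r-\rho)\,\omega(1-\rho)/(1-\rho)\le C\|f\|\,\omega(r-\rho)$, again because $\omega(t)/t$ is non-increasing. To prove the derivative bound, use the semigroup identity $\mathcal P_s=\mathcal P_{s/t}\mathcal P_t$ with $t=(1+s)/2$, whose Fourier form is clear from \eqref{P_r[f]-Fourier}. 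Then
$$\partial_s\mathcal P_s[f]=\partial_s\mathcal P_{s/t}\bigl[\mathcal P_t f-f\bigr]+\partial_s\mathcal P_{s/t}\bigl[f\bigr],$$
and we handle the pieces as follows. The kernel $\partial_u\mathcal P_u$ with $u=s/t$ has $L^1$ norm $\lesssim 1/(1-u)\asymp 1/(1-s)$. Combined with \eqref{ceq-3.2} applied to $\|f-\mathcal P_tf\|\le C_2\|f\|\,\omega(1-t)$, this controls the piece acting on $\mathcal P_tf-f$. The remaining piece acting on $f$ is handled with the correct grouping: rewrite $\partial_s\mathcal P_s[f]=\mathcal P_{s/t}'[\mathcal P_t f-f]$ plus a term that vanishes by the mean-zero property $\int\partial_u\mathcal P_u=0$. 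The cleanest way to carry this out is to write the kernel against $f(e^{i(\theta-\eta)})-f(e^{i\theta})$ and note that its $L^p$ contribution is bounded by $\int|\partial_u\mathcal P_u(\eta)|\,\|\Delta_\eta f\|_{L^p}\,d\eta$ via Lemma \ref{lem-D}. This bound in turn follows from \eqref{ceq-3.2}, or directly from $\|\Delta_\eta f\|\le\|f\|\,\omega(|\eta|)$ together with the kernel bound $|\partial_u\mathcal P_u(\eta)|\lesssim\min\{(1-u)^{-2},\eta^{-2}\}$. Getting this last step to close without a slowness condition on $\omega$ is the delicate point, and it is exactly why \eqref{ceq-3.2} is assumed. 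The constants depend only on $C_2$ and $\omega$ through the doubling bound, so $C_3$ is independent of $f$.
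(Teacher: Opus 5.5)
Your splitting into an angular and a radial term is fine. So are the angular estimate for $r\ge 1/2$ and the radial subcase $r-\rho\ge(1-\rho)/2$. The gap is the derivative bound $\|\partial_s\mathcal P_s[f]\|_{L^p(\mathbb T)}\le C\|f\|\,\omega(1-s)/(1-s)$, which you never actually prove.

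With $t$ fixed, $\mathcal P_s=\mathcal P_{s/t}\mathcal P_t$ gives $\partial_s\mathcal P_s[f]=t^{-1}(\partial_u\mathcal P_u)|_{u=s/t}\bigl[\mathcal P_t[f]-f\bigr]+t^{-1}(\partial_u\mathcal P_u)|_{u=s/t}[f]$. The second piece is again the radial derivative of $\mathcal P[f]$, now at radius $u=s/t$ with $1-u\asymp 1-s$. So the decomposition only relates the quantity to itself. The mean-zero property $\int\partial_u\mathcal P_u=0$ annihilates constants, not $f$, so nothing ``vanishes''.

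Your fallback fails in another form. For $1-u\ll|\eta|\le\pi$ the kernel $\partial_u\mathcal P_u(\eta)$ has constant sign and size $\asymp\eta^{-2}$. Hence $\int|\partial_u\mathcal P_u(\eta)|\,\omega(|\eta|)\,d\eta$ is comparable to $\omega(1-u)/(1-u)+\int_{1-u}^{\pi}\omega(\eta)\eta^{-2}\,d\eta$. This is $O(\omega(1-u)/(1-u))$ only under the slow condition $(b_2)$; for $\omega(t)=t$ it grows like $\log\frac{\pi}{1-u}$. That is precisely a case the lemma must cover, since it is combined with Lemma \ref{clem-3.3} for arbitrary majorants. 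Nor does \eqref{ceq-3.2} control $\int|K(\eta)|\,\|\Delta_\eta f\|_{L^p(\mathbb T)}\,d\eta$ for a kernel $K$ of size $\eta^{-2}$: it bounds one positive average of translates of $f$. So ``this bound in turn follows from \eqref{ceq-3.2}'' is unsupported. The derivative estimate is in fact true under \eqref{ceq-3.2}, for instance via a uniform multiplier bound expressing $(1-s)\partial_s\mathcal P_s$ through $I-\mathcal P_s$. But that is a separate argument you do not give.

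No derivative is needed: apply \eqref{ceq-3.2} to $g=\mathcal P_r[f]$ instead of $f$. Since $\Delta_h\mathcal P_r[f]=\mathcal P_r[\Delta_h f]$ (a fact you already use), Lemma \ref{lem-E} and \eqref{ceq-1.4} give $g\in\mathcal C(\mathbb T)\cap\Lambda_{\omega,p}(\mathbb T)$ with $\|g\|_{\Lambda_{\omega,p}(\mathbb T),s}\le\|f\|_{\Lambda_{\omega,p}(\mathbb T),s}$. Moreover $\mathcal P_{\rho/r}[g]=\mathcal P_\rho[f]$ by the semigroup property. Hence for $1/2\le r<1$ and $0<\rho<r$,
\[
\|\mathcal P_r[f]-\mathcal P_\rho[f]\|_{L^p(\mathbb T)}=\bigl\|g-\mathcal P_{\rho/r}[g]\bigr\|_{L^p(\mathbb T)}\le C_2\|f\|_{\Lambda_{\omega,p}(\mathbb T),s}\,\omega\Bigl(\frac{r-\rho}{r}\Bigr)\le 2C_2\|f\|_{\Lambda_{\omega,p}(\mathbb T),s}\,\omega(r-\rho).
\]
The remaining cases are routine. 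The case $r=1$ is \eqref{ceq-3.2} itself, and $\rho=0$ follows by letting $\rho\to0^{+}$. For $r<1/2$, use the smooth-regime bound $\|\partial_s\mathcal P_s[f]\|_{L^\infty(\mathbb T)}\le C\|f-\widehat f(0)\|_{L^1(\mathbb T)}$ for $s\le 1/2$. This also removes the case split on $r-\rho$ versus $1-\rho$.

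Two smaller slips need fixing:
\begin{itemize}
\item The inequality $|z-w|\ge r|e^{i(\alpha-\beta)}-1|$ is false: take $z=-1$, $w=1/2$. What is true is $|z-w|\ge\rho|e^{i(\alpha-\beta)}-1|$, which gives $r|e^{i(\alpha-\beta)}-1|\le 3|z-w|$.
\item For $r<1/2$ your angular Lipschitz bound must keep the factor $r$, i.e. $\|\Delta_h\mathcal P_r[f]\|_{L^p(\mathbb T)}\le C\,r\,|e^{ih}-1|\,\|f-\widehat f(0)\|_{L^1(\mathbb T)}$, which follows from $s^{|k|}\le s$ for $k\neq0$. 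Without it the comparison with $\omega(|z-w|)$ fails when $r$ is small.
\end{itemize}
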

\begin{proof}
 Following Step 1 in the proof of \cite[Theorem 2.11]{Chen2025}, 
 we replace the use of \cite[Lemma 3.8]{Chen2025} with \eqref{ceq-3.2}. 
 The desired result then follows immediately, and we therefore omit the routine details. 
\end{proof}

In view of  Lemma \ref{clem-3.1}, to prove Theorem \ref{thm-1.1} for the case $p\in(1,\infty)$, it is sufficient to 
establish the inequality \eqref{ceq-3.2}  for all $f\in \mathcal{C}(\mathbb{T})\cap\Lambda_{\omega,p}(\mathbb{T})$
	and   $r\in(0,1)$,
namely, Lemma \ref{clem-3.3} below.
For this, we need the following auxiliary result.

\begin{lem}\label{clem-3.2}
Let $p\in(1,\infty)$.
Then there exists a  positive constant
	$C_{4}=C_{4}(p)$ such that for all  $r\in[1/2,1)$ and $f\in W^{1,p}(\mathbb{T})$,
	\begin{equation*} 
		\left\|f-\mathcal{P}_{r}[f]\right\|_{L^{p}(\mathbb{T})}
		\leq
		C_{4}(1-r)\|\dot{f}\|_{L^{p}(\mathbb{T})}.
	\end{equation*}
	\end{lem}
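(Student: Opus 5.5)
We prove Lemma \ref{clem-3.2} by writing $f-\mathcal{P}_r[f]$ as a convolution of $\mathcal{H}[\dot f]$ with an $L^1$ kernel whose norm is $O(1-r)$. Fourier series make this factorization transparent, and the Hilbert transform bound \eqref{ceq-2.4} then closes the argument.

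\emph{Step 1: factor the multiplier.} By \eqref{Fourier-equality} and \eqref{P_r[f]-Fourier}, we have
$$
f(e^{i\theta})-\mathcal{P}_r[f](e^{i\theta})=\sum_{k\neq 0}(1-r^{|k|})\widehat f(k)e^{ik\theta}.
$$
By \eqref{ceq-2.5}, the function $\mathcal{H}[\dot f]$ has Fourier coefficients $|k|\widehat f(k)$. So we write
$$
1-r^{|k|}=|k|\cdot m_r(k),\qquad m_r(k):=\frac{1-r^{|k|}}{|k|}\quad (k\neq0),\qquad m_r(0):=0.
$$
We then look for a kernel $K_r\in L^1(\mathbb{T})$ with $\widehat{K_r}(k)=m_r(k)$. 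Using $1-r^{n}=\int_r^1 n s^{n-1}\,ds$, we get $m_r(k)=\int_r^1 s^{|k|-1}\,ds$. This suggests taking
$$
K_r(e^{i\theta})=\int_r^1 \frac{\mathcal{P}_s(\theta)-1}{s}\,ds .
$$
Indeed, $\mathcal{P}_s(\theta)-1=\sum_{k\ne0}s^{|k|}e^{ik\theta}$, so formally the $k$-th coefficient of $K_r$ is $\int_r^1 s^{|k|-1}ds=m_r(k)$. On $[r,1)\subset[1/2,1)$ we have $1/s\le 2$, and $\|\mathcal{P}_s-1\|_{L^1}\le 2\pi+2\pi=4\pi$, since $\mathcal{P}_s\ge0$ has integral $2\pi$ by \eqref{ceq-1.5}. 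Fubini (Tonelli for the absolute values) therefore gives $\|K_r\|_{L^1(\mathbb{T})}\le 8\pi(1-r)$ and justifies the coefficient computation.

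\emph{Step 2: identify the two sides.} Set $F:=\frac1{2\pi}K_r*\mathcal{H}[\dot f]$. Since $\mathcal{H}[\dot f]\in L^p$ by \eqref{ceq-2.4}, $F\in L^p$ and $\widehat F(k)=m_r(k)\,|k|\widehat f(k)=(1-r^{|k|})\widehat f(k)$. These are exactly the Fourier coefficients of $f-\mathcal{P}_r[f]$, which is also in $L^p$. Uniqueness of Fourier coefficients for $L^1$ functions then gives $f-\mathcal{P}_r[f]=F$ a.e.

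\emph{Step 3: conclude.} Minkowski's integral inequality (Lemma \ref{lem-D}), i.e. Young's inequality $L^1*L^p\subset L^p$, together with \eqref{ceq-2.4} yields
$$
\|f-\mathcal{P}_r[f]\|_{L^p}\le \frac1{2\pi}\|K_r\|_{L^1}\|\mathcal{H}[\dot f]\|_{L^p}\le 4(1-r)C_1\|\dot f\|_{L^p}.
$$
So $C_4=4C_1(p)$ works.

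The main obstacle is rigor in Step 1: producing a genuine $L^1$ kernel with the prescribed multiplier and interchanging the $ds$-integral with Fourier coefficients. The representation via $\int_r^1 s^{-1}(\mathcal{P}_s-1)\,ds$ handles this, because all integrands are absolutely integrable. It is also the only place where the restriction $r\ge1/2$ is used, to keep $1/s$ bounded. The identification in Step 2 relies on \eqref{ceq-2.5} being an honest statement that the Fourier coefficients of $\mathcal{H}[\dot f]$ are $|k|\widehat f(k)$, which holds since $\dot f\in L^p$ with $p>1$.
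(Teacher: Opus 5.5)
Your proof is correct and is essentially the paper's argument. Your $\frac{1}{2\pi}K_r*\mathcal{H}[\dot f]$ equals the paper's $\int_r^1\mathcal{P}_\rho[\mathcal{H}[\dot f]]\,\frac{d\rho}{\rho}$: the constant subtracted in $K_r$ has no effect because $\mathcal{H}[\dot f]$ has zero mean. From there both proofs use Minkowski/Young, the Hilbert transform bound \eqref{ceq-2.4}, and $r\ge 1/2$ to control the $\rho$-integral. The differences are cosmetic: you identify the two sides by uniqueness of Fourier coefficients rather than by interchanging the series with the $\rho$-integral, and your cruder kernel bound gives $C_4=4C_1$ instead of the paper's $2C_1$.
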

\begin{proof}
For any $\theta\in \mathbb{R}$, $r\in[1/2,1)$ and $f\in W^{1,p}(\mathbb{T})$ with $p\in(1,\infty)$,
combining \eqref{Fourier-equality} and \eqref{P_r[f]-Fourier}, we obtain
\begin{equation*}
 f (e^{i\theta})- \mathcal{P}_{r}[f](e^{i\theta})
 =\sum_{k=-\infty}^{\infty}\widehat f(k)( 1-r^{|k|}) e^{ik\theta}
 =\sum_{k=-\infty}^{\infty}\int_r^1  |k|\widehat f(k)\rho^{|k|-1} e^{ik\theta} \,d\rho.
\end{equation*}
Since the absolute convergence of the Fourier series for functions in  $W^{1,p}(\mathbb{T})$ with $p\in(1,\infty)$ (\cite[Theorem 9.5]{serov2017}) guarantees that
\begin{align*}
\sum_{k=-\infty}^{\infty}\int_r^1 | k \widehat{f}(k)
\rho^{|k|-1}e^{ik\theta}|\,d\rho =
	\sum_{k=-\infty}^{\infty}|\widehat{f}(k)|
	\bigl(1-r^{|k|}\bigr) \leq
	\sum_{k=-\infty}^{\infty}|\widehat{f}(k)|<\infty,
\end{align*}
we deduce from \cite[p. 55]{folland}  that
 \begin{equation*}
\sum_{k=-\infty}^{\infty}\int_{r}^{1} |k| \widehat{f}(k) \rho^{|k|-1}e^{ik\theta} \,d\rho
  =\int_r^1\sum_{k=-\infty}^{\infty}|k|\widehat{f}(k)
\rho^{|k|-1}e^{ik\theta} \,d\rho.
\end{equation*}
Therefore,
 \begin{equation*}
 f (e^{i\theta})- \mathcal{P}_{r}[f](e^{i\theta})
=\int_r^1 \sum_{k=-\infty}^{\infty} |k| \widehat{f}(k)
\rho^{|k|-1}e^{ik\theta} \,d\rho.
 \end{equation*} 
 Further, by \eqref{ceq-2.5} and \eqref{P_r[f]-Fourier}, we obtain that
 \begin{align*}
 \mathcal{P}_\rho[\mathcal{H}[ \dot{f}]](e^{i\theta})
 &=
 \sum_{k=-\infty}^{\infty} |k| \widehat{f}(k)
\rho^{|k|}e^{ik\theta},
 \end{align*}
which implies that
  \begin{equation*}
    f (e^{i\theta})- \mathcal{P}_{r}[f](e^{i\theta})
    = \int_{r}^{1}\mathcal{P}_\rho[\mathcal{H}[ \dot{f}]](e^{i\theta})\,\frac{  d\rho}{\rho}.
\end{equation*} 
This, together with  Lemma \ref{lem-D},
means that
 \begin{align}\label{ceq-3.3}
 \left\|f-\mathcal{P}_r[f] \right\|_{L^p(\mathbb{T})}
 &=
 \left(\int_{0}^{2\pi} \left|\int_{r}^{1}\mathcal{P}_\rho[\mathcal{H}[ \dot{f}]] (e^{i\theta}) \frac{d\rho}{\rho}\right|^{p}
 d\theta\right)^{\frac{1}{p}}\\\nonumber
 &\leq
 \int_{r}^{1} \left\|\mathcal{P}_{\rho}[\mathcal{H}[ \dot{f}]  \right \|_{L^{p}(\mathbb{T})}\frac{d\rho }{\rho} .
 \end{align}
 
By \eqref{ceq-2.4} and Lemma \ref{lem-E}, we see that  
\begin{align*}
 \int_{r}^{1} \left\|\mathcal{P}_\rho[\mathcal{H}[ \dot{f}]] \right  \|_{L^{p}(\mathbb{T})}\frac{d\rho }{\rho}
 \leq
 \left \|\mathcal{H}[ \dot{f}]   \right \|_{L^p(\mathbb{T})} \log\frac{1}{r}
 \leq
 C_{1}\left\|\dot{f} \right\|_{L^p(\mathbb{T})}\log\frac{1}{r},
\end{align*}
where  $C_{1}=C_{1}(p)$ is the constant from \eqref{ceq-2.4}. 
  Notice that for any $r\in[1/2,1)$,
  $$
 \log\frac{1}{r}
 \leq
 \frac{1-r}{r}
 \leq
 2(1-r).
$$
Then we deduce from \eqref{ceq-3.3} and the above inequalities that
\begin{equation*}
 \|f-\mathcal{P}_r[f]\|_{L^p(\mathbb{T})}
 \leq
 2 (1-r) C_1\|\dot{f}\|_{L^p(\mathbb{T})}.
\end{equation*}
By letting $C_{4}=2C_{1}$, we see that the lemma is true.
 \end{proof}

Now, by using Lemma \ref{clem-3.2},
we are ready to show that the inequality  \eqref{ceq-3.2} holds true for all $f\in \mathcal{C}(\mathbb{T})\cap\Lambda_{\omega,p}(\mathbb{T})$
	and   $r\in(0,1)$.

\begin{lem}\label{clem-3.3}
Let $p\in(1,\infty)$ and $\omega$ be a  majorant. Then there exists a positive constant $C_{5}=C_{5}(p)$ such  that  for every
$f\in \mathcal{C}(\mathbb{T})\cap\Lambda_{\omega,p}(\mathbb{T})$ and every $r\in(0,1)$,
\begin{equation}\label{ceq-3.4}
 \|f-\mathcal{P}_r[f]\|_{L^p(\mathbb{T})}
 \leq
 C_{5}\|f\|_{\Lambda_{\omega,p}(\mathbb{T}),s}\omega(1-r).
\end{equation}

\end{lem}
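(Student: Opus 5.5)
The plan is a K-functional (Steklov-average) argument. We split $f$ into a smooth part, handled by Lemma \ref{clem-3.2}, and a rough part, handled by the contraction property in Lemma \ref{lem-E}.

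First dispose of $r\in(0,1/2)$. Since $\frac{1}{2\pi}\int_0^{2\pi}\mathcal{P}_r(\eta)\,d\eta=1$, we have
$$f(e^{i\theta})-\mathcal{P}_r[f](e^{i\theta})=\frac{1}{2\pi}\int_0^{2\pi}\mathcal{P}_r(\eta)\bigl(f(e^{i\theta})-f(e^{i(\theta-\eta)})\bigr)\,d\eta .$$
Minkowski's inequality (Lemma \ref{lem-D}) and \eqref{ceq-1.4} then give
$$\|f-\mathcal{P}_r[f]\|_{L^p(\mathbb{T})}\le \sup_{h}\|\Delta_h f\|_{L^p(\mathbb{T})}\le \|f\|_{\Lambda_{\omega,p}(\mathbb{T}),s}\,\omega(2).$$
Because $\omega$ is increasing and $\omega(t)/t$ is non-increasing, $\omega(2)\le 4\,\omega(1/2)\le 4\,\omega(1-r)$ for $r<1/2$. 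So this range is trivial.

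For $r\in[1/2,1)$, set $\delta=1-r$ and define the Steklov average
$$g(e^{i\theta})=\frac{1}{\delta}\int_0^{\delta} f\bigl(e^{i(\theta+t)}\bigr)\,dt .$$
Since $f$ is continuous, $g$ is $\mathcal{C}^1$ and lies in $W^{1,p}(\mathbb{T})$, with
$$\dot g(e^{i\theta})=\frac{1}{\delta}\bigl(f(e^{i(\theta+\delta)})-f(e^{i\theta})\bigr)=\frac{1}{\delta}\Delta_\delta f(e^{i\theta}).$$
Hence $\|\dot g\|_{L^p(\mathbb{T})}\le \delta^{-1}\|f\|_{\Lambda_{\omega,p}(\mathbb{T}),s}\,\omega(|e^{i\delta}-1|)\le \delta^{-1}\|f\|_{\Lambda_{\omega,p}(\mathbb{T}),s}\,\omega(\delta)$, using $|e^{i\delta}-1|\le\delta$ and monotonicity of $\omega$.

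Also, by Minkowski's integral inequality,
$$\|f-g\|_{L^p(\mathbb{T})}\le \frac{1}{\delta}\int_0^\delta\|\Delta_t f\|_{L^p(\mathbb{T})}\,dt\le \|f\|_{\Lambda_{\omega,p}(\mathbb{T}),s}\,\omega(\delta).$$

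Now split
$$f-\mathcal{P}_r[f]=(f-g)-\mathcal{P}_r[f-g]+\bigl(g-\mathcal{P}_r[g]\bigr),$$
which uses the linearity of $\mathcal{P}_r$. By Lemma \ref{lem-E}, the first two terms together have $L^p$ norm at most $2\|f-g\|_{L^p(\mathbb{T})}\le 2\|f\|_{\Lambda_{\omega,p}(\mathbb{T}),s}\,\omega(1-r)$. By Lemma \ref{clem-3.2}, the last term is bounded by
$$C_4(1-r)\|\dot g\|_{L^p(\mathbb{T})}\le C_4\|f\|_{\Lambda_{\omega,p}(\mathbb{T}),s}\,\omega(1-r).$$
Taking $C_5=\max\{4,\,2+C_4\}$ gives \eqref{ceq-3.4}.

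The only step needing care is verifying $g\in W^{1,p}(\mathbb{T})$ with the stated derivative, so that Lemma \ref{clem-3.2} applies. This is routine because $f$ is continuous: by the fundamental theorem of calculus, $g$ is $\mathcal{C}^1$ and periodic, so $g(e^{i\theta})=\int_0^\theta \dot g+g(1)$ with $\dot g$ continuous, hence in $L^p(\mathbb{T})$. The substantive analytic input, the Hilbert transform bound, is already encapsulated in Lemma \ref{clem-3.2}. That is exactly where $p\in(1,\infty)$ is used, and no regularity of $\omega$ beyond being a majorant is required.
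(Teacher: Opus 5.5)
Your proof is correct and follows the paper's argument essentially verbatim for $r\in[1/2,1)$: the same Steklov average with $t=1-r$, the same three-term splitting, Lemma \ref{clem-3.2} for the smooth part and the contraction property for the remainder. For $r\in(0,1/2)$ you apply Minkowski's inequality directly against the Poisson probability measure, whereas the paper first compares $f$ with its mean $f_{\mathbb{T}}$; this is a harmless shortcut that gives the constant $4$ there instead of the paper's $8$.
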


\begin{proof}
In order to prove the lemma, first we assume that  $r\in[1/2,1)$. 
Put  $t=1-r$. Then $t \in(0,1/2]$.
 For any $t \in(0,1/2]$,   $\theta\in \mathbb{R}$ and $f\in \mathcal{C}(\mathbb{T})\cap\Lambda_{\omega,p}(\mathbb{T})$ with $p\in(1,\infty)$,
define
\begin{equation}\label{ceq-3.5}
 \mathcal{S}_{t}[f](e^{i\theta})
 =:
 \frac{1}{t}\int_{0}^{t}f(e^{i(\theta+\eta)}) \,d\eta
 =
 \frac{1}{t}\int_{\theta}^{\theta+t}f(e^{i\eta})\,d\eta.
\end{equation}
Since $f\in \mathcal{C}(\mathbb{T})$, we easily obtain that $\mathcal{S}_{t}[f] \in W^{1,p}(\mathbb{T})$
and
$$
\frac{d}{d\theta} \mathcal{S}_{t}[f](e^{i\theta})
 =
 \frac{f(e^{i(\theta+t)}) -f(e^{i\theta})}{t}.
$$
For convenience,   we write
$$  (\mathcal{S}_{t}[f])'(e^{i\theta})=\frac{d}{d\theta} \mathcal{S}_{t}f(e^{i\theta}).$$
Then we have
 \begin{equation*} 
 t \left\|(\mathcal{S}_{t}[f])' \right\|_{L^p(\mathbb{T})}
 =\left(\int_0^{2\pi}\left|  f(e^{i(\theta+t)})  - f(e^{i \theta }) \right|^{p}\,d\theta\right)^{\frac{1}{p}}.
\end{equation*}
By combining this equality and  \eqref{ceq-1.3}, we find that for any $p\in(1,\infty)$,
\begin{equation}\label{ceq-3.6}
 t  \left\|(\mathcal{S}_{t}[f])' \right \|_{L^p(\mathbb{T})}
 =   \left\|\Delta_{t}f   \right\|_{L^{p}(\mathbb{T})}
 \leq
 \sup_{|h|\leq t}\|\Delta_{h} f\|_{L^p(\mathbb{T})}.
\end{equation}

Recall that
 $$\mathcal{P}_{r}[\mathcal{S}_{t}[f]-f]=\mathcal{P}_{r}[\mathcal{S}_{t}[f]]-\mathcal{P}_{r}[f].$$
 Then we deduce from Minkowski's inequality 
 that
\begin{align}\label{ceq-3.7}
 \left\|f-\mathcal{P}_{r}[f] \right \|_{L^p(\mathbb{T})}
\leq&
  \left\|f-\mathcal{S}_{t}[f]   \right  \|_{L^p(\mathbb{T})}
 + \left\|\mathcal{S}_{t}[f]-\mathcal{P}_{r}[\mathcal{S}_{t}[f]] \right  \|_{L^p(\mathbb{T})}\\ \nonumber
& + \left\|\mathcal{P}_{r}[\mathcal{S}_{t}[f]-f] \right  \|_{L^p(\T)}\\ \nonumber
 \leq&
 2 \left\|f-\mathcal{S}_{t}[f]   \right  \|_{L^p(\mathbb{T})}
 +t C_{4}  \left\|(\mathcal{S}_{t}[f])'    \right\|_{L^p(\mathbb{T})},~\text{(by Lemmas \ref{lem-E} and  \ref{clem-3.2})}
\end{align}
 where $C_{4}=C_{4}(p)$ is the constant from Lemma \ref{clem-3.2} and
\begin{align}\label{ceq-3.8}
\quad \|f-\mathcal{S}_{t}[f]\|_{L^p(\mathbb{T})}
 &=\left(\int_{0}^{2\pi}\left|\frac{1}{t}\int_0^{t}\bigl(f( e^{i\theta})-f(e^{i(\theta+\eta)} ) \bigr)\,d\eta\right|^{p}d\theta\right)^{\frac{1}{p}} \qquad\quad \;\text{(by   \eqref{ceq-3.5})} \\ \nonumber
 &\leq\frac{1}{t}\int_{0}^{t}\left(\int_0^{2\pi}\left|f( e^{i\theta})-f(e^{i(\theta+\eta)} ) 
 \right|^{p}\,d\theta\right)^{\frac{1}{p}}d\eta \quad \;\quad\text{(by  Lemma \ref{lem-D})}   \\ \nonumber
 &\leq
 \sup_{|h|\leq t}\|\Delta_{h} f\|_{L^p(\mathbb{T})}.\qquad\qquad\qquad\;\qquad\quad\quad\quad\quad\qquad\text{(by  \eqref{ceq-1.3})}
\end{align}
Substitute \eqref{ceq-3.6} and \eqref{ceq-3.8} into \eqref{ceq-3.7}. 
Then we arrive at the following inequality:
\begin{equation}\label{ceq-3.9}
 \|f-\mathcal{P}_r[f]\|_{L^p(\mathbb{T})}
 \leq
(2+C_{4}) \sup_{|h|\leq t}\|\Delta_{h} f\|_{L^p(\mathbb{T})}.
\end{equation}

Since   $|e^{ih}-1|\leq|h|$ for any $h\in \mathbb{R}$,
it follows directly from
\eqref{ceq-1.4} and the increasing property of $\omega$ that
\begin{equation*}
\sup_{|h|\leq t}\|\Delta_h f\|_{L^p(\mathbb{T})}
 \leq
 \|f\|_{\Lambda_{\omega,p}(\mathbb{T}),s} \sup_{|h|\leq t}\omega(|e^{ih}-1|)
  \leq
 \|f\|_{\Lambda_{\omega,p}(\mathbb{T}),s}\omega(t).
\end{equation*}
This, together with \eqref{ceq-3.9}, yields that    for any $r\in[1/2,1)$,
\begin{equation}\label{ceq-3.10}
 \|f-\mathcal{P}_r[f]\|_{L^p(\mathbb{T})}
 \leq
(2+C_{4})  \|f\|_{\Lambda_{\omega,p}(\mathbb{T}),s}\omega(1-r).
\end{equation}
\smallskip

Second, we consider the case $r\in(0,1/2)$. 
For   any
$f\in \mathcal{C}(\mathbb{T})\cap\Lambda_{\omega,p}(\mathbb{T})$ with $p\in(1,\infty)$, let
$$
f_{\mathbb{T}}=:\frac{1}{2\pi}\int_{-\pi}^{\pi}f(e^{i\theta})\,d\theta.
$$
Notice that \eqref{ceq-1.5} guarantees that 
$$f_{\mathbb{T}}-\mathcal{P}_r[f](e^{i\theta})=\mathcal{P}_r[f_{\mathbb{T}}-f](e^{i\theta}),$$
where $r\in(0,1/2)$ and $\theta\in \mathbb{R}$.
Then we deduce from Minkowski's inequality and Lemma \ref{lem-E}  that
\begin{equation}\label{ceq-3.11}
 \|f-\mathcal{P}_r[f]\|_{L^p(\mathbb{T})}
  \leq  \|f-f_{\mathbb{T}}\|_{L^p(\mathbb{T})}+ \|f_{\mathbb{T}}-\mathcal{P}_r[f]\|_{L^p(\mathbb{T})}
 \leq
 2\|f-f_{\mathbb{T}}\|_{L^p(\mathbb{T})} .
\end{equation}

By simple calculations, we know that
$$
f_{\mathbb{T}}= \frac{1}{2\pi}\int_{-\pi}^{\pi}f(e^{i\theta})\,d\theta
=\frac{1}{2\pi}\int_{-\pi}^{\pi}f(e^{i(\eta+h)})\,dh,
$$
where $\eta\in \mathbb{R}$.
Then
\begin{align}\label{ceq-3.12}
 \|f-f_{\mathbb{T}}\|_{L^p(\mathbb{T})}
 &=\left(\int_{-\pi}^{\pi}\left|\frac{1}{2\pi}\int_{-\pi}^{\pi}
\left(f(e^{i\eta})-f(e^{i(\eta+h)})\right) dh \right|^{p}d\eta\right)^{\frac{1}{p}}\\\nonumber
 &\leq
\frac{1}{2\pi}\int_{-\pi}^{\pi}
\left(\int_{-\pi}^{\pi}
\left|f(e^{i\eta})-f(e^{i(\eta+h)})\right|^{p}\,d\eta\right)^{\frac{1}{p}} dh  \;\quad\text{(by Lemma \ref{lem-D})}\\\nonumber
  &=
 \frac{1}{2\pi}\int_{-\pi}^{\pi}
 \|\Delta_h f\|_{L^p(\mathbb{T})}\,dh . \quad   \quad \quad \quad\quad \quad  \quad \quad \quad \quad\quad\text{(by \eqref{ceq-1.3})}
\end{align}
For any $h\in [-\pi,\pi]$, \eqref{ceq-1.4} guarantees that
\begin{align}\label{ceq-3.13}
 \|\Delta_h f\|_{L^p(\mathbb{T})}\leq \|f\|_{\Lambda_{\omega,p}(\mathbb{T}),s}\omega(|e^{ih}-1|)\leq \|f\|_{\Lambda_{\omega,p}(\mathbb{T}),s}\omega(2).
\end{align}
Then the combination of \eqref{ceq-3.12} and \eqref{ceq-3.13} gives that
\begin{equation}\label{ceq-3.14}
 \|f-f_{\mathbb{T}}\|_{L^p(\mathbb{T})}
 \leq\|f\|_{\Lambda_{\omega,p}(\mathbb{T}),s}\omega(2).
\end{equation}

By substituting \eqref{ceq-3.14} into \eqref{ceq-3.11}, we find that
$$
 \left\|f-\mathcal{P}_{r}[f]\right\|_{L^p(\mathbb{T})}
 \leq
 2\|f\|_{\Lambda_{\omega,p}(\mathbb{T}),s}\omega(2).
$$
Because $1-r\in(1/2,1)$, $\omega(x)/x$ is non-increasing in $(0,\infty)$ and
$\omega$ is increasing in $[0,\infty)$, we obtain that
$$
 \omega(2)\leq4\omega(1/2)\leq4\omega(1-r).
$$
Hence,  for any $r\in(0,1/2)$,
\begin{equation}\label{ceq-3.15}
\left \|f-\mathcal{P}_r[f]\right\|_{L^p(\mathbb{T})}
 \leq
 8\|f\|_{\Lambda_{\omega,p}(\mathbb{T}),s}\omega(1-r).
\end{equation}

Now, we conclude from \eqref{ceq-3.10} and \eqref{ceq-3.15} that  \eqref{ceq-3.4} 
holds  for all $r\in(0,1)$ with $C_{5}=8+C_{4}$.
\end{proof}

By  \eqref{ceq-1.2}, Lemmas \ref{clem-3.1} and \ref{clem-3.3}, we see that if $\varphi \in \mathcal{C}(\mathbb{T})\cap \Lambda_{\omega, p}(\mathbb{T})$ with $p \in (1, \infty)$, then 
$\mathcal{P}[\varphi]\in \Lambda_{\omega,p}(\overline{\mathbb{D}})$
and 
	\begin{equation*}
\left \|\mathcal{P}[\varphi] \right\|_{\Lambda_{\omega,p} (\overline{\mathbb{D}}),s} \leq C_{3} \|\varphi\|_{\Lambda_{\omega,p}(\mathbb{T}),s},
\end{equation*}
 where $ C_{3} $ is the constant from Lemma \ref{clem-3.1}.
Hence, $(b_{1})$ holds  for every majorant $\omega$.
This completes the proof of Theorem \ref{thm-1.1} for $p\in(1,\infty)$. 

\medskip

\noindent $\mathbf{Case~2}.$ Let $p\in\{1,\infty\}$.
\medskip

We only need to prove the case $p=1$, as the case $p=\infty$ follows directly from Theorem \ref{thm-C}.
We divide the proof of this case into several lemmas, as detailed below. 

 First, we show that  for every majorant $\omega$, the space $X_\omega(\mathbb{T})=: \mathcal{C}(\mathbb{T})\cap\Lambda_{\omega,1}(\mathbb{T})$ becomes a Banach space
when endowed with the norm
\begin{equation}\label{ceq-4.1}
	\|f\|_{X_\omega(\mathbb{T})}
	= \|f\|_{ \max}
	+
	\|f\|_{\Lambda_{\omega,1}(\mathbb{T}),s} ,
\end{equation}
where $ \|f\|_{ \max}=\max_{\theta\in \mathbb{R}} |f(e^{i\theta})|$.
 
  \begin{lem}\label{clem-4.1}
For every majorant $\omega$, $X_\omega(\mathbb{T})$ is a Banach space.
\end{lem}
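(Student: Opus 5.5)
We need to show that $X_\omega(\mathbb{T})$, with the norm \eqref{ceq-4.1}, is a normed space and is complete. The norm axioms are routine. The quantity $\|\cdot\|_{\max}$ is a norm on $\mathcal{C}(\mathbb{T})$. The quantity $\|\cdot\|_{\Lambda_{\omega,1}(\mathbb{T}),s}$ is a seminorm, because $\Delta_h$ is linear, the $L^1$ norm satisfies the triangle inequality, and a supremum of seminorms is a seminorm, by \eqref{ceq-1.4}. The sum of a norm and a seminorm is a norm, and it is finite on $X_\omega(\mathbb{T})$ by definition of $\Lambda_{\omega,1}(\mathbb{T})$. So the whole content of the lemma is completeness.

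For completeness, take a Cauchy sequence $\{f_n\}$ in $X_\omega(\mathbb{T})$. Since $\|\cdot\|_{\max}\le\|\cdot\|_{X_\omega(\mathbb{T})}$, the sequence is Cauchy in $(\mathcal{C}(\mathbb{T}),\|\cdot\|_{\max})$. That space is complete, so $f_n\to f$ uniformly for some $f\in\mathcal{C}(\mathbb{T})$. The sequence is also Cauchy in the seminorm, so $M:=\sup_n\|f_n\|_{\Lambda_{\omega,1}(\mathbb{T}),s}<\infty$.

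First we show $f\in\Lambda_{\omega,1}(\mathbb{T})$. Fix $h$ with $0<|h|\le\pi$. Uniform convergence gives $\Delta_h f_n\to\Delta_h f$ uniformly, hence in $L^1(\mathbb{T})$. Therefore
$$\|\Delta_h f\|_{L^1(\mathbb{T})}=\lim_{n\to\infty}\|\Delta_h f_n\|_{L^1(\mathbb{T})}\le M\,\omega(|e^{ih}-1|).$$
By \eqref{ceq-1.4}, this gives $\|f\|_{\Lambda_{\omega,1}(\mathbb{T}),s}\le M$, so $f\in X_\omega(\mathbb{T})$.

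Next we show convergence in the full norm. Given $\varepsilon>0$, choose $N$ so that $\|f_n-f_m\|_{\Lambda_{\omega,1}(\mathbb{T}),s}<\varepsilon$ for all $n,m\ge N$. For each fixed $h$, apply the same limiting argument to $f_n-f_m$ with $m\to\infty$. This gives
$$\|\Delta_h(f_n-f)\|_{L^1(\mathbb{T})}\le\varepsilon\,\omega(|e^{ih}-1|)\quad\text{for all } n\ge N,$$
and taking the supremum over $h$ yields $\|f_n-f\|_{\Lambda_{\omega,1}(\mathbb{T}),s}\le\varepsilon$. Combined with $\|f_n-f\|_{\max}\to0$, we get $\|f_n-f\|_{X_\omega(\mathbb{T})}\to0$.

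The only point needing care is interchanging the limit with the supremum over $h$. This is handled by bounding pointwise in $h$ first and only then taking the supremum, which works because each bound is uniform in $h$. Apart from that, the argument is the standard completeness proof for Lipschitz-type spaces.
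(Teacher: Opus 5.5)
Your proof is correct and takes essentially the same route as the paper. You get uniform convergence from the $\|\cdot\|_{\max}$ part, then pass to the limit in $\|\Delta_h(f_n-f_m)\|_{L^1(\mathbb{T})}\le\varepsilon\,\omega(|e^{ih}-1|)$ for each fixed $h$ before taking the supremum. The only differences are cosmetic: you check the norm axioms explicitly, and you obtain $f\in\Lambda_{\omega,1}(\mathbb{T})$ from the uniform bound $M$, whereas the paper deduces it from $\|f-f_m\|_{\Lambda_{\omega,1}(\mathbb{T}),s}\le\varepsilon$ and the triangle inequality.
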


\begin{proof}
 Let $\{f_n\}$ be a Cauchy sequence in $X_\omega(\mathbb{T})$.
For every $m,n\in \mathbb{N}$, it follows from  
$$
	\|f_n-f_m\|_{\max}
	\leq
	\|f_n-f_m\|_{X_\omega(\mathbb{T})} 
$$
  that   $\{f_n\}$ is also a Cauchy   sequence in  $\mathcal{C}(\mathbb{T})$.
Then there exists a mapping $f\in \mathcal{C}(\mathbb{T})$ such that
$\{f_n\}$ converges to $f$ in $\mathcal{C}(\mathbb{T})$,
i.e.,
\begin{equation}\label{ceq-4.2}
\lim_{n\rightarrow\infty} \|f_n-f\|_{\max}=0.
\end{equation}

Let $\varepsilon>0$.
Because $\{f_n\}$ is a Cauchy sequence in $X_\omega(\mathbb{T})$,
  there exists a positive integer $n_{\varepsilon}$ such that 
$$
	\|f_n-f_m\|_{\Lambda_{\omega,1}(\mathbb{T}),s}
	\leq\|f_n-f_m\|_{X_\omega(\mathbb{T}) }
	<\varepsilon.
$$
whenever $n,m\geq n_{\varepsilon}$.
Thus, for every $h\in[-\pi,\pi]$, we deduce from \eqref{ceq-1.4} that
\begin{equation}\label{ceq-4.3}
	\|\Delta_h(f_n-f_m)\|_{L^1(\mathbb{T})}
	\leq
\| f_n-f_m \|_{\Lambda_{\omega,1}(\mathbb{T}),s}\omega(|e^{ih}-1|)
	\leq \varepsilon\omega(|e^{ih}-1|).
\end{equation}
 Since \eqref{ceq-4.2} implies that $\{f_n\}$ converges uniformly to $f$
 on $\mathbb{T}$,
 we see that for each fixed $h\in[-\pi,\pi]$ and $m\in \mathbb{N}$,
 the  sequence  $\{ \Delta_h(f_n-f_m) \}_{n=1}^{\infty}$ also converges uniformly to
  a continuous function $\Delta_h(f-f_m)$
 on $\mathbb{T}$.
By letting $n\rightarrow\infty$ in \eqref{ceq-4.3},
we obtain that for every $h\in[-\pi,\pi]$ and $m\geq n_{\varepsilon}$,
\begin{equation*}
	\|\Delta_h(f-f_m)\|_{L^1(\mathbb{T})}
	\leq
	\varepsilon\omega(|e^{ih}-1|).
\end{equation*}
Combining the above inequality with \eqref{ceq-1.4}  yields that for every $\varepsilon>0$ and $m \geq n_{\varepsilon}$,
\begin{equation}\label{ceq-4.4}
	\|f-f_m\|_{\Lambda_{\omega,1}(\mathbb{T}),s}
	\leq\varepsilon.
\end{equation}
Therefore,
\begin{equation*}
	\| f\|_{\Lambda_{\omega,1}(\mathbb{T}),s}
\leq 	\|f-f_m\|_{\Lambda_{\omega,1}(\mathbb{T}),s}+	\|f_m\|_{\Lambda_{\omega,1}(\mathbb{T}),s}
\leq \varepsilon+\|f_m\|_{\Lambda_{\omega,1}(\mathbb{T}),s}  
\end{equation*}
whenever $m\geq n_{\varepsilon}$.
Recall that $f\in \mathcal{C}(\mathbb{T})$.
Then $f\in X_\omega(\mathbb{T})$.
This, together with  \eqref{ceq-4.2} and \eqref{ceq-4.4},
implies  that $\{f_n\}$ converges to $f$ in $X_\omega(\mathbb{T})$, and so, $ X_\omega(\mathbb{T})$  is a Banach space.
\end{proof}

 \begin{lem}\label{clem-4.2}
Suppose that $\omega$ is a majorant and  $(b_1)$ holds     for $p=1$. 
Then there exists a positive constant $C_{6}=C_{6}(\omega) $ such that
	\begin{equation}\label{ceq-4.5}
		\left\|f-\mathcal{P}_{1-r}[f] \right \|_{L^1(\mathbb{T})}
		\leq
		C_{6}\omega(r)
		\|f\|_{X_\omega(\mathbb{T})}.
	\end{equation}
for every $f\in X_{\omega}(\mathbb{T})$ and every $r\in(0,1/2)$.
\end{lem}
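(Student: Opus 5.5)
The plan is to apply the uniform boundedness principle on the Banach space $X_\omega(\mathbb{T})$ of Lemma \ref{clem-4.1}. For each $r\in(0,1/2)$ we introduce the linear operator
$$
T_r f := \frac{f-\mathcal{P}_{1-r}[f]}{\omega(r)}, \qquad f\in X_\omega(\mathbb{T}),
$$
and regard it as a map $T_r:X_\omega(\mathbb{T})\to L^1(\mathbb{T})$. The goal is to show that $\sup_{0<r<1/2}\|T_r\|<\infty$; this is exactly \eqref{ceq-4.5} with $C_6=\sup_r\|T_r\|$.

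\textbf{Step 1: each $T_r$ is bounded.} By Lemma \ref{lem-E},
$$
\|f-\mathcal{P}_{1-r}[f]\|_{L^1(\mathbb{T})}\le 2\|f\|_{L^1(\mathbb{T})}\le 4\pi\|f\|_{\max}\le 4\pi\|f\|_{X_\omega(\mathbb{T})}.
$$
Hence $\|T_r\|\le 4\pi/\omega(r)<\infty$ for each fixed $r$. Here $\omega(r)>0$ for $r>0$, since otherwise $\omega\equiv 0$ near $0$ and then the monotonicity of $\omega(t)/t$ forces $\omega\equiv0$, a trivial case.

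\textbf{Step 2: pointwise boundedness from $(b_1)$.} Fix $f\in X_\omega(\mathbb{T})$. Hypothesis $(b_1)$ with $p=1$ gives $\mathcal{P}[f]\in\Lambda_{\omega,1}(\overline{\mathbb{D}})$ with some constant $C_f$. Take $z_1=1$ and $z_2=1-r$. Since $\mathcal{P}[f]=f$ on $\mathbb{T}$,
$$
\mathscr{L}_1[\mathcal{P}[f]](1,1-r)=\int_0^{2\pi}\bigl|f(e^{i\eta})-\mathcal{P}[f]((1-r)e^{i\eta})\bigr|\,d\eta=\|f-\mathcal{P}_{1-r}[f]\|_{L^1(\mathbb{T})}.
$$
This is at most $C_f\,\omega(r)$. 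Therefore $\sup_{0<r<1/2}\|T_rf\|_{L^1(\mathbb{T})}\le C_f<\infty$ for every $f\in X_\omega(\mathbb{T})$.

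\textbf{Step 3: conclusion.} Since $X_\omega(\mathbb{T})$ is Banach by Lemma \ref{clem-4.1}, the Banach--Steinhaus theorem yields $\sup_{0<r<1/2}\|T_r\|=:C_6<\infty$, and \eqref{ceq-4.5} follows. The constant depends only on the family $\{T_r\}$, hence only on $\omega$.

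The only delicate point is Step 2. We must be sure that $(b_1)$ is applied to exactly the space $X_\omega(\mathbb{T})=\mathcal{C}(\mathbb{T})\cap\Lambda_{\omega,1}(\mathbb{T})$, which it is, and that the constant supplied by $(b_1)$ may depend on $f$. That dependence is precisely why the uniform boundedness principle is needed rather than a direct estimate. The rest is routine.
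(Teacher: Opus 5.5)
Your proposal is correct and follows the paper's proof essentially step by step. It uses the same operators $\mathcal{T}_r[f]=(f-\mathcal{P}_{1-r}[f])/\omega(r)$ with the a priori bound $4\pi/\omega(r)$ from Lemma \ref{lem-E}, gets pointwise boundedness from $(b_1)$ with the points $1-r$ and $1$, and concludes by the uniform boundedness principle on the Banach space $X_\omega(\mathbb{T})$ of Lemma \ref{clem-4.1}. Your remark that $\omega(r)>0$ for $r>0$ is a harmless point the paper leaves implicit.
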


\begin{proof}
In order to prove \eqref{ceq-4.5}, we define
	\begin{equation}\label{ceq-4.6}
	\mathcal{T}_r[f](e^{i\theta})
	=
	\frac{f(e^{i\theta})-\mathcal{P}_{1-r}[f](e^{i\theta})}{\omega(r)} 
	\end{equation}
for every  $r\in(0,1/2)$, $\theta\in \mathbb{R}$ and $f\in X_{\omega}(\mathbb{T})$. 
Since Lemma \ref{lem-E} guarantees that
$$
 \left \|\mathcal{T}_r[f] \right\|_{L^1(\mathbb{T})}
\leq \frac{1}{\omega(r)}\left(\| f\|_{L^1(\mathbb{T})} + \left \|\mathcal{P}_{1-r}[f] \right\|_{L^1(\mathbb{T})}\right)
\leq \frac{2}{\omega(r)}\| f\|_{L^1(\mathbb{T})}
\leq
		\frac{4\pi}{\omega(r)}\|f\|_{X_\omega(\mathbb{T})},
$$	
we know that
$\mathcal{T}_r $ is a bounded  linear operator from $X_\omega(\mathbb{T})$ to $L^1(\mathbb{T})$.
Then we see from  the uniform boundedness principle \cite[Theorem 2.2]{bre} and Lemma \ref{clem-4.1} 
that it   suffices to prove the following inequality
	\begin{equation}\label{ceq-4.7}
 \sup_{r\in(0,1/2)} \left \|\mathcal{T}_r [f] \right \|_{L^1( \mathbb{T})}<\infty 
	\end{equation}
holds for every $f\in X_{\omega}(\mathbb{T})$.

By the assumption that  $(b_1)$ holds   for $p=1$,
we know that for every $f\in X_{\omega}(\mathbb{T})$,
 there exists a positive constant $C_7=C_{7}(f,\omega)$  such that
	\begin{equation}\label{ceq-4.8}
 \mathcal L_{1}[\mathcal{P}[f]](z,w)
	=\int_{0}^{2\pi}\big|\mathcal{P}[f](ze^{i\eta})-\mathcal{P}[f](we^{i\eta})\big |d\eta
\leq
	C_{7}\omega(|z-w|) 
	\end{equation} 
for every $z,w\in  \overline{\mathbb{D}} $.
Letting $z=1-r$ and $w=1$ in \eqref{ceq-4.8} gives that
$$
	\left \|\mathcal{P}_{1-r}[f]-f \right \|_{L^1( \mathbb{T})}
=\int_{0}^{2\pi}\left|\mathcal{P}[f]\big ((1-r)e^{i\eta} \big)-f(e^{i\eta})\right|d\eta
	\leq
	C_{7}\omega(r).
$$
This, together with \eqref{ceq-4.6}  gives that
	\begin{equation*}
 \sup_{r\in(0,1/2)}  	\left \|\mathcal{T}_r [f]   \right\|_{L^1( \mathbb{T})}
\leq C_{7}
	\end{equation*}
whenever  $f\in X_{\omega}(\mathbb{T})$, and, so \eqref{ceq-4.7} holds true.

Now, we deduce from Lemma \ref{clem-4.1}, \eqref{ceq-4.7} and
  \cite[Theorem 2.2]{bre}  that 
$$
 \sup_{r\in(0,1/2)}
		\left  \|\mathcal{T}_r   \right\|_{X_\omega(\mathbb{T})\to L^1(\mathbb{T})}
	<\infty.
$$
By taking 
$$
C_{6}=\sup_{r\in(0,1/2)}
	\|\mathcal{T}_r\|_{X_\omega(\mathbb{T})\to L^1(\mathbb{T})},$$
we see that \eqref{ceq-4.5} holds.
\end{proof}

For every $N\in\mathbb{N}$,
define
\begin{equation}\label{ceq-4.9}
\Phi_N(e^{i \theta})=\text{sgn}\big(\sin (N\theta)\big) 
\end{equation}
on $\mathbb{T}$.
Then we have the following result.

\begin{lem}\label{clem-4.3}
	Let $\omega$ be a majorant.
 Then  for every $N\in\mathbb{N}$,
\begin{equation}\label{ceq-4.10}
	\| \Phi_N\|_{\Lambda_{\omega,1}( \mathbb{T}),s}
	\leq \frac{2\pi^{2}}{\omega(1/N)}.
\end{equation}
 \end{lem}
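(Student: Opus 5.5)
By \eqref{ceq-1.4}, proving \eqref{ceq-4.10} amounts to showing that for every $0<|h|\le\pi$,
$$
\|\Delta_h\Phi_N\|_{L^1(\mathbb T)}\le \frac{2\pi^2}{\omega(1/N)}\,\omega(|e^{ih}-1|).
$$
By symmetry ($\theta\mapsto\theta-h$) it suffices to treat $h\in(0,\pi]$. I will combine two elementary bounds on $\|\Delta_h\Phi_N\|_{L^1(\mathbb T)}$ with two regimes of $\omega$, according to whether $|e^{ih}-1|\le 1/N$ or $|e^{ih}-1|>1/N$.

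The first step is to estimate $\|\Delta_h\Phi_N\|_{L^1(\mathbb T)}$ directly. Since $|\Phi_N|\le 1$, we always have $|\Delta_h\Phi_N|\le 2$, hence the trivial bound $\|\Delta_h\Phi_N\|_{L^1(\mathbb T)}\le 4\pi$.

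For the refined bound, note that $\Phi_N$ is a step function taking values $\pm1$. It jumps at the $2N$ points $\theta_j=j\pi/N$, $j=0,\dots,2N-1$, in $[0,2\pi)$. The difference $\Phi_N(e^{i(\theta+h)})-\Phi_N(e^{i\theta})$ can be nonzero only if some jump point lies in $(\theta,\theta+h]$, up to a null set. The set of such $\theta$ is contained in $\bigcup_j[\theta_j-h,\theta_j)$, whose measure is at most $2Nh$. On that set $|\Delta_h\Phi_N|\le 2$, which gives $\|\Delta_h\Phi_N\|_{L^1(\mathbb T)}\le 4Nh$.

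Next I convert $h$ to $|e^{ih}-1|$. For $h\in(0,\pi]$ we have $|e^{ih}-1|=2\sin(h/2)\ge 2h/\pi$, so $h\le \frac{\pi}{2}|e^{ih}-1|$ and therefore $\|\Delta_h\Phi_N\|_{L^1(\mathbb T)}\le 2\pi N|e^{ih}-1|$.

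Then I split into the two cases, writing $s=|e^{ih}-1|$.

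If $s\le 1/N$: because $\omega(t)/t$ is non-increasing, $\omega(s)/s\ge N\omega(1/N)$, i.e. $Ns\le \omega(s)/\omega(1/N)$. The refined bound then gives $\|\Delta_h\Phi_N\|_{L^1(\mathbb T)}\le 2\pi\,\omega(s)/\omega(1/N)$.

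If $s>1/N$: because $\omega$ is increasing, $\omega(s)\ge\omega(1/N)$. The trivial bound then gives $\|\Delta_h\Phi_N\|_{L^1(\mathbb T)}\le 4\pi\le 4\pi\,\omega(s)/\omega(1/N)$.

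In both cases the constant is at most $4\pi\le 2\pi^2$, which yields \eqref{ceq-4.10}. The $2\pi^2$ in the statement presumably comes from a cruder counting of the jump set, so there is slack.

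The only mildly delicate step is the measure estimate of the set where the shift crosses a discontinuity. It needs care when $h$ exceeds the spacing $\pi/N$, but there the bound $2Nh$ exceeds $2\pi$ and the estimate becomes trivially true.
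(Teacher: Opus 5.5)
Your proposal is correct and follows essentially the paper's argument: the trivial bound $4\pi$, the jump-counting bound $4N|h|$, and the two monotonicity properties of $\omega$ applied in the regimes below and above $1/N$. The one real difference is the order of steps. You convert $h$ to $|e^{ih}-1|$ inside the refined estimate and split on $|e^{ih}-1|$, whereas the paper first uses $\omega(|h|)\le\frac{\pi}{2}\,\omega(|e^{ih}-1|)$ and then splits on $|h|$. This is why you obtain the constant $4\pi$. Contrary to your guess, the paper's $2\pi^2$ comes from that factor $\frac{\pi}{2}$ multiplying the trivial bound $4\pi$, not from a cruder count of the jump set.
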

\begin{proof}

 For every $h\in[-\pi,\pi]$, by calculations, we know that 
 $$|h|\leq\frac{\pi}{2}|e^{ih}-1| .$$
Recall that $\omega(t)$ is increasing on $[0,\infty)$ and $\omega(t)/t$ is non-increasing on $(0,\infty)$.
Then for  every $h\in[-\pi,\pi]$, we obtain 
 $$
\omega(|h|)\leq \omega\left(\frac{\pi}{2}|e^{ih}-1 |\right)
\leq
\frac{\pi}{2}\omega(|e^{ih}-1|).
$$
This  shows that
\begin{equation}\label{ceq-4.11}
 \sup_{0<|h|\leq\pi}
 \frac{\|\Delta_h \Phi_N\|_{L^1( \mathbb{T})}}
 {\omega(|e^{ih}-1|)}
 \leq  \sup_{0<|h|\leq\pi}
 \frac{ \pi\|\Delta_h \Phi_N\|_{L^1( \mathbb{T})}}
 {2\omega(|h|)}.
\end{equation}

In the following, we will divide  into three subcases to estimate the quantity $\|\Delta_h\Phi_N\|_{L^1(\mathbb{T})}$.\smallskip

\noindent {\bf Subcase 2.1.}
Suppose that $h\in[-\pi,- \frac{1}{N}]\cup[ \frac{1}{N},\pi]$.\medskip

In this case, obviously,
\begin{align}\label{ceq-4.12} 
	\|\Delta_h\Phi_N\|_{L^1(\mathbb{T})}
 =\int_{0} ^{2\pi} \left |\Phi_N(e^{i(\theta+h)})-\Phi_N(e^{i\theta} )   \right|d\theta \leq 4\pi.
\end{align}

\noindent {\bf Subcase 2.2.}
Suppose that $h\in(0,\frac{1}{N})$.\medskip

 Observe that
$\Phi_N(e^{i(2\pi+\theta)})=\Phi_N(e^{i \theta})$  for any $\theta \in \mathbb{R}$ 
and the jump points of the mapping $\theta\mapsto \Phi_N(e^{i \theta})$ in $[0,2\pi]$ are 
$\theta_j=\frac{j\pi}{N}$,
where 
$j=0,1,\ldots ,2N$.
Let 
$$
\mathbf{E}_{h}
=\left\{\theta \in \left(-\frac{ \pi}{N},\frac{ 2N-1 }{N}\pi\right]:\Phi_N(e^{i( \theta+h)})\neq\Phi_N(e^{i \theta} ) \right\} 
$$
and
$$
\mathbf{E}^{c}_{h}
=\left\{\theta \in \left(-\frac{ \pi}{N},\frac{ 2N-1 }{N}\pi\right]:\Phi_N(e^{i( \theta+h)})=\Phi_N(e^{i \theta}) \right\} .
$$

 For every $\theta \in \mathbf{E}_{h}$,
we deduce from the facts $h\in(0,\frac{1}{N})$ and $[\theta,\theta+h] \subset (-\frac{\pi}{N}, 2\pi)$
 that there   exists  exactly one $\theta_{j_{0}}$ with $j_{0}\in \{0,1,\ldots,2N-1\}$ 
such that $\theta_{j_{0}} \in[\theta ,\theta +h]$.
This implies that  
$$
\theta\in[\theta_{j_{0}}-h,\theta_{j_{0}}].
$$
Since there are at most $2N $ such intervals  contained in  $\left[-\frac{\pi}{N}, \frac{2N-1}{N}\pi\right]$,
 we have
\begin{align}\label{ceq-4.13} 
\ell(\mathbf{E}_{h})\leq 2Nh,
\end{align}
where $\ell(\mathbf{E}_{h})$ denotes the length of $\mathbf{E}_{h}$.

By \eqref{ceq-4.13}  and the fact that
 $|\Phi_N(e^{i(\theta+h)})-\Phi_N(e^{i\theta} )|\leq 2$ for any $\theta\in \mathbb{R}$,
we find that
\begin{align}\label{ceq-4.14} 
	\|\Delta_h\Phi_N\|_{L^1(\mathbb{T})}
&=\int_{  \mathbf{E}_{h}} |\Phi_N(e^{i(\theta+h)})-\Phi_N(e^{i \theta } )|d\theta+\int_{  \mathbf{E}^{c}_{h}} |\Phi_N(e^{i(\theta+h)})-\Phi_N(e^{i \theta })|d\theta \\\nonumber
&=\int_{  \mathbf{E}_{h}} |\Phi_N(e^{i(\theta+h)})-\Phi_N(e^{i \theta })|d\theta \\\nonumber
&\leq 4Nh.
\end{align}
 
\noindent {\bf Subcase 2.3.}
Suppose that $h\in(-\frac{1}{N},0)$.\medskip

Let 
$$
\mathbf{F}_{h}=\big\{\theta \in [0,2\pi):\Phi_N(e^{i( \theta+h)})\not=\Phi_N(e^{i \theta}) \big\} 
$$
and
$$
\mathbf{F}^{c}_{h}=\big\{\theta \in [0,2\pi):\Phi_N(e^{i( \theta+h)})=\Phi_N(e^{i \theta}) \big\} .
$$

 For every $\theta \in \mathbf{F}_{h}$, 
the facts $h\in(-\frac{1}{N},0)$ and $[\theta+h,\theta] \subset (-\frac{\pi}{N}, 2\pi)$
ensure that there   exists  exactly one $\theta_{k_{0}}$ 
 with $k_{0}\in \{0,1,\ldots,2N-1\}$ 
such that $\theta_{k_{0}} \in[\theta+h ,\theta  ]$, 
 which implies that  
$$
\theta\in[\theta_{k_{0}},\theta_{k_{0}}-h].
$$
Observe that there are at most $2N $ such intervals    contained in  $[0,2\pi]$.
Then $\ell(\mathbf{F}_{h})\leq 2N|h|$.
By  the similar arguments as in \eqref{ceq-4.14} , we get that 
\begin{align}\label{ceq-4.15} 
	\left \|\Delta_h\Phi_N  \right\|_{L^1(\mathbb{T})}
 =\int_{  \mathbf{F}_{h}} |\Phi_N(e^{i(\theta+h)})-\Phi_N(e^{i\theta} )|d\theta 
 \leq 4N|h|.
\end{align}
 
\smallskip
 Now, we are prepared to prove \eqref{ceq-4.10}.
On one hand, we infer from \eqref{ceq-4.14}, \eqref{ceq-4.15} 
and the non-increasing property of the mapping $\omega(t)/t$ that 
\begin{equation*} 
 \sup_{0<|h|<1/N}
 \frac{ \pi\|\Delta_h \Phi_N\|_{L^1(\mathbb{T})}}
 {2\omega(|h|)}
  \leq  \sup_{0<|h|<1/N}
 \frac{ \pi\|\Delta_h \Phi_N\|_{L^1(\mathbb{T})}}
 {2N|h|\omega(1/N)}
 \leq \frac{2 \pi }
 { \omega(1/N)} .
\end{equation*}
On the other hand, by \eqref{ceq-4.12}  and the increasing property of the mapping $\omega$, we have that
\begin{equation*} 
  \sup_{1/N\leq |h|\leq\pi}
 \frac{ \pi\|\Delta_h \Phi_N\|_{L^1(\mathbb{T})}}
 {2\omega(|h|)}
  \leq  
 \frac{ 2\pi^{2} }
 { \omega(1/N)}.
\end{equation*}
These, together with \eqref{ceq-1.4} and \eqref{ceq-4.11}, show    that 
\begin{equation*} 
 \|\Phi_N\|_{\Lambda_{\omega,1}(\mathbb{T}),s}
= \sup_{0<|h|\leq\pi}
 \frac{\|\Delta_h \Phi_N\|_{L^1(\mathbb{T})}}
 {\omega(|e^{ih}-1|)}
  \leq  \sup_{0<|h|\leq\pi}
 \frac{ \pi\|\Delta_h \Phi_N\|_{L^1(\mathbb{T})}}
 {2\omega(|h|)}
 \leq \frac{2 \pi^{2} }
 { \omega(1/N)},
\end{equation*}
which completes the proof.
\end{proof}

By Lemmas \ref{clem-4.2} and \ref{clem-4.3}, we get the following estimate.

\begin{lem}\label{clem-4.4}
Suppose that $\omega$ is a majorant and    $(b_{1})$ holds  for $p=1$.
Then  for every $N\in\mathbb{N}$ and every $r\in(0,1/2)$,
	\begin{equation*}
	 \left  \|\Phi_N-\mathcal{P}_{1-r}[\Phi_N]  \right\|_{L^1( \mathbb{T})}
	\leq
	 C_{6} \omega(r)\frac{2\pi^{2}+ \omega(1)}{\omega(1/N) },
\end{equation*}
where $ C_{6}$ is the constant from Lemma \ref{clem-4.2} and  $\Phi_N $
is the mapping defined in \eqref{ceq-4.9}.

\end{lem}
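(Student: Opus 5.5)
The obstacle is that $\Phi_N$ is discontinuous, so $\Phi_N\notin X_\omega(\mathbb{T})$ and Lemma \ref{clem-4.2} cannot be applied to it directly. I would apply Lemma \ref{clem-4.2} to the smooth approximations $f_s:=\mathcal{P}_s[\Phi_N]$, $s\in(0,1)$, bound their $X_\omega(\mathbb{T})$-norms uniformly in $s$ using Lemma \ref{clem-4.3}, and then let $s\to1^-$.

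\emph{Step 1: membership and a uniform norm bound.} Since $\Phi_N\in L^1(\mathbb{T})$, the series \eqref{P_r[f]-Fourier} shows that $f_s$ is a trigonometric series with geometrically decaying coefficients, hence continuous on $\mathbb{T}$.
\begin{itemize}
\item Sup-norm: by \eqref{ceq-1.5} and positivity of the Poisson kernel, $\|f_s\|_{\max}\le \|\Phi_N\|_{L^\infty}\le1$.
\item Seminorm: $\mathcal{P}_s$ is a convolution, so it commutes with translations, giving $\Delta_h f_s=\mathcal{P}_s[\Delta_h\Phi_N]$. Lemma \ref{lem-E} with $p=1$ then yields $\|\Delta_h f_s\|_{L^1(\mathbb{T})}\le\|\Delta_h\Phi_N\|_{L^1(\mathbb{T})}$ for every $h$.
\end{itemize}
By \eqref{ceq-1.4} and Lemma \ref{clem-4.3}, $\|f_s\|_{\Lambda_{\omega,1}(\mathbb{T}),s}\le 2\pi^2/\omega(1/N)$. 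In particular $f_s\in X_\omega(\mathbb{T})$. Since $1/N\le1$ and $\omega$ is increasing, $1\le\omega(1)/\omega(1/N)$. Therefore
$$\|f_s\|_{X_\omega(\mathbb{T})}\le \frac{2\pi^2+\omega(1)}{\omega(1/N)}.$$

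\emph{Step 2: apply Lemma \ref{clem-4.2}.} Since $(b_1)$ holds for $p=1$, for every $r\in(0,1/2)$ and $s\in(0,1)$,
$$\|f_s-\mathcal{P}_{1-r}[f_s]\|_{L^1(\mathbb{T})}\le C_6\,\omega(r)\,\frac{2\pi^2+\omega(1)}{\omega(1/N)}.$$

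\emph{Step 3: let $s\to1^-$.} By the commutativity of convolutions, $\mathcal{P}_{1-r}[f_s]=\mathcal{P}_s[\mathcal{P}_{1-r}[\Phi_N]]$. The standard approximate-identity property of the Poisson kernel gives $\mathcal{P}_s[g]\to g$ in $L^1(\mathbb{T})$ for every $g\in L^1(\mathbb{T})$. Applying this with $g=\Phi_N$ and with $g=\mathcal{P}_{1-r}[\Phi_N]$, the left side of the Step 2 inequality converges to $\|\Phi_N-\mathcal{P}_{1-r}[\Phi_N]\|_{L^1(\mathbb{T})}$, while the right side does not depend on $s$. This gives the claim.

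The only point requiring care is the approximation step, in particular that the $\Lambda_{\omega,1}$-seminorm does not increase under $\mathcal{P}_s$. This follows from translation invariance together with Lemma \ref{lem-E}.
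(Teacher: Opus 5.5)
Your proposal is correct and follows essentially the same route as the paper. You apply Lemma \ref{clem-4.2} to $\mathcal{P}_s[\Phi_N]$, bound its $X_\omega(\mathbb{T})$-norm by $(2\pi^2+\omega(1))/\omega(1/N)$ via $\Delta_h\mathcal{P}_s[\Phi_N]=\mathcal{P}_s[\Delta_h\Phi_N]$, Lemma \ref{lem-E} and Lemma \ref{clem-4.3}, and let $s\to1^-$. The only difference is in the limit step: the paper uses a three-term triangle inequality with the semigroup identity $\mathcal{P}_{1-r}[\mathcal{P}_s[\Phi_N]]=\mathcal{P}_{s(1-r)}[\Phi_N]$ and the $L^1$-contraction, whereas you use commutativity of the convolutions and the approximate-identity property applied twice, and both are valid.
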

\begin{proof}
For every  $N\in\mathbb{N}$, $r\in(0,1/2)$ and $\sigma \in(0,1)$,
obviously,
\begin{align}\label{ceq-4.16}
 \left \|\Phi_N-\mathcal{P}_{1-r}[\Phi_N]  \right\|_{L^1( \mathbb{T})} 
\leq&  \left  \| \Phi_N -\mathcal{P}_\sigma[\Phi_N] \right \|_{L^1( \mathbb{T})}
+  \left  \|\mathcal{P}_\sigma[\Phi_N]-\mathcal{P}_{\sigma(1-r)}[\Phi_N] \right\|_{L^1( \mathbb{T})}\\\nonumber
	 &+ \left  \|\mathcal{P}_{\sigma(1-r)}[\Phi_N]-\mathcal{P}_{1-r}[\Phi_N]  \right\|_{L^1( \mathbb{T})}. 
\end{align}
By the semigroup property of the Poisson kernel (cf. \cite[p. 5]{liliu}), 
we know that for every $\sigma ,\tau\in(0,1)$ and $\theta \in \mathbb{R}$,
\begin{align*}
 \mathcal{P}_{\tau}[\mathcal{P}_\sigma[\Phi_N]] (e^{i\theta})
=
\mathcal{P}_{\tau\sigma }[\Phi_N] (e^{i\theta}).
\end{align*} 
Then we infer from Lemma \ref{lem-E} that
\begin{align}\label{ceq-4.17}
 	 \left  \|\mathcal{P}_{\sigma(1-r)}[\Phi_N]-\mathcal{P}_{1-r}[\Phi_N]  \right \|_{L^1( \mathbb{T})}
	&=
	 \left \|\mathcal{P}_{1-r}\big [ \mathcal{P}_\sigma[\Phi_N]-\Phi_N \big] \right   \|_{L^1( \mathbb{T})}\\\nonumber
	&\leq \left \|\mathcal{P}_{\sigma}[\Phi_N]-  \Phi_N  \right \|_{L^1( \mathbb{T})} 
\end{align}
whenever $\sigma \in(0,1)$ and $r\in(0,1/2)$.
Notice that  \cite[p. 114]{ABR92} guarantees that 
\begin{align}\label{ceq-4.18}
\lim_{\sigma\to1^{-} }    \left  \| \mathcal{P}_\sigma[\Phi_N]-\Phi_N  \right\|_{L^1( \mathbb{T})}=0.
\end{align}
Then  for every  $r\in(0,1/2)$, it follows from \eqref{ceq-4.17} and  \eqref{ceq-4.18} that
\begin{align}\label{ceq-4.19}
\lim_{\sigma\rightarrow 1^{-}}   \left \|\mathcal{P}_{\sigma(1-r)}[\Phi_N]-\mathcal{P}_{1-r}[\Phi_N]   \right \|_{L^1( \mathbb{T})}=0.
\end{align} 
Let $\sigma\to1^{-}$ in   \eqref{ceq-4.16}.
We obtain from \eqref{ceq-4.18} and \eqref{ceq-4.19} that for every $r\in(0,1/2)$,
\begin{align}\label{ceq-4.20}
  \left  \|\Phi_N-\mathcal{P}_{1-r}[\Phi_N]   \right\|_{L^1( \mathbb{T})} 
 \leq \liminf_{\sigma   \rightarrow 1^{-}}  \left \|\mathcal{P}_\sigma[\Phi_N]-\mathcal{P}_{\sigma(1-r)}[\Phi_N]  \right \|_{L^1( \mathbb{T})}.
\end{align}

In the following,  
we shall  employ Lemmas \ref{clem-4.2} and \ref{clem-4.3} to estimate the quantity
 $\big\|\mathcal{P}_\sigma[\Phi_N]-\mathcal{P}_{\sigma(1-r)}[\Phi_N] \big\|_{L^1( \mathbb{T})}$
 in terms of $\omega$ as formulated in \eqref{ceq-4.25}.
 For this,   we need to show that $\mathcal{P}_{\sigma}[\Phi_N]\in X_\omega(\mathbb{T})$.
For every $\sigma\in (0,1)$, $N\in \mathbb{N}$ and $\theta\in \mathbb{R}$, since
 $$
 \mathcal{P}_\sigma[\Phi_N](e^{i\theta})=\mathcal{P}[\Phi_N](\sigma e^{i\theta}),
 $$ 
 we see that $\mathcal{P}_\sigma[\Phi_N]\in \mathcal{C}(\mathbb{T})$.
 
 Observe that for every  $\theta,h\in  \mathbb{R}$,  
\begin{align*}
	\qquad\Delta_h(\mathcal{P}_\sigma[\Phi_N])(e^{i\theta})
	&=
	\mathcal{P}_\sigma[\Phi_N](e^{i(\theta+h)})-\mathcal{P}_\sigma[\Phi_N](e^{i\theta})\;\;\;\qquad\qquad\qquad\qquad\text{(by   \eqref{ceq-1.3})}\\
	&=
	\frac{1}{2\pi}
	\int_{0}^{2\pi}
	\mathcal{P}_\sigma(\eta)
	\bigl(\Phi_N(e^{i(  \theta+h-\eta)})-\Phi_N(e^{i( \theta-\eta )})\bigr)\,d\eta \qquad\quad \;\text{(by   \eqref{ceq-3.1})}\\
	&=
	\frac{1}{2\pi}
	\int_{0}^{2\pi}
	\mathcal{P}_\sigma(\eta)\Delta_h \Phi_N(e^{i( \theta-\eta )})\,d\eta\qquad \qquad\qquad\qquad\quad\text{(by   \eqref{ceq-1.3})}\\
	&=
	\mathcal{P}_\sigma[\Delta_h \Phi_N](e^{i\theta}).\qquad\qquad\qquad\qquad\qquad\qquad\;\;\qquad\;\text{(by   \eqref{ceq-3.1})}
\end{align*}
Hence, it follows from  Lemma  \ref{lem-E}  that
\begin{align}\label{ceq-4.21}
	\left\|\Delta_h\big (\mathcal{P}_\sigma[\Phi_N] \big)\right\|_{L^1(\mathbb{T})}
	=
	\left\|\mathcal{P}_\sigma[\Delta_h\Phi_N]\right\|_{L^1(\mathbb{T})}
	\leq
	\left\|\Delta_h\Phi_N  \right\|_{L^1(\mathbb{T})}.
\end{align}
Combining \eqref{ceq-1.4} and  \eqref{ceq-4.21} yields that
\begin{align}\label{ceq-4.22}
 \|\mathcal{P}_\sigma[\Phi_N]\|_{\Lambda_{\omega,1}(\mathbb{T}),s}
 =
  \sup_{0<|h|\leq\pi}
 \frac{\big\|\Delta_h\big (\mathcal{P}_\sigma[\Phi_N] \big)\big\|_{L^1(\mathbb{T})}}
 {\omega(|e^{ih}-1|)} 
 \leq  \sup_{0<|h|\leq\pi}
 \frac{\|\Delta_h\Phi_N\|_{L^1(\mathbb{T})}}
 {\omega(|e^{ih}-1|)} .
\end{align}
Notice that  \eqref{ceq-1.4} and Lemma  \ref{clem-4.3} guarantee   
\begin{align}\label{ceq-4.23}
 \sup_{0<|h|\leq\pi}
 \frac{\|\Delta_h\Phi_N\|_{L^1(\mathbb{T})}}
 {\omega(|e^{ih}-1|)}
=\| \Phi_N\|_{\Lambda_{\omega,1}(\mathbb{T}),s}
	\leq \frac{2\pi^{2}}{\omega(1/N)} .
\end{align}
Substituting  \eqref{ceq-4.23} into \eqref{ceq-4.22} gives that 
\begin{align}\label{ceq-4.24}
 \|\mathcal{P}_\sigma[\Phi_N]\|_{\Lambda_{\omega,1}( \mathbb{T}),s}
 \leq \frac{2\pi^{2}}{\omega(1/N)} .
 \end{align}
Further,  Lemma \ref{lem-E} and the increasing property of $\omega$ give  that  
\begin{align*}
 \|\mathcal{P}_\sigma[\Phi_N]\|_{\max}
 =\|\mathcal{P}_\sigma[\Phi_N]\|_{L^{\infty}(\mathbb{T})}
\leq
 \|\Phi_N\|_{L^{\infty}(\mathbb{T})}
=
1
\leq
\frac{\omega(1)}{\omega(1/N)}.
 \end{align*}
This, together with  \eqref{ceq-4.1} and \eqref{ceq-4.24}, gives that
$\mathcal{P}_\sigma[\Phi_N]\in X_\omega(\mathbb{T})$ and
\begin{equation*}
\|\mathcal{P}_\sigma[\Phi_N]\|_{X_\omega(\mathbb{T})}
	\leq
\frac{2\pi^{2}+ \omega(1)}{\omega(1/N)}.
\end{equation*}
By replacing $f$ with $\mathcal{P}_\sigma[\Phi_N]$ in Lemma \ref{clem-4.2}, we find that
for every $N\in \mathbb{N} $,  $r\in(0,1/2)$ and $\sigma \in (0,1)$,
\begin{align*}
\left \|\mathcal{P}_\sigma[\Phi_N]-\mathcal{P}_{1-r}\big[\mathcal{P}_\sigma[\Phi_N]\big] \right \|_{L^1( \mathbb{T})}
&\leq C_{6} \omega(r)  \left\|\mathcal{P}_\sigma[\Phi_N]  \right\|_{X_\omega(\mathbb{T})}\\\nonumber
&\leq  C_{6} \omega(r)\frac{2\pi^{2}+ \omega(1)}{\omega(1/N) },
	\end{align*}
where  $C_{6} $  is as in  Lemma \ref{clem-4.2}.
 Recall that $\mathcal{P}_{1-r}\big[\mathcal{P}_\sigma[\Phi_N]\big]=\mathcal{P}_{\sigma(1-r)} [\Phi_N] $ on $\mathbb{T}$.
 Then we have that
 \begin{align}\label{ceq-4.25}
\left \|\mathcal{P}_\sigma[\Phi_N]-\mathcal{P}_{\sigma(1-r)} [\Phi_N]\right \|_{L^1( \mathbb{T})}
&\leq  C_{6} \omega(r)\frac{2\pi^{2}+ \omega(1)}{\omega(1/N) }.
	\end{align}

Now, it follows from  \eqref{ceq-4.20}  and \eqref{ceq-4.25} that for every $r\in(0,1/2)$,
\begin{align*} 
\left   \|\Phi_N-\mathcal{P}_{1-r}[\Phi_N]  \right \|_{L^1( \mathbb{T})} 
 \leq \liminf_{\sigma\to 1^{-}}  \left \|\mathcal{P}_\sigma[\Phi_N]-\mathcal{P}_{\sigma(1-r)} [\Phi_N]  \right  \|_{L^1( \mathbb{T})}
 \leq
	 C_{6} \omega(r)\frac{2\pi^{2}+ \omega(1)}{\omega(1/N) },
\end{align*}
as required.
\end{proof}

For every $N\in\mathbb{N}$, $\theta\in \mathbb{R}$ and $f\in L^{1}(\mathbb T)$, let
	$$
	f_{N}(e^{i\theta}) =f(e^{iN\theta}) .
		$$
 Then $f_{N}\in L^{1}(\mathbb T)$.
In the following, we investigate the relationship between the mappings $\mathcal{P}_{r}[f_{N}]$ and $\mathcal{P}_{r^{N}}[f]$.
\begin{lem}\label{clem-4.5}
For every $N\in\mathbb{N}$, $r\in(0,1)$, $ \theta\in\mathbb R$ and  $f\in L^{1}(\mathbb{T})$, 
	\begin{equation}\label{ceq-4.26}
		\mathcal{P}_{r}[f_{N}](e^{i\theta})
		=
		\mathcal{P}_{r^{N}}[f](e^{iN\theta}).
	\end{equation}

\end{lem}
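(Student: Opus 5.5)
The plan is to compare Fourier coefficients, using the series representation \eqref{P_r[f]-Fourier}. First I will compute the Fourier coefficients of $f_N$. For $k\in\mathbb{Z}$, I substitute $\theta\mapsto\theta/N$ in \eqref{ceq-2.1} and use the $2\pi$-periodicity of $\eta\mapsto f(e^{i\eta})$, splitting $[0,2\pi N]$ into $N$ periods. This should give
\begin{equation*}
\widehat{f_N}(k)=\frac{1}{2\pi N}\int_0^{2\pi N} f(e^{i\eta})e^{-ik\eta/N}\,d\eta=
\begin{cases}
\widehat f(m), & k=mN,\ m\in\mathbb{Z},\\
0, & N\nmid k .
\end{cases}
\end{equation*}
The vanishing case follows because the $N$ translates contribute the factor $\sum_{j=0}^{N-1}e^{-2\pi i jk/N}=0$.

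Next I will insert this into \eqref{P_r[f]-Fourier}, which holds for any $L^1$ function at $r<1$ since the coefficients are bounded and $r^{|k|}$ decays geometrically. This gives
\begin{equation*}
\mathcal{P}_r[f_N](e^{i\theta})=\sum_{m\in\mathbb{Z}}\widehat f(m)r^{N|m|}e^{imN\theta}=\sum_{m\in\mathbb{Z}}\widehat f(m)(r^N)^{|m|}e^{im(N\theta)}=\mathcal{P}_{r^N}[f](e^{iN\theta}).
\end{equation*}
The last equality is \eqref{P_r[f]-Fourier} again, applied with radius $r^N\in(0,1)$ at the point $e^{iN\theta}$. This is \eqref{ceq-4.26}.

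The only step needing care is justifying \eqref{P_r[f]-Fourier} pointwise for a general $f\in L^1(\mathbb T)$. I would verify it directly: expand $\mathcal{P}_r(\eta)=\sum_k r^{|k|}e^{ik\eta}$, which converges uniformly in $\eta$, in \eqref{ceq-3.1}, and interchange sum and integral by uniform convergence against the integrable function $f$. The rearrangement in the second display is harmless because of absolute convergence. Alternatively, one could avoid Fourier series entirely: write $\mathcal{P}_r[f_N](e^{i\theta})=\frac{1}{2\pi}\int_0^{2\pi}\mathcal{P}(re^{i\theta},e^{i\eta})f(e^{iN\eta})\,d\eta$, and use the identity $\frac1N\sum_{j=0}^{N-1}\mathcal{P}_r\big(\theta-\frac{\eta+2\pi j}{N}\big)=\mathcal{P}_{r^N}(N\theta-\eta)$. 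That identity is itself the coefficient computation above, so I would take the Fourier route.
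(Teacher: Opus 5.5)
Your proof is correct and follows the paper's route: the same computation of $\widehat{f_N}(k)$ via the $N$ translates and the root-of-unity sum, fed into the Poisson series \eqref{P_r[f]-Fourier}. The only difference is at the end. You evaluate the absolutely convergent series directly at $e^{iN\theta}$, whereas the paper also computes the Fourier coefficients of $\theta\mapsto\mathcal{P}_{r^N}[f](e^{iN\theta})$ and concludes via the uniqueness theorem and continuity, so your ending is slightly more direct.
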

\begin{proof}
In order to prove the lemma, first, we need to express the Fourier coefficients of $ f_{N} $ and  $ \mathcal{ P}_r[f_{N}] $ 
in terms of those of $ f $ as formulated in \eqref{ceq-4.27} and \eqref{ceq-4.29} below.

On the one hand, for  $k\in\mathbb{Z}$,  we infer from \eqref{ceq-2.1} that
 \begin{align*}
 \widehat{f_{N}}(k)
 &=\frac{1}{2\pi}\int_{0}^{2\pi} f(e^{iN \vartheta })e^{-ik \vartheta }\,d \vartheta \\
& = \frac{1}{2\pi N} \int_{0}^{2\pi N} f(e^{i\theta})e^{-ik\theta/N}\,d\theta\\
  &= \frac{1}{2\pi N}
 \sum_{j=0}^{N-1}
 \int_{2\pi j}^{2\pi(j+1)}
 f(e^{i\theta})e^{-ik\theta/N}\,d\theta,
 \end{align*}
where
\begin{align*} 
\int_{2\pi j}^{2\pi(j+1)}
 f(e^{i\theta})e^{-ik\theta/N}\,d\theta
 &=\int_{0}^{2\pi}
 f(e^{i(\theta+2\pi j)})e^{-ik(\theta+2\pi j)/N}\,d\theta\\
& =\int_{0}^{2\pi}
			f(e^{i\theta})e^{-ik(\theta+2\pi j)/N}\,d\theta  
 \end{align*}
 for every $j\in\{0,1,\ldots,N-1\}$.
Then it follows that
		\begin{align*}
			\widehat{f_{N}}(k)
			&=
			\frac{1}{2\pi N}
			\sum_{j=0}^{N-1}
			\int_{0}^{2\pi}
			f(e^{i\theta})e^{-ik(\theta+2\pi j)/N}\,d\theta\\
			&=
			\frac{1}{N}
			\left(
			\sum_{j=0}^{N-1}e^{-2\pi i kj/N}
			\right)
			\frac{1}{2\pi}
			\int_{0}^{2\pi}
			f(e^{i\theta})e^{-ik \theta/N}\,d\theta.
		\end{align*}
	Notice that
		$$
		\sum_{j=0}^{N-1}e^{-2\pi i kj/N}
		=
		\begin{cases}
			N, & \text{if }N\mid k,\\
			0, & \text{if }N\not\mid k,
		\end{cases}
		$$
where $N\mid k$ means that  $ \frac{k}{N}$ is an integer.
Then
		\begin{equation}\label{ceq-4.27}
			\widehat{f_{N}}(k)
			=
			\begin{cases}
				\widehat f(m), & \text{if $k=mN$ for some $m\in \mathbb{Z}$},\\
				0, & \text{if }N\not\mid k 
			\end{cases}
		\end{equation}
(cf. \cite[p. 7]{KY2004}).
		
On the other hand, for every $N\in\mathbb{ N}$, $r\in(0,1)$ and $\theta \in \mathbb{R}$, 
by  \eqref{P_r[f]-Fourier} and the fact  that
 $f\in L^{1}(\mathbb{T})$,
  we know that
 \begin{equation}\label{ceq-4.28}
\mathcal{ P}_r[f_{N}](e^{i \theta})=\sum_{k=-\infty}^{\infty} r^{|k|}\widehat{f_{N}}(k) e^{ik\theta}.   
	\end{equation}
This, together with \eqref{ceq-4.27}, implies that  for every $k\in \mathbb{Z}$,
		 		\begin{equation}\label{ceq-4.29}
			\widehat{\mathcal{ P}_r[f_{N}]}(k)
			=
			\begin{cases}
				(r^{N})^{|m|}\widehat f(m), & \text{if $k=mN$ and $m\in \mathbb{Z}$},\\
				0, & \text{if }N\not\mid k.
			\end{cases}
		\end{equation}
		
In the following, we will   exploit \eqref{ceq-4.27} and \eqref{ceq-4.29} to prove \eqref{ceq-4.26}.
For this, we let
$$
		h_{N,r,1}(e^{i \theta}) = \mathcal{P}_{r^{N}}[f](e^{i \theta}) \;\text{and}\;
h_{N,r,2}(e^{i \theta})= \mathcal{P}_{r^{N}}[f](e^{iN \theta})  ,
		$$
where $N\in \mathbb{N}$, $r\in(0,1)$ and $\theta\in \mathbb{R}$. 
Obviously,  $h_{N,r,1},h_{N,r,2} \in \mathcal{C}(\mathbb{T})$.
Replace $f $ and $f_{N}$ with $h_{N,r,1}$
 and   $h_{N,r,2}$ in  \eqref{ceq-4.27}, respectively.
Then we obtain that for every $k\in \mathbb{Z}$,
		\begin{equation*}
			\widehat{h_{N,r,2}}(k)
			=
			\begin{cases}
				\widehat{h_{N,r,1}}(m), &\text{if $k=mN$ for some  $m\in \mathbb{Z}$},\\
				0, & \text{if }N\not\mid k.
			\end{cases}
		\end{equation*}
Since \eqref{ceq-4.28} for $N=1$ implies that 
		$$
		\widehat{h_{N,r,1}}(m)=\widehat{\mathcal{P}_{r^{N}}[f]}(m)
		=
		(r^{N})^{|m|}\widehat{f}(m) ,
		$$ 
we see that
		\begin{equation}\label{ceq-4.30}
		\widehat{h_{N,r,2}}(k)
			=
			\begin{cases}
				(r^{N})^{|m|}\widehat f(m), &\text{if $k=mN$ for some  $m\in \mathbb{Z}$},\\
				0, & \text{if }N\not\mid k.
			\end{cases}
		\end{equation}
	Comparing \eqref{ceq-4.29} and
		\eqref{ceq-4.30}, we see that  for every $k\in \mathbb{Z}$,
		$$
	\widehat{\mathcal{ P}_r[f_{N}]}(k)=	\widehat{h_{N,r,2}}(k).
		$$
Then the uniqueness theorem of Fourier series (cf. \cite[p. 12]{KY2004})	and the continuity of the mappings $\mathcal{ P}_r[f_{N}]$ and $h_{N,r,2}$ 
imply that for every $\theta\in \mathbb{R}$,
$$ 
\mathcal{P}_{r}[f_{N}](e^{i\theta}) =h_{N,r,2}(e^{i\theta})= \mathcal{P}_{r^{N}}[f](e^{i  N\theta })  .
$$
 This proves \eqref{ceq-4.26}.
\end{proof}

By Lemma \ref{clem-4.5}, we obtain the following result.

\begin{lem}\label{clem-4.6}
For     $r\in(0,1/2)$ and $N_{r}\in \big(0,\log_{1-r}\frac{1}{2} \big) \cap \mathbb{N}$,
we have
	\begin{equation}\label{ceq-4.31}
		\left\|\Phi_{N_{r}}-\mathcal{P}_{1-r}[\Phi_{N_{r}}] \right\|_{L^1( \mathbb{T})}
		\geq
		\frac{ r N_{r}}{5\pi}\log\frac{1}{2r N_{r}},
	\end{equation}
where $\Phi_{N_{r}}$ is the mapping defined in \eqref{ceq-4.9}.
\end{lem}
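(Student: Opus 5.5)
The plan is to use Lemma \ref{clem-4.5} to collapse the $N_r$-fold oscillation into a single square wave at a smaller radius. That radius will be $\rho:=(1-r)^{N_r}$, and we then compute $\Phi_1-\mathcal{P}_\rho[\Phi_1]$ explicitly.

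\textbf{Step 1 (reduction to $N=1$).} Since $\Phi_{N}(e^{i\theta})=\Phi_1(e^{iN\theta})$, i.e.\ $\Phi_N=(\Phi_1)_N$ in the notation preceding Lemma \ref{clem-4.5}, formula \eqref{ceq-4.26} gives
\[
\Phi_{N_r}(e^{i\theta})-\mathcal{P}_{1-r}[\Phi_{N_r}](e^{i\theta})=g(e^{iN_r\theta}),
\qquad g:=\Phi_1-\mathcal{P}_{\rho}[\Phi_1].
\]
The substitution $\theta\mapsto N_r\theta$, together with $2\pi$-periodicity, shows that $\int_0^{2\pi}|g(e^{iN\theta})|\,d\theta=\int_0^{2\pi}|g(e^{i\theta})|\,d\theta$. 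Hence the left side of \eqref{ceq-4.31} equals $\|\Phi_1-\mathcal{P}_\rho[\Phi_1]\|_{L^1(\mathbb{T})}$.

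\textbf{Step 2 (comparing $1-\rho$ with $rN_r$).} The hypothesis $N_r<\log_{1-r}\frac12$ means $\rho>\frac12$, so $\rho\in(\frac12,1)$. Bernoulli's inequality gives $1-\rho\le rN_r$. In the other direction, $\rho\le e^{-rN_r}$, and $rN_r\le N_r\log\frac{1}{1-r}<\log 2<1$, so $1-\rho\ge 1-e^{-rN_r}\ge rN_r/2$. Thus $1-\rho$ is comparable to $rN_r$ with explicit constants, and $1/(1-\rho)\ge 1/(rN_r)$.

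\textbf{Step 3 (explicit Poisson integral of the square wave).} The harmonic extension of $\operatorname{sgn}(\sin\theta)$ is $\frac{2}{\pi}\arctan\frac{2y}{1-|z|^2}$; this can be checked directly, or via the Fourier series $\frac4\pi\sum_{k\ \mathrm{odd}}\frac{\sin k\theta}{k}$ and \eqref{P_r[f]-Fourier}. Therefore, for $\theta\in(0,\pi)$,
\[
1-\mathcal{P}_\rho[\Phi_1](e^{i\theta})=\frac{2}{\pi}\arctan\frac{1-\rho^2}{2\rho\sin\theta},
\]
and by oddness the same expression (in absolute value) holds on $(\pi,2\pi)$. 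So $\|g\|_{L^1}=\frac{4}{\pi}\int_0^\pi\arctan\frac{\varepsilon}{\sin\theta}\,d\theta$ with $\varepsilon:=\frac{1-\rho^2}{2\rho}$. Note that $\varepsilon\ge\frac{1-\rho^2}{2}\ge\frac{1-\rho}{2}$, and $\varepsilon<\frac34$ because $\rho>\frac12$.

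\textbf{Step 4 (lower bound and constants).} Restrict the integral to the set where $\sin\theta\ge\varepsilon$. There $\arctan x\ge\frac{\pi}{4}x$ for $x\in[0,1]$. Using $\sin\theta\le\theta$ on $[\arcsin\varepsilon,\pi/2]$ and symmetry about $\pi/2$, we get
\[
\|g\|_{L^1}\ \ge\ \frac{4}{\pi}\cdot\frac{\pi}{4}\cdot 2\varepsilon\int_{\arcsin\varepsilon}^{\pi/2}\frac{d\theta}{\theta}\ \gtrsim\ \varepsilon\log\frac{1}{\varepsilon}.
\]
The function $x\log\frac1x$ is increasing for small $x$. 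Combining this with Step 2, i.e.\ $\varepsilon\ge(1-\rho)/2\ge rN_r/4$ and $\varepsilon\le rN_r\cdot\frac{1+\rho}{2\rho}$, should yield the bound $\frac{rN_r}{5\pi}\log\frac{1}{2rN_r}$. The claim is trivial when $2rN_r\ge1$, since the right side is then $\le 0$.

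The main obstacle is Step 4's constant bookkeeping. The monotonicity of $x\log(1/x)$ only holds for $x<e^{-1}$, so we must pass from $\varepsilon$ down to a lower quantity like $rN_r/4$ while keeping the argument of the logarithm no worse than $1/(2rN_r)$. The range where $rN_r$ is close to $\frac12$ may need a separate crude estimate, for instance $\|g\|_{L^1}\ge c$ directly. I would first try to work with $\log\frac{\pi}{2\arcsin\varepsilon}$ and $\arcsin\varepsilon\le\frac{\pi}{2}\varepsilon$ to keep the constants clean.
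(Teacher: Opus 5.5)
Your argument is correct. Steps 1 and 2 coincide with the paper's: the reduction $\|\Phi_N-\mathcal{P}_\rho[\Phi_N]\|_{L^1(\mathbb{T})}=\|\Phi_1-\mathcal{P}_{\rho^N}[\Phi_1]\|_{L^1(\mathbb{T})}$ via Lemma \ref{clem-4.5}, and $\frac{rN_r}{2}\le 1-(1-r)^{N_r}\le rN_r$ via Bernoulli and $e^{-x}$.

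You diverge in the one-circle estimate. The paper never computes $\mathcal{P}_\rho[\Phi_1]$. It uses that $\Phi_1-\mathcal{P}_\rho[\Phi_1]$ has constant sign on each half-circle to write the norm as $\frac{2}{\pi}\int_0^\pi\int_{-\pi}^0\mathcal{P}_\rho(\theta-\eta)\,d\eta\,d\theta$, i.e.\ the Poisson mass the lower half-circle puts on points of the upper one. It then bounds the kernel below by $\frac{1-\rho}{5\theta^2}$ on the triangle $1-\rho<\theta<\frac12$, $-\theta<\eta<0$ next to the jump at $0$, which gives $\frac{2(1-\rho)}{5\pi}\log\frac{1}{2(1-\rho)}$.

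You instead use the closed form $\frac{2}{\pi}\arg\frac{1+z}{1-z}$ of the harmonic extension. This turns the norm into the exact integral $\frac4\pi\int_0^\pi\arctan\frac{\varepsilon}{\sin\theta}\,d\theta$ and yields the stronger bound $2\varepsilon\log\frac1\varepsilon$. Your route buys an exact identity and a better constant. The paper's route is more robust: it uses only positivity and a local lower bound of the kernel near a jump, so it would apply unchanged to other step functions or kernels with no closed form.

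The ``main obstacle'' you flag is not one, because no monotonicity of $x\log\frac1x$ is needed. Put $s=rN_r$. You already have $\frac s4\le\varepsilon<\frac32 s$ and $\varepsilon<\frac34$, so $\log\frac1\varepsilon>0$. If $2s\ge 1$, the right side of \eqref{ceq-4.31} is $\le 0$ and there is nothing to prove. If $2s<1$, bound the two factors separately:
\[
2\varepsilon\log\frac1\varepsilon\ \ge\ \frac s2\log\frac{2}{3s}\ \ge\ \frac s2\log\frac{1}{2s}\ \ge\ \frac{s}{5\pi}\log\frac{1}{2s}.
\]
So no separate treatment of $s$ near $\frac12$ is required. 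In fact $\frac{1+\rho}{2\rho}>1$ gives the sharper $\varepsilon>1-\rho\ge\frac s2$, but you do not need it.
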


\begin{proof}
By \eqref{ceq-4.9}, we know that for every $N\in  \mathbb{N}$ and $\vartheta\in \mathbb{R}$,
\begin{equation*}
	\Phi_N(e^{i\vartheta})=\text{sgn}(\sin (N\vartheta))=\Phi_1(e^{iN\vartheta}).
\end{equation*}
Since  $\Phi_1\in L^{1}(\mathbb T)$, we deduce from Lemma \ref{clem-4.5}  that
 for  every $\rho\in(0,1)$ and $ \vartheta\in\mathbb R$,
$$
\mathcal{P}_{\rho}[\Phi_N](e^{i\vartheta})
=
\mathcal{P}_{\rho^N}[\Phi_1](e^{iN\vartheta}).
$$
From  the above equalities, we get  
\begin{align}\label{ceq-4.32}
\left\|\Phi_N- \mathcal{P}_{\rho}[\Phi_N] \right\|_{L^1( \mathbb{T})}
&=	\int_0^{2\pi}	\left|\Phi_1(e^{iN\vartheta})-	\mathcal{P}_{\rho^N}[\Phi_1](e^{iN\vartheta})	\right|\,d\vartheta\\\nonumber
&=\frac{1}{N}	\int_0^{2N\pi}	\left|\Phi_1(e^{i\theta})-\mathcal{P}_{\rho^N}[\Phi_1](e^{i\theta})\right|\,d\theta.
	\end{align}
Observe that for  every $\theta\in \mathbb{R}$,
\begin{align}\label{ceq-4.33}
\Phi_1(e^{i(\theta+2\pi)})-\mathcal{P}_{\rho^N}[\Phi_1](e^{i(\theta+2\pi)})
=\Phi_1(e^{i\theta})-\mathcal{P}_{\rho^N}[\Phi_1](e^{i\theta}).
\end{align}
Then  \eqref{ceq-4.32} and \eqref{ceq-4.33} imply that
\begin{equation}\label{ceq-4.34}
\left \|\Phi_N-\mathcal{P}_{\rho}[\Phi_N] \right \|_{L^1( \mathbb{T})}
=
\left \|\Phi_1-\mathcal{P}_{\rho^N}[\Phi_1]   \right\|_{L^1( \mathbb{T})},
	\end{equation}
where $\rho\in (0,1)$ and $N\in \mathbb{N}$.

To continue the proof, we still need the following claim.
\smallskip

\noindent {\bf Claim 2.1.}
 For any $\rho\in(1 /2,1)$,
	\begin{align*} 
		\left\|\Phi_1-\mathcal{P}_\rho[\Phi_1] \right\|_{L^1(\mathbb{T})}
		\geq
		 \frac{2(1-\rho)}{ 5\pi}\log\frac{1}{2(1-\rho)}.
\end{align*}
 
Indeed, it follows from \eqref{ceq-1.5}, \eqref{ceq-3.1} and \eqref{ceq-4.9} that  for  every $\rho\in(1/2,1)$, 
 \begin{align}\label{ceq-4.35}
\left \|  \Phi_1-\mathcal{P}_\rho[\Phi_1]  \right\|_{L^1(\mathbb{T})} 
   = &\int_{-\pi}^{\pi}\left| \frac{1}{2\pi} \int_{-\pi}^{\pi}\frac{1-\rho^{2}}{|1-\rho e^{i(\theta-\eta)}|^{2}}
 \big( \Phi_{1}(e^{i \theta }) - \Phi_{1}(e^{i \eta})\big) d\eta\right|d \theta\\\nonumber
=&\int_{0}^{\pi}\left( \frac{1}{2\pi} \int_{-\pi}^{\pi}\frac{1-\rho^{2}}{|1-\rho e^{i(\theta-\eta)}|^{2}}
 \big( 1 - \Phi_{1}(e^{i\eta })\big) d\eta \right)d \theta\\\nonumber
 &+  \int_{-\pi}^{0}\left( \frac{1}{2\pi} \int_{-\pi}^{\pi}\frac{1-\rho^{2}}{|1-\rho e^{i(\theta-\eta)}|^{2}}
 \big(1+ \Phi_{1}(e^{i \eta })\big) d\eta\right)d \theta .
 \end{align}
  Since  \eqref{ceq-4.9} guarantees that for  every $\theta\in \mathbb{R}$,
	\begin{align*}
		 \frac1{2\pi}
		\int_{-\pi}^{ \pi}
		\frac{1-\rho^{2}}{|1-\rho e^{i(\theta-\eta)}|^{2}}
		\big(1- \Phi_{1}(e^{i\eta})\big)\,d\eta
		 =
		\frac{1}{\pi}
		\int_{-\pi}^{0}
		\frac{1-\rho^{2}}{|1-\rho e^{i(\theta-\eta)}|^{2}}\,d\eta
	\end{align*}
and
	\begin{align*}
		 \frac1{2\pi}
		\int_{-\pi}^{ \pi}
		\frac{1-\rho^{2}}{|1-\rho e^{i(\theta-\eta)}|^{2}}
	\big(1+ \Phi_{1}(e^{i \eta })\big)\,d\eta
		 =
		\frac{1}{\pi}
		\int_{0}^{\pi}
		\frac{1-\rho^{2}}{|1-\rho e^{i(\theta-\eta)}|^{2}}\,d\eta,
	\end{align*}
we deduce from \eqref{ceq-4.35} that
\begin{align}\label{ceq-4.36}
  \big\|   \Phi_1-\mathcal{P}_\rho[\Phi_1]  \big\|_{L^1(\mathbb{T})}   
  =& \frac{1}{\pi} \int_{0 }^{\pi}\int_{-\pi}^{0}
 \frac{1-\rho^{2}}{|1-\rho e^{i(\theta-\eta)}|^{2}}\,d\eta d\theta  +\frac{1}{\pi}
\int_{-\pi}^{0} \int_{0}^{\pi}
 \frac{1-\rho^{2}}{|1-\rho e^{i(\theta-\eta)}|^{2}}\,d\eta d\theta  \\\nonumber
= &
 \frac{2}{ \pi}
\int_{0}^{\pi}\int_{-\pi}^{0}
\frac{1-\rho^{2}}{|1-\rho e^{i(\theta-\eta)}|^{2}}\,d\eta\,d\theta .
\end{align}
	
Observe that for  every $\rho\in(1/2,1)$, $\theta \in (1-\rho,1/2)\subset[0,\pi] $  and 
$\eta\in(-\theta,0)\subset[-\pi,0]$,  
 $$
 \theta-\eta\in(\theta,2\theta) 
\;\;\text{and} \;\; 1-\rho\in  (0,\theta)\subset  (0,1/2).
 $$
Then 
$$
4\rho\sin^2 \frac{ \theta-\eta}{2} \leq ( \theta-\eta)^2\leq 4\theta^2,
$$ 
and so,
$$
\frac{1-\rho^{2}}{|1-\rho e^{i(\theta-\eta)}|^{2}}
	=
	\frac{1-\rho^2}
	{(1-\rho)^2+4\rho\sin^2  \frac{ \theta-\eta}{2}  }\geq \frac{1-\rho }{(1-\rho)^{2}+4\theta^2}\geq \frac{1-\rho}{5\theta^{2}}.
$$
 Substituting this estimate into \eqref{ceq-4.36} gives that 
	\begin{align*}
		\left\|\Phi_1-\mathcal{P}_\rho[\Phi_1]\right\|_{L^1(\mathbb{T})}
		\geq
		 \frac{2(1-\rho)}{ 5\pi}
		\int_{1-\rho}^{1/2}
		\int_{-\theta}^{0}
		\frac1{\theta^2}\,d\eta\,d\theta
	 =
		 \frac{2(1-\rho)}{ 5\pi}\log\frac{1}{2(1-\rho)}.
\end{align*}
Hence, Claim 2.1 holds true.

\medskip

By \eqref{ceq-4.34} and Claim 2.1, we find that
 for  every $\rho\in(1/2,1)$ and $N_{\rho}\in \big (0,\log_{\rho}\frac{1}{2} \big) \cap \mathbb{N}$,
 \begin{align}\label{ceq-4.37}
\left\|\Phi_{N_{\rho}}-\mathcal{P}_{\rho}[\Phi_{N_{\rho}}] \right \|_{L^1( \mathbb{T})}
 = \left \|\Phi_1-\mathcal{P}_{\rho^{N_{\rho}}}[\Phi_1]  \right\|_{L^1( \mathbb{T})}
\geq \frac{2(1-\rho^{N_{\rho}})}{ 5\pi}\log\frac{1 }{2(1-\rho^{N_{\rho}})}.
 \end{align}
Set $\rho=1-r$. Then $r\in (0,1/2)$.
Obviously, in order to prove \eqref{ceq-4.31}, we still need to obtain the
upper and lower bounds of the quantity of $1-(1-r)^{N_{r}}$ in terms of $rN_{r}$, where 
 $N_{r}\in \big(0,\log_{1-r}\frac{1}{2} \big)\cap \mathbb{N} $.

 By Bernoulli's inequality  (cf. \cite[p. 9]{caro})  and  calculations, we know that for  every $r\in (0,1/2)$ and $N\in \mathbb{N}$,
 \begin{equation}\label{ceq-4.38}
 (1-r)^{N}\geq 1-r N
 \;\;\text{and}\;\;
 (1-r)^{N }\leq e^{-rN} .
 \end{equation}
Further, since $1-e^{-x}-\frac{x}{2} >0$ when $ x\in (0,\log 2)$ and
$$
0<rN_{r}<r\log_{1-r}\frac{1}{2}=\frac{-r\log 2}{\log(1-r)}< \log2,
$$
we obtain that
	\begin{equation}\label{ceq-4.39}
 e^{-rN_{r}}\leq 1-\frac{rN_{r}}{2}.
	\end{equation}
Then we deduce from \eqref{ceq-4.38} and \eqref{ceq-4.39} that
for every  $r\in(0,1/2)$ and $N_{r}\in\big (0,\log_{1-r}\frac{1}{2} \big) \cap \mathbb{N}$,
	\begin{equation}\label{ceq-4.40}
 \frac{rN_{r} }{2}
\leq 1-e^{-rN_{r}}
 \leq1-(1-r)^{N_{r}}
 \leq rN_{r}.
	\end{equation}
	
Now, the combination of \eqref{ceq-4.37} and \eqref{ceq-4.40} implies that
for every    $r\in(0,1/2)$ and $N_{r}\in \mathbb{N}  \cap \big (0,\log_{1-r}\frac{1}{2}  \big)$,
 \begin{align*} 
\left \|\Phi_{N_{r}}-\mathcal{P}_{1-r}[\Phi_{N_{r}}] \right \|_{L^1 (\mathbb{T})}
 \geq \frac{2(1-(1-r)^{N_{r}})}{ 5\pi}\log\frac{1 }{2(1-(1-r)^{N_{r}})}
 \geq  \frac{r{N_{r}} }{ 5\pi}\log\frac{1 }{2r{N_{r}}  }.
 \end{align*}
 This proves \eqref{ceq-4.31}, and, so the proof is
complete.
\end{proof}

We now proceed with the proof of $(c_{2})$ for the case $p=1$.
For convenience, we let 
\begin{equation}\label{ceq-4.401}
Q(t)=\frac{\omega(t)}{t}
\end{equation}
 on $(0,\infty)$.
In the following,
we shall divide the proof into two steps. 
In the first step, we show that there exists  a constant  $A\in(1,\infty)$  
such that for every $j\in \mathbb{N}$ and $r\in(0,A^{-j})$,
\begin{equation}\label{ceq-4.41}
	Q(A^{j}r) \leq
 2^{ -j}Q(r).
\end{equation}
In the second step, 
by applying the   existence of $A$ to estimate the upper bound of 
$ \int_{\delta}^{\pi}	\frac{Q(t)}{t } dt$, 
we finish  the proof of the theorem.

\medskip

\noindent {\bf Step 2.1.}
The   existence of $A$.
 
\smallskip
 
By Lemmas \ref{clem-4.4} and  \ref{clem-4.6}, we find that 
for every  $r\in(0,1/2)$ and $N_{r}\in \big(0,\log_{1-r}\frac{1}{2}\big) \cap \mathbb{N}$,
 \begin{equation*}  
	\frac{rN_{r}}{5\pi}\log\frac{1}{2r N_{r} }
\leq \left\|\Phi_{N_{r}}-\mathcal{P}_{1-r}[\Phi_{N_{r}}] \right\|_{L^1(\mathbb{ T})}
	\leq
	 C_{6} \omega(r)\frac{2\pi^2+ \omega(1)}{\omega(1/N_{r} )},
\end{equation*}
 where $ C_{6}=C_{6}( \omega)$ is the constant from Lemma \ref{clem-4.2}.
This  implies that
for every   $r\in(0,1/2)$ and $N_{r}\in \big(0,\log_{1-r}\frac{1}{2}\big) \cap \mathbb{N}$,
\begin{equation}\label{ceq-4.42}
	Q(1/N_{r})
	\log\frac{1}{2rN_{r}}
	\leq
	C_{8}Q(r) ,
\end{equation}
where $C_{8}=5\pi C_{6}\big(2\pi^2+ \omega(1)\big)>0$.

Let 
\begin{equation}\label{ceq-4.43}
 A=4e^{2C_{8}}+1.
\end{equation} 
For every  $\rho\in (0, \frac{1}{A})$,
from the inequality 
$$
\frac{2}{A\rho}-\frac{1}{A\rho} >1,
$$
we find  that there must exist a positive integer $n_{\rho} \in\left[\frac1{A\rho},\,\frac2{A\rho}\right]$,
which implies that
\begin{equation}\label{ceq-4.44}
\log\frac{1}{2\rho n_{\rho} }\geq\log\frac{  A}{4}> 2C_{8}>0
\;\;\text{and}\;\;
Q(A\rho)\leq Q(1/n_{\rho}).
\end{equation}

Notice that $\frac{1}{\rho}\log(1-\rho)$ is strictly decreasing in $(0,\frac{1}{A}]$ and 
$\log \big(1- \frac{1}{A} \big)>\frac{1}{2} \log \frac{1}{2}$.
Then for   $\rho\in (0, \frac{1}{A})$, we have
$$
\frac{\log(1-\rho)}{\rho}>A\log\left(1-\frac{1}{A}\right)>\frac{A}{2}\log \frac{1}{2}.
$$
This yields that
$$
\frac{2\log(1-\rho)}{A\rho}>\log \frac{1}{2},
$$
and so,
\begin{equation}\label{ceq-4.45}
 n_{\rho} \in\left[\frac1{A\rho},\,\frac2{A\rho}\right]  \subset  \left(0,\log_{1-\rho}\frac{1}{2} \right) .
\end{equation} 

By \eqref{ceq-4.42}, \eqref{ceq-4.44} and \eqref{ceq-4.45},
we see that for every  $\rho\in (0, \frac{1}{A})\subset (0,\frac{1}{2})$ 
and  $n_{\rho} \in\left[\frac1{A\rho}, \frac{2}{A\rho}\right] \subset \left(0,\log_{1-\rho}\frac{1}{2} \right)$,
\begin{align*}
	Q(A\rho)
	\leq
 Q(1/n_{\rho})\leq
	\frac{C_{8}}{\log\frac{1}{2\rho n_{\rho}}}Q(\rho)
	<
	\frac{  Q(\rho)}{2}.
\end{align*}
This implies that  for every  $j\in \mathbb{N}$ and $r\in(0,A^{-j})$,
\begin{equation*}
	Q(A^j r)\leq Q(A^{j-1}r)/2\leq \cdots\leq  2^{1-j}Q(A r) 
	\leq
	2^{-j}Q(r).
\end{equation*}
Hence  the constant  $A$ defined in \eqref{ceq-4.43} satisfies \eqref{ceq-4.41}, and
so this $A$ is what we need.

\medskip
 \noindent {\bf Step 2.2.}
The   estimate of $ \int_{\delta}^{\pi}	\frac{Q(t)}{t }\,dt$ with $\delta\in(0,\pi]$.
\smallskip

In the following,   we divide the proof into two subcases to estimate the quantity
 $ \int_{\delta}^{\pi}	\frac{Q(t)}{t } dt$.\smallskip

\noindent {\bf Subcase 2.4.}
Suppose that $\delta\in[ \frac{1}{2A},\pi]$.\medskip

In this case,   we infer from the non-increasing property of $\omega(t)/t$ and \eqref{ceq-4.401} that
\begin{equation}\label{ceq-4.46}
 \int_{\delta}^{\pi}
\frac{Q(t)}{t }\,dt
\leq Q(\delta)\int_{\delta}^{\pi}
\frac{1}{t }\,dt
\leq
Q(\delta)\int_{  \frac{1}{2A} }^\pi
\frac{1}{t}\,dt
=Q(\delta) \log(2A\pi).
\end{equation}

\smallskip

\noindent {\bf Subcase 2.5.}
Suppose that $\delta\in(0,\frac{1}{2A})$.\medskip

In this case, 
we will estimate  the quantities $\int_{\delta}^{\frac{1}{2A}}\frac{Q(t)}{t}dt$ 
and $\int_{\frac{1}{2A}}^{\pi}\frac{Q(t)}{t}dt$, respectively.
 
\noindent
$(\mathfrak{A})$ {\bf Estimate on   $\int_{\delta}^{ \frac{1}{2A}}\frac{Q(t)}{t}dt$.}
Let $\delta\in(0, \frac{1}{2A})$ and choose an integer $m_{\delta}\in \mathbb{N} $ such that
\begin{equation}\label{ceq-4.47}
A^{m_{\delta}-1}\delta\leq   \frac{1}{2A}<A^{m_{\delta}}\delta.
\end{equation}
  Then
$$
\int_{\delta}^{\frac{1}{2A}}\frac{Q(t)}{t}dt
\leq
\sum_{k=0}^{m_{\delta}-1}
\int_{A^{k}\delta}^{A^{k+1}\delta}
\frac{Q(t)}{t}dt
\leq
\sum_{k=0}^{m_{\delta}-1}
Q(A^{k}\delta)
	\int_{A^{k}\delta}^{A^{k+1}\delta}
	\frac{dt}{t}
=\log A\sum_{k=0}^{m_{\delta}-1}Q(A^{k}\delta).
$$
Since \eqref{ceq-4.47} implies that
$\delta\in\big (0,\frac{1}{2A^{m_{\delta}}} \big] \subset  \big (0,\frac{1}{A^{k}} \big )$ for every $k\in\{ 0,1,\ldots,m_{\delta}-1\}$, 
 we infer from \eqref{ceq-4.41} and the above inequalities that 
  \begin{align}\label{ceq-4.48}
\int_{\delta}^{\frac{1}{2A}}\frac{Q(t)}{t}dt
\leq\log A \sum_{k=0}^{m_{\delta}-1}Q(A^{k}\delta)
\leq Q(\delta)\log A\sum_{k=0}^{m_{\delta}-1}2^{-k}
\leq2 Q(\delta) \log A.
\end{align}

\noindent
$(\mathfrak{B})$ {\bf Estimate on   $\int_{ \frac{1}{2A}}^{\pi}\frac{Q(t)}{t}dt$.}
For   $\delta\in \big (0,\frac{1}{2A} \big)$ and $t\in\big [\frac{1}{2A},\pi\big]$,
obviously,
$$
Q(t)\leq Q\left(\frac{1}{2A}\right)\leq Q(\delta).
$$
Then 
 \begin{equation}\label{ceq-4.49}
 \int_{\frac{1}{2A}}^{\pi} \frac{Q(t)}{t}\,dt
	\leq Q(\delta)\int_{\frac{1}{2A}}^{\pi} \frac{1}{t}\,dt=Q(\delta)\log(2A\pi).
\end{equation}

It follows from \eqref{ceq-4.48} and \eqref{ceq-4.49}  that for   $\delta\in(0, \frac{1}{2A})$,
 \begin{equation}\label{ceq-4.50}
	 \int_{\delta}^{\pi} \frac{Q(t)}{t}\,dt
= \int_{\delta}^{\frac{1}{2A}}\frac{Q(t)}{t}dt+\int_{\frac{1}{2A}}^{\pi} \frac{Q(t)}{t}\,dt
	\leq \big( 2 \log A +\log(2A\pi) \big)Q(\delta).
\end{equation}
Combining \eqref{ceq-4.50} with  \eqref{ceq-4.46} shows that  
\begin{equation*} 
 \int_{\delta}^\pi \frac{Q(t)}{t }\,dt 
\leq  \big(2 \log A+ \log (2A\pi)\big)Q(\delta),  \quad\delta\in(0,\pi].
\end{equation*}
This 
implies that  $(b_2)$ also holds for $p=1$. 
The proof of this theorem is complete.
 \qed

\bigskip
{\bf Data Availability} Our manuscript has no associated data.

{\bf Conflict of interest} The authors declare that they have no conflict of interest.

\bigskip

{\rm $\mathbf{Acknowledgments:}$}	
The second  author was partly supported by
the Scientific Research Fund of Hunan Provincial Education Department (grant no. 25A0086) 
 and the construct program of the key discipline in Hunan Province.
 The first author was partly supported by the
National Science Foundation of China (grant no. 12571080).
The third author was partly supported by JSPS KAKENHI (grant no. JP22K03363).
The  fourth author was partly supported by the National Science Foundation of China
	(grant nos. 12371071 and  12571081).

\end{document}